\DeclareSymbolFont{bbold}{U}{bbold}{m}{n}
\DeclareSymbolFontAlphabet{\mathbbold}{bbold}
\newcommand{\pto}[1]{\text{\tiny$(#1)$}}
\begin{document}


\title{Existence and Uniqueness of Exact WKB Solutions \\ for Second-Order Singularly Perturbed Linear ODEs}

\author{Nikita Nikolaev}

\affil{\small School of Mathematics and Statistics, University of Sheffield, United Kingdom}

\date{UPDATED VERSION: \today}

\maketitle
\thispagestyle{frontpage}

\begin{abstract}
We prove an existence and uniqueness theorem for exact WKB solutions of general singularly perturbed linear second-order ODEs in the complex domain.
These include the one-dimensional time-independent complex Schrödinger equation.
Notably, our results are valid both in the case of generic WKB trajectories as well as closed WKB trajectories.
We also explain in what sense exact and formal WKB solutions form a basis.
As a corollary of the proof, we establish the Borel summability of formal WKB solutions for a large class of problems, and derive an explicit formula for the Borel transform.
\end{abstract}

{\small
\textbf{Keywords:}
exact WKB analysis, exact WKB method, linear ODEs, Schrödinger equation, singular perturbation theory, exact perturbation theory, Borel resummation, Borel-Laplace theory, asymptotic analysis, exponential asymptotics, Gevrey asymptotics, resurgence

\textbf{2020 MSC:} 
	\href{https://zbmath.org/classification/?q=cc%3A34M60}{34M60} (primary); 
	\href{https://zbmath.org/classification/?q=cc%3A34E10}{34E10},
	\href{https://zbmath.org/classification/?q=cc%3A34E20}{34E20},
	\href{https://zbmath.org/classification/?q=cc%3A40G10}{40G10},
	\href{https://zbmath.org/classification/?q=cc%3A34M03}{34M03},
	\href{https://zbmath.org/classification/?q=cc%3A34M25}{34M25}
}


\enlargethispage{-0.5cm}
{\begin{spacing}{0.9}
\small
\setcounter{tocdepth}{3}
\tableofcontents
\end{spacing}
}

\section{Introduction}

Consider a singularly perturbed $2^\text{nd}$-order linear ordinary differential equation
\eqntag{\label{210223212642}
	\hbar^2 \del_x^2 \psi + p \hbar \del_x \psi + q \psi = 0
\fullstop{,}
}
where $x$ is a complex variable, $\hbar$ is a small complex perturbation parameter, and the coefficients $p, q$ are holomorphic functions of $(x, \hbar)$ in some domain in $\Complex^2_{x\hbar}$.
The question we study is a quintessential problem in singular perturbation theory.
Namely, in this paper we search for solutions of \eqref{210223212642} that are holomorphic in both variables $x$ and $\hbar$ and admit well-defined asymptotics as $\hbar \to 0$ in a specified sector.

\paragraph{Results.}
The main result of this paper (\autoref{210116200501}) establishes precise general conditions for the existence and uniqueness of \textit{exact WKB solutions}.
These are holomorphic solutions that are canonically specified (in a precise sense via Borel resummation) by their exponential asymptotic expansions as $\hbar \to 0$ in a halfplane.
They are constructed by means of the Borel-Laplace method for the associated singularly perturbed Riccati equation which we investigated in \cite{MY2008.06492}.

Our approach yields a general result (\autoref{210221112347}) about the Borel summability of \textit{formal WKB solutions}.
These are exponential formal $\hbar$-power series solutions that assume the role of asymptotic expansions as $\hbar \to 0$ of exact WKB solutions within appropriate domains in $\Complex_x$.
We also prove an existence and uniqueness result (\autoref{210118113644}) for formal WKB solutions that in particular clarifies precisely in what sense they form a basis of formal solutions.

The construction of exact WKB solutions involves the geometry of certain real curves in $\Complex_x$ (the \textit{WKB trajectories}) traced out using a type of Liouville transformation.
Two special classes of this geometry (\textit{closed} and \textit{generic WKB trajectories}) are especially important because they appear in wide a variety of applications.
Notably, our existence and uniqueness and the Borel summability results remain valid in both of these situations.
Thus, we construct an exact WKB basis both along a closed WKB trajectory (\autoref{210519102448}) as well as a generic WKB trajectory (\autoref{210519103256}).

The explicit nature of our approach yields refined information about the Borel transform of WKB solutions.
This includes an explicit recursive formula (\autoref{210527135451}) which we hope will facilitate the analysis of the singularity structure in the Borel plane and perhaps lead to a fuller understanding of the resurgent properties of WKB solutions in a large class of problems (see \autoref{210622072950}).

\paragraph{Brief literature review.}
\label{210614233439}
The \textit{WKB approximation method} was established in the mathematical context by Jeffreys \cite{zbMATH02596027} and independently in the analysis of the Schrödinger equation in quantum mechanics by Wentzel \cite{zbMATH02589478}, Kramers \cite{zbMATH02589479}, and Brillouin \cite{zbMATH02589465}.
However, it has a very long history that goes further back to at least Carlini (1817), Liouville (1837), and Green (1837); for an in-depth historical overview, see for example the books of Heading \cite[Ch.I]{MR0148995}, Fröman and Fröman \cite[Ch.1]{MR0173481}, and Dingle \cite[Ch.XIII]{MR0499926}.
A clear exposition of the asymptotic theory of the WKB approximation can be found in the remarkable textbook of Bender and Orszag \cite[Part III]{MR1721985}.
The relationship between the asymptotic properties of the WKB approximation and the geometry of WKB trajectories was comprehensively examined by Evgrafov and Fedoryuk \cite{MR0209562, MR1295032}.

In the early 1980s, influenced by the earlier work of Balian and Bloch \cite{MR438937}, a groundbreaking advancement was made by Voros \cite{MR650542, MR729194} who lay the foundations for upgrading the WKB approximation method to an exact method, dubbed the \textit{exact WKB method}.
Although the value of considering the all-orders WKB expansions was suggested earlier by Dunham \cite{zbMATH03006786}, Bender and Wu \cite{MR0260323}, Dingle \cite{MR0499926}, and later by 't Hooft \cite{t1979can} in a purely physics context,
Voros was the first to introduce in a more systematic fashion techniques from the theory of Borel-Laplace transformations.
Crucial early contributions to the development of the general theory of exact WKB analysis for second-order linear ODEs include works of Leray \cite{MR103328}, Boutet de Monvel and Krée \cite{MR226170}, Silverstone \cite{MR819680}, Aoki, Sato, Kashiwara, Kawai, Takei, and Yoshida \cite{MR0420735, MR1166808, MR1256439, zbMATH00933375}, Delabaere, Dillinger, and Pham \cite{MR1209700, MR1483488, MR1704654, MR1770284}, Dunster, Lutz, and Schäfke \cite{MR1232828}, Écalle \cite{MR1296470}, and Koike \cite{zbMATH01511798, MR1770282, MR1753205}.
For a survey of early work in exact WKB analysis, we recommend the excellent book of Kawai and Takei \cite{MR2182990}, as well as the comprehensive review article by Voros \cite{MR2970524}.

Later developments focused mainly on understanding WKB-theoretic transformation series (first introduced in \cite{MR1166808}) that transform a given differential equation in a suitable neighbourhood of critical WKB trajectories (or \textit{Stokes lines}) to one in standard form whose WKB-theoretic properties are better understood.
A very partial list of contributions includes the works by Aoki, Kamimoto, Kawai, Koike, Sasaki, and Takei, \cite{MR2499553, kamimoto2011borel, MR3289681, MR3156844, sasaki2013borel, MR3209359}.
Parallel to this activity has been the classification of WKB geometry (or Stokes graphs), which includes the works by Aoki, Kawai, Takei, Tanda, \cite{MR1845214, takei2007exact, MR3156841, takei2017wkb}, as well as a detailed analysis of some WKB-theoretic properties of special classes of equations, which includes the works by Aoki, Kamimoto, Kawai, Koike, and Takei \cite{MR2827731, MR3050813, MR3408181, MR3209360, MR3343506, MR3569645, MR3617727, MR3523540, MR3929586}.

\enlargethispage{10pt}
However, although the existence of exact WKB solutions in classes of examples has been established, a general existence theorem for second-order linear ODEs has remained unavailable.
Contributions towards such a general theory include Gerard and Grigis \cite{MR929202}, Bodine, Dunster, Lutz, and Schäfke \cite{MR1232828, MR1854432, MR1914448}, Giller and Milczarski \cite{MR1809243}, Koike and Takei \cite{MR3156848}, Ferreira, López, and Sinusía \cite{MR3252852,MR3316977}, as well as most recently by Nemes \cite{MR4226390} whose preprint appeared at roughly the same time as our previous work \cite{MY2008.06492} that underpins our results here.
Our paper contributes to this long line of work by establishing a general theory of existence and uniqueness of exact WKB solutions, which generalises the relevant results from the aforementioned works (see \autoref{210721173245} for a discussion).

The need for a general existence result for exact WKB solutions of equations of the form \eqref{210223212642} is evident from a recent surge of scientific advances that rely upon it.
For example, this includes works in cluster algebras and character varieties \cite{MR3280000, kidwai2017spectral, MR3977870, kuwagaki2020sheaf, MR4155179}, stability conditions and Donaldson-Thomas invariants \cite{MR3853051, MR3935038, MR4205119}, high energy physics \cite{210612162633, MR3976899, MR4166626, MR4212092, grassi2021exact}, Gromov-Witten theory \cite{MR4029824}, as well as further developments in WKB analysis \cite{MR3343506, MR3523540, MR3929586}.
Some of the results in these references specifically rely on a statement of Borel summability of formal WKB solutions presented in \cite[Theorem 2.17]{MR3280000}.
This statement is drawn from an unpublished work of Koike and Schäfke on the Borel summability of WKB solutions of Schrödinger equations with polynomial potentials (see \cite[\S3.1]{takei2017wkb} for a brief account of Koike-Schäfke's ideas).
As explained in \autoref{210612121553}, this statement is a special case of our main theorem.
Therefore, our paper provides a rigorous proof of Koike-Schäfke's assertion.

\textbf{Acknowledgements.}
I want to expresses special gratitude to Marco Gualtieri, Marta Mazzocco, and Jörg Teschner for their encouragement to finish this project and write this paper.
I am thankful to Francis Bischoff, Marco Gualtieri, and Kento Osuga for very helpful suggestions for improving the draft of this paper.
I also thank André Voros for very useful comments on the first preprint version of this paper, especially regarding the literature review.
I benefited from discussions with Anton Alekseev, Dylan Allegretti, Francis Bischoff, Tom Bridgeland, Marco Gualtieri, Kohei Iwaki, Omar Kidwai, Andrew Neitzke, Gergő Nemes, Kento Osuga, and Shinji Sasaki.
This work was supported by the NCCR SwissMAP of the SNSF, as well as the EPSRC Programme Grant \textit{Enhancing RNG}.

\section{Setting}
\label{210223085256}

In this section, we describe our general setup, give a few examples, and define the notion of formal and exact solutions that are sought for in this paper.

\subsection{Background Assumptions}

\paragraph{}
\label{210527190843}
Fix a complex plane $\Complex_x$ with coordinate $x$ and another complex plane $\Complex_\hbar$ with coordinate $\hbar$.
Fix a domain $X \subset \Complex_x$ and a \hyperref[210217114252]{sectorial domain} $S \subset \Complex_\hbar$ at the origin with opening arc $A = (\vartheta_-, \vartheta_+) \subset \Real$ and opening angle $\pi \leq |A| \leq 2\pi$.
See \autoref{210618093403}.

We consider the following differential equation for a scalar function $\psi = \psi (x, \hbar)$:
\eqntag{\label{210115121038}
	\hbar^2 \del^2_x \psi + p \hbar \del_x \psi + q \psi = 0
\fullstop{,}
}
where $p,q$ are holomorphic functions of $(x,\hbar) \in X \times S$ which admit locally uniform \hyperref[210225135947]{Gevrey asymptotic expansions} $\hat{p}, \hat{q} \in \cal{O} (X) \bbrac{\hbar}$ with holomorphic coefficients along the closed arc $\bar{A}$:
\begin{equation}
\label{210115161312}
\begin{gathered}
	p (x, \hbar)
		\simeq \hat{p} (x, \hbar) \coleq \sum_{k=0}^\infty p_k (x) \hbar^k
\fullstop{,}
\\
	q (x, \hbar)
		\simeq \hat{q} (x, \hbar) \coleq \sum_{k=0}^\infty q_k (x) \hbar^k
\end{gathered}
\qqquad
\text{as $\hbar \to 0$ along $\bar{A}$, loc.unif. $\forall x \in X$\fullstop}
\end{equation}

\paragraph{}
Basic notions from asymptotic analysis as well as our notation and conventions are summarised in \autoref{210217113936}.
Explicitly, assumption \eqref{210115161312} for, say, $q$ means the following.
For every $x_0 \in X$, there is a neighbourhood $U_0 \subset X$ of $x_0$, a sectorial subdomain $S_0 \subset S$ with the \textit{same} opening $A$ (see \autoref{210618100236}), and constants $\CC, \MM > 0$ such that for all $n \geq 0$, all $x \in U_0$, and all $\hbar \in S_0$,
\eqntag{
	\left| q (x, \hbar) - \sum_{k=0}^{n-1} q_k (x) \hbar^k \right|
		\leq \CC \MM^n n! |\hbar|^n
\fullstop
}
(The sum for $n=0$ is empty.)
In particular, this means that each $q_k$ is bounded on $U_0$, and $q$ is bounded on $U_0 \times S_0$.
Let us also stress that \eqref{210115161312} is stronger than the usual notion of Gevrey asymptotics in that we require the above bounds to hold $\hbar \to 0$ uniformly in all directions within $S$ (see \autoref{210225134124}).
This stronger asymptotic assumption plays a crucial role in our analysis by allowing us to draw uniqueness conclusions with the help of a theorem of Nevanlinna \cite{nevanlinna1918theorie,MR558468} (see \autoref{210617120300}; see also \cite[Theorem B.11]{MY2008.06492} where we present a detailed proof).

\begin{figure}[t]
\centering
\begin{subfigure}[b]{0.45\textwidth}
    \centering
    \includegraphics{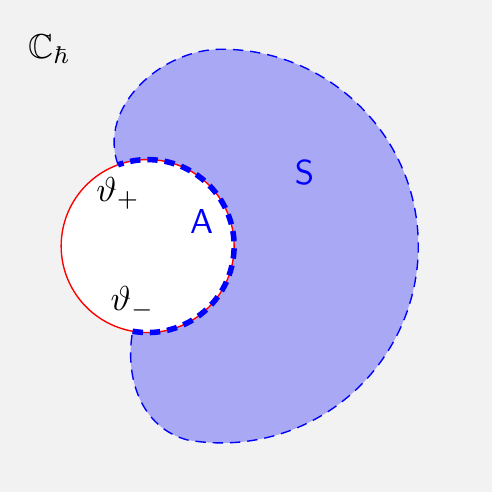}
    \caption{Sectorial domain $S$ with opening $A = (\vartheta_-, \vartheta_+)$.}
    \label{210618093403}
\end{subfigure}
\quad
\begin{subfigure}[b]{0.47\textwidth}
    \centering
    \includegraphics{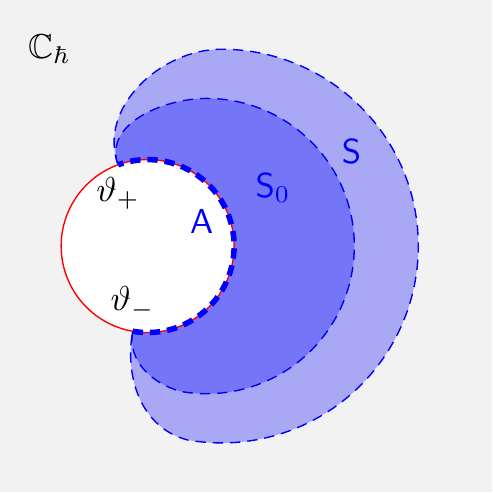}
    \caption{Sectorial subdomain $S_0$ with the same opening $A$.}
    \label{210618100236}
\end{subfigure}
\caption{}
\end{figure}

\paragraph{}
A typical way to ensure the asymptotic condition \eqref{210115161312} is to start with holomorphic functions $p,q$, defined for $\hbar$ in a strictly larger sectorial domain $\tilde{S} \supset S$ with a strictly larger opening $\tilde{A}$ such that $\bar{A} \subset \tilde{A}$, which admit Gevrey asymptotics as $\hbar \to 0$ along the open arc $\tilde{A}$.
Then the restrictions of $p,q$ to $S$ necessarily satisfy \eqref{210115161312}.

In particular, if $p,q$ are actually holomorphic at $\hbar = 0$, then the asymptotic condition \eqref{210115161312} is automatically satisfied.
In this case, the power series $\hat{p}, \hat{q}$ are nothing but the convergent Taylor series expansions in $\hbar$ of $p,q$ at $\hbar = 0$.

\subsection{Examples}

\begin{example}[Classical differential equations]{210603192752}
The simplest interesting situation is when the coefficients $p, q$ are polynomial functions of $x$ only.
In this case, $X = \Complex_x$ and $S$ is ordinarily taken to be the right halfplane $\set{ \Re (\hbar) > 0 }$ where one demands asymptotic control on solutions as $\hbar \to 0$.
The most famous example of this situation is the $\hbar$-Airy equation:
\eqntag{\label{210603204921}
	\hbar^2 \del_x^2 \psi - x \psi = 0
\fullstop
}
Another example is the $\hbar$-Weber equation (sometimes also known as the quantum harmonic oscillator):
\eqntag{\label{210603203902}
	\hbar^2 \del_x^2 \psi - \big( x^2 - a \big) \psi = 0
\fullstop{,}
}
where $a$ is any complex number.

More general examples are provided by famous classical differential equations (with $\del_x$ replaced by $\hbar \del_x$) for which the coefficients $p,q$ are rational functions of $x$ only.
This includes the Gauss hypergeometric equation, as well as Bessel, Heun, Hermite, and many others.
In these cases, $X$ is the complement of finitely many points in $\Complex_x$.
\end{example}

\begin{example}[The Mathieu equation]{210603192602}
All equations in the previous example extend to second-order equations on the Riemann sphere with a pole at infinity (beware, however, that this extension is not unique).
A famous example where this is not the case is the Mathieu equation, sometimes written as
\eqntag{
	\hbar^2 \del_x^2 \psi - 2\big( \cos (x) - \EE \, \big) \psi = 0
\fullstop{,}
}
where $\EE$ is a complex number.
This equation has an essential singularity at infinity, yet the methods in this paper are still directly applicable to this equation.
\end{example}

\begin{example}[Mildly deformed coefficients]{210603203536}
More generally, the coefficients $p, q$ can be polynomials in $\hbar$ with coefficients which are rational or more general meromorphic functions of $x$.
Such examples appear, in Hermitian matrix models of the Gaussian potential \cite[\S6.7, equation (6.86)]{MR3694097} and more generally in the study of quantum curves, where the following deformation of the $\hbar$-Weber equation \eqref{210603203902} is encountered:
\eqntag{
	\hbar^2 \del_x^2 \psi - \big( x^2 - 4 + 2\hbar \big) \psi = 0
\fullstop
\tag*{\qedhere}
}
\end{example}

\begin{example}[A nontrivially deformed $\hbar$-Airy equation]{210603204218}
Our methods are applicable to classical differential equations with much more sophisticated $\hbar$-dependence.
For example, let $X = \Complex^\ast_x$, $S = \set{ \Re(\hbar) > 0}$, and consider the following nontrivial deformation of the $\hbar$-Airy equation \eqref{210603204921}:
\eqntag{
	\hbar^2 \del^2_x \psi - \big( x + \EE (x, \hbar) \big) \psi = 0
\fullstop{,}
}
where
\eqntag{
	\EE (x, \hbar)
		\coleq \int_0^{+\infty} \frac{e^{- \xi/\hbar}}{x + \xi} \dd{\xi}
\fullstop
}
The function $\EE (x, \hbar)$ is not holomorphic at $\hbar = 0$, but one can verify that it admits locally uniform Gevrey asymptotics as $\hbar \to 0$ along the closed arc $\bar{A} = [-\tfrac{\pi}{2}, +\tfrac{\pi}{2}]$.
\end{example}

\begin{example}[\textbf{The Schrödinger equation}]{210527155308}
The most famous special class of equations \eqref{210115121038} is the complex one-dimensional time-independent Schrödinger equation
\eqntag{
\label{210115121042}
	\Big( \hbar^2 \del_x^2 - \QQ (x, \hbar) \Big) \phi (x, \hbar) = 0
\fullstop
}
In fact, any second-order equation \eqref{210115121038} can be put into the Schrödinger form \eqref{210115121042} by means of the following transformation of the unknown variable:
\eqntag{
\label{210116161607}
	\phi (x, \hbar) = \exp \left( \frac{1}{2 \hbar} \int\nolimits^x_{x_0} p (t, \hbar) \dd{t} \right) \psi (x, \hbar)
\fullstop{,}
}
where $x_0$ is a suitably chosen basepoint.
In terms of the coefficients of \eqref{210115121038}, the resulting potential is $\QQ = \tfrac{1}{4} p^2 + \tfrac{1}{2} \hbar \del_x p - q$.

However, in this paper we prefer not to use the transformation \eqref{210116161607} and instead continue to work with equation \eqref{210115121038}.
We have two main reasons for this preference.
Firstly, the transformation \eqref{210116161607} develops essential singularities wherever $p$ has singularities, which is typically on the boundary of $X$.
Secondly, and perhaps most importantly from the geometric point of view, the Schrödinger form \eqref{210115121042} is not a coordinate-independent expression unless this differential equation is posed not on functions but on \textit{sections} of a specific line bundle over a Riemann surface (the square-root anti-canonical bundle).
These details will be explained in \cite{MY210517181728}.
\end{example}

\subsection{Formal and Exact Solutions}

Our goal is to construct holomorphic solutions of \eqref{210115121038} with prescribed asymptotic behaviour as $\hbar \to 0$.
Because of the way our linear equation is perturbed (i.e., differentiation $\del_x$ is multiplied by a single power of $\hbar$), it turns out that the correct notion of asymptotics is \textit{exponential asymptotics}.
This notion is briefly recalled in \autoref{210220170857}.
Indeed, one can easily verify that nonzero holomorphic solutions generically cannot admit a usual power series asymptotic expansion at $\hbar = 0$.
The following definition gives the precise class of solutions we seek in this paper.

\begin{defn}{210218171155}
A \dfn{weakly-exact solution} of \eqref{210115121038} on an open subset $U \subset X$ is a holomorphic solution $\psi$ defined on $U \times S' \subset X \times S$ for some sectorial subdomain $S' \subset S$ with nonempty opening $A' \subset A$, which admits locally uniform exponential asymptotics as $\hbar \to 0$ along $A'$.
If $A' = A$, then we call $\psi$ a \dfn{strongly-exact solution} on $(U;A)$, or simply \dfn{exact solution}.
\end{defn}

\paragraph{}
Existence of weakly-exact solutions is a classical fact in the theory of differential equations (see, e.g., \cite[Theorem 26.2]{MR0460820}).
However, weakly-exact solutions constructed by usual methods are inherently non-unique and in general there is no control on the size of the opening $A'$ (see, e.g., the remark in \cite[p.144]{MR0460820}, immediately following Theorem 26.1).
Our attention in this paper is instead focused on the strongly-exact solutions, which from this point of view form a more restricted class of solutions.
\textit{A priori}, these may not exist even if weakly-exact solutions are abundant.
The problem of finding strongly-exact solutions is a nontrivial sharpening of the more classical problem of finding weakly-exact solutions.

In this paper, we will in fact construct a special kind of exact solutions, called \textit{exact WKB solutions}.
Their distinguishing property is that they are canonically associated (in a precise sense) with their exponential asymptotics.

\paragraph{The space of exact solutions.}
We denote by $\mathbb{S} (U \times S)$ the space of all holomorphic solutions of \eqref{210115121038} defined on the domain $U \times S$ (without any asymptotic restrictions).
Our differential equation is linear of order two, so $\mathbb{S} (U \times S)$ is a rank-two module over the ring $\cal{O} (S)$ of holomorphic functions on $S$.
More generally, we can consider the space $\mathbb{S} (U; A)$ of \hyperref[210226112200]{\textit{semisectorial germs}} (see \autoref{210226161539}) of holomorphic solutions defined on the pair $(U;A)$.
Again, $\mathbb{S} (U; A)$ is a rank-two module for the ring $\cal{O} (A)$ of \hyperref[210225113610]{sectorial germs} (without asymptotic restrictions).

Let $\mathbb{ES} (U; A) \subset \mathbb{S} (U; A)$ be the subset of \textit{exact} solutions on $(U;A)$ in the sense of \autoref{210218171155}.
This is a module over the ring $\cal{A}^{\exp} (A)$ of holomorphic sectorial germs that admit exponential asymptotics along $A$.
We also let $\mathbb{GES} (U; A) \subset \mathbb{ES} (U; A)$ be the subset of exact solutions on $(U;A)$ which admit locally uniform exponential Gevrey asymptotics along $A$.
Similarly, it is a module over the ring $\cal{G}^{\exp} (A)$ of holomorphic sectorial germs that admit exponential asymptotics along $A$.
These are the spaces in which we search for (and find!) solutions of \eqref{210115121038}.

\paragraph{Formal solutions.}\label{210611190127}
If $\psi$ is an exact solution of \eqref{210115121038}, its exponential asymptotic expansion $\hat{\psi}$ formally satisfies the asymptotic analogue of the differential equation \eqref{210115121038} in which the coefficients $p,q$ are replaced by their asymptotic power series $\hat{p}, \hat{q}$:
\eqntag{\label{210604102933}
	\hbar^2 \del^2_x \hat{\psi} + \hat{p} \: \hbar \del_x \hat{\psi} + \hat{q} \; \hat{\psi} = 0
\fullstop{,}
}

\enlargethispage{0.5cm}
\begin{defn}{210617221701}
An \dfn{exponential power series solution} on a domain $U \subset X$ is an \hyperref[210226085039]{exponential power series}
\eqntag{\label{210617221741}
	\hat{\psi} = e^{-\Phi / \hbar} \, \hat{\Psi} ~\in~ \cal{O}^{\exp} (U) \bbrac{\hbar}
\fullstop{,}
}
where $\Phi \in \cal{O} (U)  [\hbar^{-1}]$ and $\hat{\Psi} \in \cal{O} (U)  \bbrac{\hbar}$, that formally satisfies \eqref{210604102933} for all $x \in U$.
More generally, a \dfn{formal solution} on $U$ is an \hyperref[210226114421]{exponential transseries} $\hat{\psi} \in \cal{O}^{\exp} (U) \bbrac{\hbar}$ that formally satisfies \eqref{210604102933}; i.e., $\hat{\psi}$ is a finite combination of exponential power series:
\eqntag{\label{210323092636}
	\hat{\psi}
		= \sum_\alpha^{\textup{\tiny{finite}}} e^{-\Phi_\alpha / \hbar} 
				\, \hat{\Psi}_\alpha
				~\in~ \cal{O}^{\exp} (U) \bbrac{\hbar}
}
where $\Phi_\alpha \in \cal{O} (U)  [\hbar^{-1}]$ and $\hat{\Psi}_\alpha \in \cal{O} (U)  \bbrac{\hbar}$.
\end{defn}

A brief account of exponential power series and transseries can be found in \autoref{210220170857}.
Note also that we have introduced a negative sign in the exponent in \eqref{210617221741} for future convenience.

\paragraph{The space of formal solutions.}
We denote the subset of $\cal{O}^{\exp} (U) \bbrac{\hbar}$ consisting of all formal solutions on $U$ by $\hat{\mathbb{S}} (U) \bbrac{\hbar}$.
It is a module over the ring of exponential transseries $\Complex^{\exp} \bbrac{\hbar}$.
Evidently, the asymptotic expansion defines a map $\ae : \mathbb{ES} (U;A) \to \hat{\mathbb{S}} (U) \bbrac{\hbar}$, but it is not a homomorphism because of complicated dominance relations for exponential prefactors (see \autoref{210220170857} for a comment).

\section{Formal WKB Solutions}

In this section, we analyse the differential equation \eqref{210115121038} in a purely formal setting where we ignore all analytic questions with respect to $\hbar$.
As a notational mnemonic used throughout the paper, objects decorated with a hat are formal.

\paragraph{Formal setup.}
Thus, we consider a general formal second-order differential equation of the following form:
\eqntag{\label{210302151519}
	\hbar^2 \del^2_x \hat{\psi} + \hat{p} \: \hbar \del_x \hat{\psi} + \hat{q} \; \hat{\psi} = 0
\fullstop{,}
}
where the coefficients $\hat{p},\hat{q} \in \cal{O} (X) \bbrac{\hbar}$ are formal power series in $\hbar$ with holomorphic coefficients on $X$:
\eqntag{\label{210323091619}
	\hat{p} (x, \hbar) \coleq \sum_{k=0}^\infty p_k (x) \hbar^k
\qqtext{and}
	\hat{q} (x, \hbar) \coleq \sum_{k=0}^\infty q_k (x) \hbar^k
\fullstop
}
We search for formal solutions in the sense of \autoref{210617221701}.

\subsection{The Semiclassical Limit}

\paragraph{}
A standard technique in solving linear ODEs is to introduce the characteristic equation.
We consider only the \dfn{leading-order characteristic equation}, which in for the second-order equation at hand is the following quadratic equation with holomorphic coefficients $p_0, q_0 \in \cal{O} (X)$ for a holomorphic function $\lambda = \lambda (x)$:
\eqntag{\label{210415145506}
	\lambda^2 - p_0 \lambda + q_0 = 0
\fullstop
}
We refer to its discriminant as the \dfn{leading-order characteristic discriminant}:
\eqntag{\label{210115160703}
	\DD_0 \coleq p_0^2 - 4 q_0 ~\in \cal{O} (X)
}
We always assume that $\DD_0$ is not identically zero.
The zeros of $\DD_0$ are called \dfn{turning points}, and all other points in $X$ are called \dfn{regular points}.
If $x_0 \in X$ is a regular point, then \eqref{210415145506} has two distinct holomorphic solutions.
We call them the \dfn{leading-order characteristic roots}.
Upon fixing a local square-root branch $\sqrt{\DD_0}$ near $x_0$, we will always label them as follows:
\eqntag{
\label{210305095811}
	\lambda_\pm \coleq \frac{p_0 \pm \sqrt{\DD_0}}{2}
\qqtext{so that}
	\sqrt{\DD_0} = \lambda_+ - \lambda_-
\fullstop
}

\begin{example}[The Schrödinger equation]{210526062601}
For the Schrödinger equation \eqref{210115121042}, the leading-order characteristic discriminant is $\DD_0 = 4 \QQ_0$, and the leading-order characteristic roots are $\lambda_\pm = \pm \sqrt{\QQ_0}$.
\end{example}

\begin{rem}{210527180116}
In analysis, equation \eqref{210415145506} is often called the \dfn{semiclassical limit} of the second-order differential operator in \eqref{210115121038}.
It can be obtained as the vanishing constraint of the following limit:
\eqn{
	\lim_{\substack{\hbar \to 0 \\ \hbar \in S}}
		\left[ e^{+\frac{1}{\hbar} \int\nolimits^x \lambda (t) \dd{t}}
		\Big(
			\hbar^2 \del^2_x + p (x, \hbar) \hbar \del_x + q (x, \hbar)
		\Big)
			e^{-\frac{1}{\hbar} \int\nolimits^x \lambda (t) \dd{t}}
		\right]
	= 0
\fullstop
\tag*{\qedhere}
}	
\end{rem}

\subsection{Existence and Uniqueness}
\label{210528120631}

Existence of formal solutions of \eqref{210302151519} away from turning points is well-known.
For Schrödinger equations (i.e., with $\hat{p} = 0$), they are often called \textit{(formal) WKB solutions}.
However, uniqueness statements are not usually made completely explicit.
It is also sometimes said that `formal WKB solutions are linearly independent and form a basis', but the space they generate is again not usually made explicit.
The purpose of the following proposition is to make these statements precise and explicit.

\begin{prop}[\textbf{Existence and Uniqueness of Formal WKB Solutions}]{210118113644}
\leavevmode\newline
Let $x_0 \in X$ be a regular point.
If $U \subset X$ is any simply connected neighbourhood of $x_0$ free of turning points, then \eqref{210302151519} has precisely two nonzero exponential power series solutions $\hat{\psi}_+, \hat{\psi}_- \in \cal{O}^{\exp} (U) \bbrac{\hbar}$ on $U$ normalised at $x_0$ by $\hat{\psi}_\pm (x_0, \hbar) = 1$.
They form a basis for the space $\hat{\mathbb{S}} (U) \bbrac{\hbar}$ of all formal solutions.
Moreover, once a square-root branch $\sqrt{\DD_0}$ near $x_0$ has been chosen, they can be labelled and expressed as follows:
\eqntag{\label{210115215219}
	\hat{\psi}_{\pm} (x, \hbar) 
		\coleq \exp \left( - \frac{1}{\hbar} \int_{x_0}^x \hat{s}_{\pm} (t, \hbar) \dd{t} \right)
\fullstop{,}
}
where $\hat{s}_+, \hat{s}_- \in \cal{O} (U) \bbrac{\hbar}$ are the two unique formal solutions with leading-order terms $\lambda_+, \lambda_-$ of the formal singularly perturbed Riccati equation
\eqntag{
\label{210115165557}
	\hbar \del_x \hat{s} = \hat{s}^{\,2} - \hat{p} \, \hat{s} + \hat{q}
\fullstop
}
In fact, $\hat{\psi}_\pm$ are the unique formal solutions on $U$ satisfying the following initial conditions:
\eqntag{\label{210611191005}
	\hat{\psi}_\pm (x_0, \hbar) = 1
\qqtext{and}
	\evatlong{\hbar \del_x \hat{\psi}_\pm}{(x,\hbar) = (x_0, 0)} = \lambda_\pm (x_0)
\fullstop
}
\end{prop}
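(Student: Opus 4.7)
The plan is to translate the problem into the Riccati setting treated in \cite{MY2008.06492}. If $\hat{\psi} = \exp\bigl(-\tfrac{1}{\hbar}\int_{x_0}^x \hat{s}\,\dd{t}\bigr)$, a direct computation gives $\hbar \del_x \hat{\psi} = -\hat{s}\hat{\psi}$ and $\hbar^2 \del_x^2 \hat{\psi} = (\hat{s}^{\,2} - \hbar \del_x \hat{s})\hat{\psi}$, so \eqref{210302151519} becomes precisely the Riccati equation \eqref{210115165557}. Conversely, any exponential power series solution $\hat{\psi} = e^{-\Phi/\hbar}\hat{\Psi}$ with $\hat{\Psi}$ a unit in $\cal{O}(U)\bbrac{\hbar}$ determines a formal Riccati solution $\hat{s} = -\hbar \del_x \log \hat{\psi} = \del_x \Phi - \hbar \del_x \hat{\Psi}/\hat{\Psi} \in \cal{O}(U)\bbrac{\hbar}$.

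By the formal part of the main theorem of \cite{MY2008.06492}, the Riccati equation \eqref{210115165557} admits precisely two formal power series solutions $\hat{s}_\pm \in \cal{O}(U)\bbrac{\hbar}$ characterised by $\hat{s}_\pm|_{\hbar = 0} = \lambda_\pm$; the recursion for the higher coefficients is invertible because $\sqrt{\DD_0} = \lambda_+ - \lambda_-$ is a nowhere vanishing holomorphic function on the simply connected turning-point-free domain $U$. Substituting these into \eqref{210115215219} produces the two exponential power series solutions $\hat{\psi}_\pm$, with the normalisation $\hat{\psi}_\pm(x_0,\hbar) = 1$ automatic from the integration bound and with $\hbar \del_x \hat{\psi}_\pm(x_0, \hbar) = -\hat{s}_\pm(x_0, \hbar)$ yielding the stated initial condition at $\hbar = 0$ (up to sign). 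Uniqueness among exponential power series solutions follows at once: the logarithmic derivative sends any such solution to a formal Riccati solution whose leading term is a root of \eqref{210415145506}, hence one of $\lambda_\pm$, so coincides with $\hat{s}_\pm$ by \cite{MY2008.06492}, and then \eqref{210115215219} fixes $\hat{\psi}$.

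The basis claim uses the formal Wronskian $\hat{W} \coleq \hat{\psi}_+ \del_x \hat{\psi}_- - \hat{\psi}_- \del_x \hat{\psi}_+ = \hbar^{-1}(\hat{s}_+ - \hat{s}_-)\hat{\psi}_+ \hat{\psi}_-$, whose leading term $\hbar^{-1}\sqrt{\DD_0}$ is invertible on $U$; in particular $\hat{\psi}_\pm$ are $\Complex^{\exp}\bbrac{\hbar}$-linearly independent. Given a general transseries solution $\hat{\psi} \in \hat{\mathbb{S}}(U)\bbrac{\hbar}$, Cramer-type formulas against $\hat{W}$ produce coefficients $\hat{c}_\pm \in \Complex^{\exp}\bbrac{\hbar}$ expressing $\hat{\psi} = \hat{c}_+ \hat{\psi}_+ + \hat{c}_- \hat{\psi}_-$; differentiating these expressions and using \eqref{210302151519} gives $\del_x \hat{c}_\pm = 0$, and matching the initial data at $x_0$ determines the $\hat{c}_\pm$ uniquely.

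The hardest step will be the careful extraction of a Riccati solution from an arbitrary exponential power series solution $\hat{\psi} = e^{-\Phi/\hbar}\hat{\Psi}$. Inspecting the leading $\hbar$-order of \eqref{210302151519} after substitution produces the identity $\Psi_0\bigl((\del_x\Phi)^2 - p_0 \del_x \Phi + q_0\bigr) = 0$; combined with the absence of turning points and a recursive analysis of the higher $\hbar$-coefficients, this forces $\Psi_0$ to be nowhere vanishing on $U$ and $\del_x \Phi$ to equal one of the two distinct leading-order characteristic roots $\lambda_\pm$. Only then does $-\hbar \del_x \log \hat{\psi}$ make sense as an element of $\cal{O}(U)\bbrac{\hbar}$, whereupon the reduction to \cite{MY2008.06492} can be applied to finish the argument.
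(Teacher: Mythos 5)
Your overall strategy matches the paper's: both proofs reduce \eqref{210302151519} to the formal Riccati equation \eqref{210115165557} via the logarithmic-derivative ansatz, establish existence and uniqueness of $\hat{s}_\pm$ by expanding in powers of $\hbar$ (the non-vanishing of $\sqrt{\DD_0}$ on $U$ making the recursion solvable), and then treat the exponential structure separately. For the basis claim, however, you take a genuinely different route from the paper. The paper writes an arbitrary transseries solution $\hat{\psi} = \sum_\alpha \hat{\CC}_\alpha e^{-\Phi_\alpha/\hbar}\hat{\Psi}_\alpha$, substitutes it into the equation, and uses linear independence of the exponential prefactors to force each bracketed Laurent series to vanish, thereby driving each $\Phi_\alpha$ to be (a constant shift of) $\Phi_+$ or $\Phi_-$. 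You instead invert the formal Wronskian $\hat{W}$ and apply Cramer's rule, then use the first-order Wronskian ODE $\hbar\,\del_x \hat{W} = -\hat{p}\,\hat{W}$ to show the resulting coefficients $\hat{c}_\pm$ are $x$-independent. Both arguments are sound; yours is the familiar linear-ODE story transplanted to the transseries ring, while the paper's keeps everything at the level of the exponential decomposition and so requires no division. One technical remark on your route: $\hat{W}$ is an exponential power series with an $\hbar^{-1}$ overall factor, so it is not literally a unit in $\cal{O}^{\exp}(U)\bbrac{\hbar}$; you should work with $\hbar\hat{W}$ (which is invertible) or enlarge the ring, and then verify that the $\hat{c}_\pm$ land back in $\Complex^{\exp}\bbrac{\hbar}$, which indeed follows from $\del_x \hat{c}_\pm = 0$ by comparing $\hbar$-orders inside each constituent $e^{\Phi_k/\hbar}\hat{\Psi}_k$.

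There is one genuine gap in the uniqueness step, precisely where you flag ``the hardest step.'' You define $\hat{s} = -\hbar\,\del_x\log\hat{\psi} = \del_x\Phi - \hbar\,\del_x\hat{\Psi}/\hat{\Psi}$ and assert $\hat{s} \in \cal{O}(U)\bbrac{\hbar}$, but this requires $\Phi$ to have degree $0$ in $\hbar^{-1}$ — a priori $\Phi \in \cal{O}(U)[\hbar^{-1}]$, so $\del_x\Phi$ can carry negative $\hbar$-powers. Likewise the identity you write down,
\[
	\Psi_0\Big( (\del_x\Phi)^2 - p_0\,\del_x\Phi + q_0 \Big) = 0\,,
\]
is only the leading $\hbar$-order coefficient \emph{once} you already know $\deg_{\hbar^{-1}} \Phi = 0$. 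If $\Phi = \Phi_0 + \cdots + \Phi_{-m}\hbar^{-m}$ with $m \geq 1$, the leading order of the substituted equation occurs at $\hbar^{-2m}$, and its coefficient is $(\del_x\Phi_{-m})^2\,\Psi_0 = 0$; this forces $\Phi_{-m}$ constant (hence zero by the normalisation $\Phi(x_0,\hbar^{-1}) = 0$), and one descends inductively to $m = 0$. The paper devotes the first part of its uniqueness argument to exactly this reduction. Your phrase ``recursive analysis of the higher $\hbar$-coefficients'' reads as working upward in positive powers of $\hbar$, not downward from $\hbar^{-2m}$, so the elimination of the polar part of the exponent is neither stated nor implied and needs to be supplied explicitly before the reduction to the Riccati equation is legitimate.
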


The proof of this theorem is essentially a computation, presented in \autoref{210525083336}.

\begin{defn}{210302160338}
The two formal solutions $\hat{\psi}_\pm$ from \autoref{210118113644} are called the \dfn{formal WKB solutions} normalised at the regular point $x_0 \in U$.
The basis $\set{\smash{\hat{\psi}_+, \hat{\psi}_-}}$ is called the \dfn{formal WKB basis} normalised at $x_0$.
We will also refer to the two formal solutions $\hat{s}_{\pm}$ as the \dfn{formal characteristic roots}.
\end{defn}

\paragraph{}
Thus, \autoref{210118113644} explains that the formal WKB solutions are uniquely specified either by the normalisation condition and the requirement that they be exponential power series (i.e., having only one exponential prefactor) or equivalently by the two initial conditions \eqref{210611191005}.
Note that the second initial condition is nothing but a way to select the correct exponential prefactor.

\paragraph{WKB recursion.}
A practical advantage of formal WKB solutions is that the formal characteristic roots can be computed very explicitly by solving a recursive tower of linear algebraic equations.
The following lemma is a direct consequence of the proof of \autoref{210118113644}.

\begin{lem}{210415162549}
The coefficients $s^\pto{k}_\pm \in \cal{O} (U)$ of the formal characteristic roots
\eqntag{\label{210612102328}
	\hat{s}_{\pm} (x, \hbar) = \sum_{k=0}^\infty s_{\pm}^\pto{k} (x) \hbar^k
}
are given by the following recursive formula:
$s_\pm^\pto{0} = \lambda_\pm = \tfrac{1}{2} (p_0 \pm \sqrt{\DD_0})$, and for $k \geq 1$,
\eqntag{\label{210115170017}
	s_{\pm}^\pto{k} = \frac{\pm 1}{\sqrt{\DD_0}}
	\left(
			 \del_x s_{\pm}^\pto{k-1}
				- \sum_{k_1 + k_2 = k}^{k_1, k_2 \neq k} s_\pm^\pto{k_1} s_\pm^\pto{k_2}
					+ \sum_{k_1 + k_2 = k}^{k_2 \neq k} p_{k_1} s_\pm^\pto{k_2}
					- q_k
	\right)
\fullstop
}
\end{lem}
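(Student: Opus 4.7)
The statement is essentially a bookkeeping computation that extracts the coefficient-wise content of the formal Riccati equation \eqref{210115165557} applied to the power series ansatz $\hat{s}_\pm = \sum_{k \geq 0} s^\pto{k}_\pm (x) \hbar^k$ with $s^\pto{k}_\pm \in \cal{O}(U)$. The plan is to substitute this ansatz into \eqref{210115165557}, expand using \eqref{210323091619}, and then match the coefficient of $\hbar^k$ on the two sides for each $k \geq 0$.

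First I would handle the base case $k=0$. The left-hand side $\hbar \del_x \hat{s}$ has no $\hbar^0$ term, so matching constants in $\hbar$ in $\hat{s}^2 - \hat{p}\,\hat{s} + \hat{q} = 0$ gives $(s^\pto{0})^2 - p_0 s^\pto{0} + q_0 = 0$, which is precisely the leading-order characteristic equation \eqref{210415145506}. Since $U$ contains no turning points, $\sqrt{\DD_0}$ is a well-defined holomorphic function on $U$ (after fixing the branch at $x_0$), and the two holomorphic solutions are exactly $\lambda_\pm = \tfrac{1}{2}(p_0 \pm \sqrt{\DD_0})$; this is the asserted initial datum and the starting point forced by \autoref{210118113644}.

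Next I would perform the induction step for $k \geq 1$. Matching the coefficient of $\hbar^k$ on the two sides of \eqref{210115165557} yields
\eqntag{\label{plan210118}
    \del_x s^\pto{k-1}_\pm
        = \sum_{k_1 + k_2 = k} s^\pto{k_1}_\pm s^\pto{k_2}_\pm
            - \sum_{k_1 + k_2 = k} p_{k_1} s^\pto{k_2}_\pm
            + q_k \fullstop
}
Now I would segregate those summands that involve the unknown $s^\pto{k}_\pm$. In the quadratic sum, only the pairs $(k_1, k_2) = (k,0)$ and $(0,k)$ contribute such terms, yielding $2 s^\pto{0}_\pm s^\pto{k}_\pm$; in the linear sum, only the pair $(k_1, k_2) = (0,k)$ contributes, yielding $p_0 s^\pto{k}_\pm$. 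The crucial algebraic identity is then $2 s^\pto{0}_\pm - p_0 = 2\lambda_\pm - p_0 = \pm \sqrt{\DD_0}$, which follows directly from \eqref{210305095811}. Rearranging \eqref{plan210118} and dividing by $\pm \sqrt{\DD_0}$ (nonvanishing on $U$) gives precisely the formula \eqref{210115170017}.

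The only thing to note is that this computation simultaneously proves existence and uniqueness of $\hat{s}_\pm$ as formal power series in $\hbar$ with holomorphic coefficients on $U$: the recursion produces each $s^\pto{k}_\pm$ uniquely from $\lambda_\pm$ and the lower-order coefficients, and each is visibly holomorphic on $U$ since $\sqrt{\DD_0}$ is. There is no genuine obstacle here; the only subtlety worth flagging is the book-keeping of which summation indices produce the $s^\pto{k}_\pm$ terms that must be moved to the left-hand side, and this is precisely the role of the superscripts "$k_1, k_2 \neq k$" and "$k_2 \neq k$" decorating the two sums in \eqref{210115170017}.
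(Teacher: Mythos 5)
Your proof is correct and proceeds exactly as the paper intends: the paper declares \autoref{210415162549} a "direct consequence of the proof of \autoref{210118113644}", whose existence step simply says to expand the formal Riccati equation \eqref{210115165557} term by term in $\hbar$, and your computation makes that expansion explicit. In particular your identification of $2 s^\pto{0}_\pm - p_0 = \pm\sqrt{\DD_0}$ as the factor that must be inverted, and your bookkeeping of which $(k_1,k_2)$ pairs produce the $s^\pto{k}_\pm$ term (hence which pairs the superscripts on the sums exclude), are accurate.
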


Explicitly, these formulas for low values of $k$ are:
\eqnstag{
\label{210223183123}
	s_\pm^\pto{1}
		&= \frac{\pm 1}{\sqrt{\DD_0}} 
			\Big( \del_x \lambda_\pm + p_1 \lambda_\pm - q_1 \Big)
\fullstop{,}
\\\label{210525213123}
	s_\pm^\pto{2}
		&= \frac{\pm 1}{\sqrt{\DD_0}}
			\Big( \del_x s_\pm^\pto{1} - ( s_\pm^\pto{1} )^2 + p_1 s_\pm^\pto{1} + p_2 \lambda_\pm - q_2 \Big)
\fullstop{,}
\\
	s_\pm^\pto{3}
		&= \frac{\pm 1}{\sqrt{\DD_0}}
			\Big( \del_x s_\pm^\pto{2} - 2 s_\pm^\pto{1} s_\pm^\pto{2} + p_1 s_\pm^\pto{2} + p_2 s_\pm^\pto{1} + p_3 \lambda_\pm - q_3 \Big)
\fullstop{,}
\\
	s_\pm^\pto{4}
		&= \frac{\pm 1}{\sqrt{\DD_0}}
			\Big( \del_x s_\pm^\pto{3} - 2 s_\pm^\pto{1} s_\pm^\pto{3} - ( s_\pm^\pto{2} )^2 + p_1 s_\pm^\pto{3} + p_2 s_\pm^\pto{2} + p_3 s_\pm^\pto{1} + p_4 \lambda_\pm - q_4 \Big)
\GREY{.}
}

\begin{example}[unperturbed coefficients]{210525213548}
In the simplest but very common situation where the coefficients $p,q$ are independent of $\hbar$, identities \eqref{210115170017} simplify as follows:
\eqntag{\label{210526062408}
	s_{\pm}^\pto{k} = \pm \frac{1}{\sqrt{\DD_0}}
		\left(
			 \del_x s_{\pm}^\pto{k}
				- \sum_{k_1 + k_2 = k}^{k_1, k_2 \neq k} s_{\pm}^\pto{k_1} s_{\pm}^\pto{k_2}
		\right)
\fullstop
}
For low values of $k$, these are:
\eqntag{
\begin{gathered}
	s_\pm^\pto{1}
		= \pm \frac{\del_x \lambda_\pm}{\sqrt{\DD_0}},
\quad
	s_\pm^\pto{2}
		= \pm \frac{\del_x s_\pm^\pto{1} - \big(s_\pm^\pto{1}\big)^2}{\sqrt{\DD_0}},
\quad
	s_\pm^\pto{3}
		= \pm \frac{\del_x s_\pm^\pto{2} - 2s_\pm^\pto{1} s_\pm^\pto{2}}{\sqrt{\DD_0}}
\fullstop{,}
\\
	s_\pm^\pto{4}
		= \frac{\pm 1}{\sqrt{\DD_0}}
			\Big( \del_x s_\pm^\pto{3} - 2 s_\pm^\pto{1} s_\pm^\pto{3} - ( s_\pm^\pto{2} )^2 \Big)
\fullstop
\end{gathered}
}
\end{example}

\begin{example}[Schrödinger equation]{210526062307}
For the Schrödinger equation \eqref{210115121042}, the leading-order characteristic roots are $\lambda_\pm = \pm \sqrt{\QQ_0}$.
Formula \eqref{210115170017} for the coefficients reduces to the following:
\eqntag{\label{210526062347}
	s^\pto{k}_\pm = \pm \frac{1}{2\sqrt{\QQ_0}}
	\left(
			 \del_x s^\pto{k-1}_\pm
				- \sum_{k_1 + k_2 = k}^{k_1, k_2 \neq k} s^\pto{k_1}_\pm s^\pto{k_2}_\pm
					+ \QQ_k
	\right)
\fullstop
}
If, furthermore, the potential $\QQ$ is independent of $\hbar$ (i.e., $\QQ = \QQ_0$), then every $\QQ_k$ in the recursive formula \eqref{210526062347} is $0$.
In this case, for low values of $k$:
\eqn{
\tag*{\qedhere}
	s^\pto{1}_\pm
		= \frac{1}{4} \frac{\QQ'}{\QQ},
\quad
	s^\pto{2}_\pm
		= 	\pm \frac{1}{8} \frac{\QQ''}{\QQ^{3/2}} 
			\mp \frac{5}{32} \frac{(\QQ')^2}{\QQ^{5/2}},
\quad
	s^\pto{3}_\pm
		= \frac{1}{16} \frac{\QQ'''}{\QQ^2}
			- \frac{9}{16} \frac{\QQ' \QQ''}{\QQ^3}
			+ \frac{35}{64} \frac{(\QQ')^3}{\QQ^4}
\fullstop
}
where ${}^\prime$ denotes $\del_x$.
So, for instance, for the $\hbar$-Airy equation \eqref{210603204921}, $\QQ = x$, so these formulas reduce to: $\lambda_\pm = \pm \sqrt{x}$, $s^\pto{1}_\pm = \frac{1}{4} x^{-1}$, $s^\pto{1}_\pm = \mp \frac{5}{32} x^{-5/2}$, $s^\pto{2}_\pm = \frac{35}{64} x^{-4}$.
\end{example}

\paragraph{WKB exponents.}
To express \eqref{210115215219} more explicitly as an exponential power series, we separate out the leading-order part of the formal solutions to the Riccati equation.
Let us define
\eqntag{\label{210526160837}
	\hat{\SS}_\pm (x, \hbar)
		\coleq \sum_{k=0}^\infty s^\pto{k+1}_\pm (x) \hbar^{k}
\qqtext{so that}
	\hat{s}_\pm = \lambda_\pm + \hbar \hat{\SS}_\pm
\fullstop
}
Then the formal WKB solutions $\hat{\psi}_\pm$ can be written as follows:
\eqnstag{\label{210115215327}
	\hat{\psi}_\pm (x, \hbar) 
		&= e^{-\Phi_\pm (x) / \hbar} \; \hat{\Psi}_\pm (x, \hbar)
\\
\label{210617192442}
		&= \exp \left( - \frac{1}{\hbar} 
				\int\nolimits_{x_0}^x \lambda_\pm (t) \dd{t} \right)
			\exp \left( 
				- \int\nolimits_{x_0}^x \hat{\SS}_\pm (t, \hbar) \dd{t} \right)
\fullstop{,}
}
where $\Phi_\pm \in \cal{O} (U)$ and $\hat{\Psi}_\pm \in \cal{O} (U) \bbrac{\hbar}$ are defined by the following formulas:
\eqnstag{
\label{210223093541}
		\Phi_\pm (x)
		&= \Phi_\pm (x)
		\coleq \int\nolimits_{x_0}^x \lambda_\pm (t) \dd{t}
\fullstop{,}
\\
\label{210116171149}
	\hat{\Psi}_\pm (x, \hbar)
		&= \hat{\Psi}_\pm (x, \hbar)
		\coleq 
		\sum_{n=0}^\infty \Psi_\pm^\pto{n} (x) \hbar^n
		\coleq \exp \left( - \int\nolimits_{x_0}^x \hat{\SS}_\pm (t, \hbar) \dd{t} \right)
\fullstop
}
The functions $\Phi_\pm$ are sometimes called the \dfn{WKB exponents}.
The integral of $\hat{\SS}_\pm$ in \eqref{210617192442} is interpreted as termwise integration.
In principle, the basepoint of integration $x_0$ may be taken on the boundary of $X$ as long as these integrals make sense.

\subsection{Remarks on Formal WKB Solutions}

\begin{rem}[\textbf{Behaviour at a turning point}]{210603182824}
Let $x_{\rm{tp}} \in X$ be a turning point of order $m \geq 1$, which means it is an $m$-th order zero of the leading-order characteristic discriminant $\DD_0$.
By examining the recursive formula \eqref{210115170017}, one can conclude that the coefficients $s^\pto{k}_\pm$ of each formal characteristic root $\hat{s}_\pm$ have the following behaviour near $x_{\rm{tp}}$.

When $m$ is odd, the leading-order characteristic roots $\lambda_\pm$ have a square-root branch singularity at $x_{\rm{tp}}$, but they are bounded as $x \to x_{\rm{tp}}$ in sectors.
Every subleading-order coefficient $s^\pto{k}_\pm$ has at worst a square-root branch singularity at $x_{\rm{tp}}$ but in general it is unbounded as $x \to x_{\rm{tp}}$.
On the other hand, when $m$ is even, the leading-order characteristic roots $\lambda_\pm$ are holomorphic at $x_{\rm{tp}}$.
Every subleading-order coefficient $s^\pto{k}_\pm$ is single-valued near $x_{\rm{tp}}$ but in general it has a pole there.

For either parity of $m$, in generic situation, the $k$-th order coefficient $s^\pto{k}_\pm$ (with $k \geq 1$) is bounded below by $x^{-km/2}$, where $x$ is a coordinate centred at $x_{\rm{tp}}$.
Therefore, the singular behaviour of the coefficients of $\hat{s}_\pm$ gets progressively worse in higher and higher orders of $\hbar$.
The upshot of this analysis is that in general it is not possible to expect the formal characteristic roots $\hat{s}_\pm$ (and therefore the corresponding formal WKB solutions $\hat{\psi}_\pm$) to be the uniform asymptotic expansions of exact solutions near a turning point.
This phenomenon lies at the heart of the breakdown of singular perturbation theory in the vicinity of a turning point.
\end{rem}

\begin{rem}[\textbf{Alternative expression for formal WKB solutions}]{210603184410}
In the literature, a different but closely related expression and normalisation for the formal WKB solutions is commonly used (see, e.g., \cite[equation (2.11)]{MR2182990} or \cite[equation (2.24)]{MR3280000}).
Consider the following \textit{odd} and \textit{even} parts of the formal characteristic solutions:
\eqntag{
\label{210221182241}
	\hat{s}_\text{od} \coleq \tfrac{1}{2} \big( \hat{s}_+ - \hat{s}_- \big)
\qqtext{and}
	\hat{s}_\text{ev} \coleq \tfrac{1}{2} \big( \hat{s}_- + \hat{s}_+ \big)
\fullstop
}
They satisfy $\hat{s}_{\pm} = \pm \hat{s}_\text{od} + \hat{s}_\text{ev}$.
It follows from the Riccati equation that the even part $\hat{s}_\text{ev}$ can be expressed in terms of the odd part $\hat{s}_\text{od}$ as follows:
\eqntag{
\label{210116173408}
	\hat{s}_\text{ev} = - \tfrac{1}{2} \hbar \del_x \log \hat{s}_\text{od} - \tfrac{1}{2} \hat{p}
\fullstop
}
Substituting these expressions into \eqref{210115215219} yields
\eqntag{
\label{210116180639}
	\hat{\psi}_{\pm} (x, \hbar)
		= \exp \left( \frac{1}{\hbar} \int_{x_0}^x
			\Big( \pm \hat{s}_\text{od} (t, \hbar) 
				- \tfrac{1}{2} \del_t \log \hat{s}_\text{od} (t, \hbar)
				- \tfrac{1}{2} \hat{p} (t, \hbar) \Big) \dd{t} 
				\right)
\fullstop
}
To integrate out the term involving the logarithmic derivative of $\hat{s}_\text{od}$, we must first make sense of choosing a square root of the formal power series $\hat{s}_\text{od}$.
To this end, we write $\hat{s}_\pm = \lambda_\pm + \hbar \hat{\SS}_\pm$ and put $\hat{\SS}_\text{od} \coleq \tfrac{1}{2} \big( \hat{\SS}_+ - \hat{\SS}_- \big)$.
Then using the identity $\sqrt{\DD_0} = \lambda_+ - \lambda_-$, we get
\eqntag{
	\hat{s}_\text{od} 
		= \tfrac{1}{2} \sqrt{\DD_0} + \hbar \hat{\SS}_\text{od}
		= \tfrac{1}{2} \sqrt{\DD_0} \Big( 1 + \tfrac{2}{\sqrt{\DD_0}} \hat{\SS}_\text{od} \hbar \Big)
\fullstop
}
Upon fixing a square-root branch $\DD_0^{\nicefrac{1}{4}}$ of $\sqrt{\DD_0}$, we let
\eqntag{
	\pm \sqrt{\hat{s}_\text{od} (x, \hbar)}
		\coleq \pm \tfrac{1}{\sqrt{2}} \DD_0^{\nicefrac{1}{4}}
			\sqrt{ 1 + \tfrac{2}{\sqrt{\DD_0}} \hat{\SS}_\text{od} \hbar }
		= \pm \tfrac{1}{\sqrt{2}} \DD_0^{\nicefrac{1}{4}}
			\sum_{k=1}^\infty \left( \tfrac{2}{\sqrt{\DD_0}} \hat{\SS}_\text{od} \hbar \right)^k
\fullstop
}
Notice that this is an invertible formal power series in $\hbar$ with holomorphic coefficients.
We therefore obtain the following four exponential power series:
\eqntag{
\label{210116190424}
	\hat{\psi}_{\pm,\varepsilon} (x, \hbar)
		= \frac{1}{\varepsilon \sqrt{\hat{s}_\text{od} (x, \hbar)}}
			\exp \left( \frac{1}{\hbar} \int_{x_0}^x
			\Big( \pm \hat{s}_\text{od} (t, \hbar)
				- \tfrac{1}{2} \hat{p} (t, \hbar) \Big) \dd{t} 
				\right)
\fullstop{,}
}
where $\varepsilon \in \set{+,-}$.
These are the expressions that in the literature are often referred to as (formal) \textit{WKB solutions}, although the choice of $\varepsilon$ is usually made implicitly.
Of course, it is always possible to choose the branch $\DD_0^{\nicefrac{1}{4}}$ such that $\hat{\psi}_{\pm, \varepsilon} = \hat{\psi}_\pm$.
\end{rem}

\begin{rem}[\textbf{The first-order WKB approximation}]{210603184521}
Expression \eqref{210116190424} is the more traditional form used to derive the WKB approximation for Schrödinger equations.
Indeed, if $p = 0$ and $q = -\QQ$, then $\DD_0 = 4 \QQ_0$ and $\lambda_\pm = \pm \sqrt{\QQ_0}$, so the leading-order term of $\hat{s}_\text{od}$ is simply $\sqrt{\QQ_0}$.
Truncating $\hat{s}_\text{od}$ at the leading order yields the famous analytic expressions
\eqntag{
	\pm \frac{1}{\sqrt[4]{\QQ_0}}
			\exp \left( \pm \frac{1}{\hbar} \int\nolimits_{x_0}^x \sqrt{\QQ_0 (t)} \dd{t} \right)
\fullstop
\tag*{\qedhere}
}
\end{rem}

\begin{rem}[\textbf{Normalisation at a turning point}]
Expression \eqref{210116190424} is often used to normalise WKB solutions at a turning point.
However, the existence of \text{exact} WKB solutions with such normalisation depends on some more global properties of the differential equation, which is not guaranteed in general.
In practical terms, even if an \text{exact} WKB solution normalised at a \text{regular} point exists, the corresponding exact WKB solution normalised at some nearby turning point may not exist due to the fact that the change of normalisation constant may not exist or does not have appropriate asymptotic behaviour as $\hbar \to 0$.
For this reason, in this paper we focus our attention exclusively on WKB solution normalised at regular points.
A more detailed explanation of this phenomenon will appear in \cite{MY210604104440}.
\end{rem}

\begin{rem}[\textbf{Formal characteristic discriminant}]{210603184758}
Since the formal characteristic roots $\hat{s}_\pm$ are uniquely determined, we can introduce the \dfn{formal characteristic discriminant} of the differential equation \eqref{210302151519} by the usual formula for discriminants:
\eqntag{
	\hat{\DD} 
		\coleq - \big(\hat{s}_+ - \hat{s}_-\big) \big(\hat{s}_- - \hat{s}_+\big)
		= \big(\hat{s}_+ - \hat{s}_-\big)^2
		\in \cal{O} (U) \bbrac{\hbar}
\fullstop
}
Its leading-order part is the leading-order characteristic discriminant $\DD_0$.
If we furthermore define $\sqrt{\hat{\DD}} \coleq \hat{s}_+ - \hat{s}_-$, so that its leading-order part is just $\sqrt{\DD_0} = \lambda_+ - \lambda_-$, then we obtain the following relation:
\eqntag{
	\hat{s}_\text{od} = \tfrac{1}{2} \sqrt{\hat{\DD}}
\fullstop
}
Then \eqref{210116190424} can be written as follows:
\eqntag{
\label{210126103912}
	\hat{\psi}_{\pm,\varepsilon} (x, \hbar)
		= \frac{\sqrt{2}}{\varepsilon \sqrt[4]{\hat{\DD} (x, \hbar)}}
			\exp \left( \frac{1}{2\hbar} \int_{x_0}^x
			\Big( \pm \sqrt{\hat{\DD} (t, \hbar)}
				- \hat{p} (t, \hbar) \Big) \dd{t} 
				\right)
\fullstop
}
\end{rem}

\paragraph{Convergence of the formal Borel transform.}\label{210603185146}
Our final remark about formal WKB solutions in this section is a lemma that says that if $\hat{p}, \hat{q}$ are in a certain Gevrey regularity class, then the formal WKB solutions are in the corresponding exponential Gevrey regularity class.
Gevrey series are briefly reviewed in \autoref{210224181219}.

Let $x_0 \in X$ be a regular point and $U \subset X$ a simply connected neighbourhood of $x_0$ free of turning points.
Let $\hat{\psi}_+, \hat{\psi}_- \in \cal{O}^{\exp} (U) \bbrac{\hbar}$ be the two formal WKB solutions normalised at $x_0$.
Consider their formal Borel transform (see \autoref{210616130753}):
\eqntag{\label{210603121658}
	\hat{\Borel} \big[ \, \hat{\psi}_\pm \, \big] (x, \xi)
		= \exp \left( - \frac{1}{\hbar} 
			\int\nolimits_{x_0}^x \lambda_\pm (t) \dd{t} \right)
			\hat{\Borel} \big[ \, \hat{\Psi}_\pm \, \big] (x, \xi)
\fullstop
}
It is convenient to also consider the following \textit{exponentiated formal Borel transform} of $\hat{\psi}_\pm$ by using the exponential presentation \eqref{210617192442} and applying the ordinary formal Borel transform directly to the exponent:
\eqntag{\label{210526162424}
	\hat{\Borel}^{\exp} \big[ \, \hat{\psi}_\pm \, \big] (x, \xi)
		\coleq \exp \left( - \frac{1}{\hbar} 
			\int\nolimits_{x_0}^x \lambda_\pm (t) \dd{t} \right)
		\exp \left( 
			- \int\nolimits_{x_0}^x \hat{\Borel} \big[ \, \hat{\SS}_\pm \, \big] (t, \xi) \dd{t} \right)
\fullstop
}
Note that $\hat{\Borel}^{\exp} \big[ \, \hat{\psi}_\pm \, \big] \neq \hat{\Borel} \big[ \, \hat{\psi}_\pm \, \big]$ because the Borel transform converts multiplication into convolution, but clearly the convergence properties of one imply the same about the other.
Denote the formal Borel transform of $\hat{\SS}_\pm$ by
\eqntag{\label{210603121701}
	\hat{\sigma}_\pm (x, \xi)
		\coleq \hat{\Borel} \big[ \, \hat{\SS}_\pm \, \big] (x, \xi)
		= \sum_{k=0}^\infty s_{k+2}^\pto{i} (x) \frac{\xi^k}{k!}
\fullstop
}
Then we have the following assertion.

\begin{prop}{210603121655}
Suppose the coefficients $\hat{p}, \hat{q}$ are locally uniformly Gevrey power series on $U$; in symbols, $\hat{p}, \hat{q} \in \cal{G} (U) \bbrac{\hbar}$.
Then the formal Borel transforms $\hat{\sigma}_\pm$ are locally uniformly convergent power series and hence $\hat{\SS}_\pm$ are locally uniformly Gevrey series; in symbols, $\hat{\sigma}_\pm \in \cal{O} (U) \set{\xi}$ and $\hat{\SS}_\pm \in \cal{G} (U) \bbrac{\hbar}$.
Consequently, exponentiated formal Borel transforms $\hat{\Borel}^{\exp} \big[ \, \hat{\Psi}_\pm \, \big]$ and hence the ordinary formal Borel transforms $\hat{\Borel} \big[ \, \hat{\Psi}_\pm \, \big]$ are locally uniformly convergent power series in $\xi$ on $U$.
Thus, the formal WKB solutions are locally uniformly exponential Gevrey series; in symbols, $\hat{\psi}_\pm \in \cal{G}^{\exp} (U) \bbrac{\hbar}$.
\end{prop}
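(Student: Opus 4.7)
The approach is to reduce the proposition to the Borel convergence theory for the singularly perturbed Riccati equation established in the author's prior work \cite{MY2008.06492}, and then propagate the conclusions through the exponential formula \eqref{210617192442}.

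The first step is to isolate the subleading part. Substituting the decomposition $\hat{s}_\pm = \lambda_\pm + \hbar \hat{\SS}_\pm$ into the Riccati equation \eqref{210115165557} produces a singularly perturbed nonlinear ODE for $\hat{\SS}_\pm$ whose coefficients are polynomial expressions in $\hat{p}$, $\hat{q}$, $\lambda_\pm$, and $\del_x \lambda_\pm$. Applying the formal Borel transform in $\hbar$ converts this ODE into a nonlinear convolution integral equation for $\hat{\sigma}_\pm = \hat{\Borel}[\hat{\SS}_\pm]$, since $\hat{\Borel}$ sends multiplication to convolution in $\xi$ and sends $\hbar$ to an antiderivative in $\xi$. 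The input data are the Borel transforms of $\hat{p}$ and $\hat{q}$, which are by hypothesis locally uniformly convergent power series in $\xi$ on $U$. The main analytic theorem of \cite{MY2008.06492} asserts that this convolution equation admits a unique locally uniformly convergent solution once a choice of leading-order root $\lambda_\pm$ is fixed, yielding $\hat{\sigma}_\pm \in \cal{O}(U)\set{\xi}$, equivalently $\hat{\SS}_\pm \in \cal{G}(U)\bbrac{\hbar}$.

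The remaining assertions follow by closure properties of Gevrey-$1$ series. Termwise integration in $x$ preserves local uniform Gevrey regularity, so $\int_{x_0}^x \hat{\SS}_\pm(t,\hbar)\dd{t} \in \cal{G}(U)\bbrac{\hbar}$; since $\cal{G}(U)\bbrac{\hbar}$ is a commutative algebra stable under composition with entire functions, applying the exponential gives $\hat{\Psi}_\pm \in \cal{G}(U)\bbrac{\hbar}$, which is exactly the statement that $\hat{\Borel}[\hat{\Psi}_\pm]$ is locally uniformly convergent in $\xi$. Convergence of the exponentiated Borel transform $\hat{\Borel}^{\exp}[\hat{\psi}_\pm]$ defined by \eqref{210526162424} is even more transparent: the inner Borel transform $\hat{\Borel}[\hat{\SS}_\pm] = \hat{\sigma}_\pm$ has already been shown to converge, integration in $x$ preserves this, and the pointwise exponential of a convergent power series in $\xi$ with coefficients holomorphic in $x$ remains convergent in a neighbourhood of the origin.

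The only genuine analytic difficulty is hidden in the second step, namely the convergence of $\hat{\sigma}_\pm$ itself. The presence of the quadratic convolution term $\hat{\sigma}_\pm \ast \hat{\sigma}_\pm$ in the Borel-plane integral equation obstructs a naive majorant argument, and a careful fixed-point construction on an appropriate Banach space of holomorphic germs in the Borel plane is required, uniformly in the parameter $x$ as it ranges over compact subsets of $U$. This is precisely the analytic core of \cite{MY2008.06492}, and in the present proof it is invoked as a black box; the rest of the argument is essentially formal manipulation and algebraic closure properties of the Gevrey class.
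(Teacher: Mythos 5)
Your proposal is correct and follows essentially the paper's own route: the paper gives no self-contained proof but defers to \cite[Lemma 3.11]{MY2008.06492} for the convergence of the formal Borel transform of the Riccati solution, which is the same black box you invoke (you call it ``the main analytic theorem'' of \cite{MY2008.06492}, but the precise reference for the \emph{formal} convergence statement is Lemma~3.11 there, not the exact-solution theorem). The surrounding scaffolding you supply --- isolating $\hat{\SS}_\pm$, passing to the Borel-plane convolution equation, then propagating convergence through termwise integration in $x$ and exponentiation via Gevrey-$1$ closure --- is the formal reasoning the paper leaves implicit.
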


This proposition is not necessary for the proof of our main result in this paper (\autoref{210116200501}).
In fact, on certain subsets $U \subset X$, this proposition can be seen as a consequence of \autoref{210116200501} (or more specifically of \autoref{210603145334}).
In more generality, a direct proof can be found in \cite[Lemma 3.11]{MY2008.06492}.

\begin{example}[mildly perturbed coefficients]{210603122135}
If $\hat{p}, \hat{q}$ are polynomials in $\hbar$, then necessarily $\hat{p}, \hat{q} \in \cal{G} (U) \bbrac{\hbar}$; i.e., they automatically satisfy the hypothesis of \autoref{210603121655}.
\end{example}

Concretely, \autoref{210603121655} says that if the coefficients $p_k, q_k$ of the power series $\hat{p},\hat{q}$ grow no faster than $k!$, then the power series coefficients $\Psi^\pto{k}_\pm$ of the formal WKB solution $\hat{\psi}_\pm$ given by \eqref{210116171149} likewise grow no faster than $k!$.
This is made precise in the following corollary.

\begin{cor}[at most factorial growth]{211029102243}
Let $x_0 \in X$ be a regular point and let $U \subset X$ be any simply connected neighbourhood of $x_0$ free of turning points.
Let $\hat{\psi}_\pm = e^{-\Phi_\pm / \hbar} \hat{\Psi}_\pm$ be the formal WKB solutions on $U$ normalised at $x_0$, written as in \eqref{210116171149}.
Take any pair of nested compactly contained subsets $U_0 \Subset U_1 \Subset U$, and suppose that there are real constants $\AA, \BB > 0$ such that
\eqntag{\label{211029102554}
	\big| p_k (x) \big|,
	\big| q_k (x) \big|
	\leq \AA \BB^{k} k!
\qqquad
	(\forall k \geq 0, \forall x \in U_1)
\fullstop
}
Then there are real constants $\CC, \MM > 0$ such that
\eqntag{\label{211029102557}
	\big| \Psi^\pto{k}_\pm (x) \big|
	\leq \CC \MM^{k} k!
\qqqqquad
	(\forall k \geq 0, \forall x \in U_0)
\fullstop
}
In particular, if $\hat{p}, \hat{q}$ are polynomials in $\hbar$, then estimates \eqref{211029102557} hold.
\end{cor}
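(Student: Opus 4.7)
The plan is to recognize that the hypothesis \eqref{211029102554} is exactly the assertion that $\hat{p}, \hat{q}$ lie in the Gevrey class $\cal{G}(U_1)\bbrac{\hbar}$ with a common pair of Gevrey constants $(\AA, \BB)$ valid uniformly over $U_1$, and that the desired conclusion \eqref{211029102557} is the corresponding uniform Gevrey-$1$ bound on the output coefficients $\Psi_\pm^\pto{k}$ on $U_0$. In this sense the corollary is just the quantitative/effective repackaging of \autoref{210603121655}, with the local-to-uniform passage handled by a routine compactness argument on $\bar{U}_0 \subset U_1$.

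Concretely, I would proceed in three steps. First, choose any simply connected open $V$ with $\bar{U}_0 \subset V \subset U_1$; since $U_1 \subset U$ is free of turning points so is $V$, and the hypothesis \eqref{211029102554} implies that $\hat{p}, \hat{q}$ are locally uniformly Gevrey on $V$. Second, apply \autoref{210603121655} with $V$ in place of $U$ to conclude that $\hat{\SS}_\pm \in \cal{G}(V)\bbrac{\hbar}$, and hence, by the termwise antidifferentiation and exponentiation in \eqref{210116171149} (which preserve locally uniform Gevrey regularity, since on the Borel side they correspond to convolution and an analytic entire operation on $\xi$), also $\hat{\Psi}_\pm \in \cal{G}(V)\bbrac{\hbar}$. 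This supplies, for every compact $K \subset V$, constants $\CC_K, \MM_K > 0$ such that $|\Psi_\pm^\pto{k}(x)| \leq \CC_K \MM_K^k k!$ for all $k \geq 0$ and $x \in K$. Third, take $K \coleq \bar{U}_0$ to extract the uniform constants $\CC, \MM$ in \eqref{211029102557}. The ``in particular'' clause is immediate: if $\hat{p}, \hat{q}$ are polynomials in $\hbar$, only finitely many $p_k, q_k$ are nonzero, each is holomorphic and hence bounded on the compact set $\bar{U}_1$, so for any fixed $\BB > 0$ one chooses $\AA$ large enough to dominate these finitely many coefficients and \eqref{211029102554} holds automatically.

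\textbf{Main obstacle.} There is essentially no substantive obstruction: the analytic content is packaged inside \autoref{210603121655} (ultimately \cite[Lemma 3.11]{MY2008.06492}), and the rest is bookkeeping. The only care required is the choice of the auxiliary simply connected domain $V$ and the verification that local-uniform Gevrey estimates on $V$ restrict to a uniform estimate on the compact set $\bar{U}_0$, which is a standard finite-cover argument. If one prefers a self-contained argument avoiding \autoref{210603121655}, the bounds can instead be obtained by induction on $k$ directly from the recursion \eqref{210115170017} using Cauchy estimates to control $\del_x s_\pm^\pto{k-1}$ on $U_0$ in terms of sup-norms on $U_1$, together with a geometric-series estimate for the quadratic convolution sum $\sum s_\pm^\pto{k_1} s_\pm^\pto{k_2}$; this is essentially the proof in \cite[Lemma 3.11]{MY2008.06492}.
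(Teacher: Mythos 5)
Your proposal is correct and takes essentially the same route as the paper, which treats the corollary as an immediate quantitative repackaging of \autoref{210603121655} (in turn proved in \cite[Lemma 3.11]{MY2008.06492}) and offers no separate argument. Your three-step reduction --- restrict to an auxiliary domain $V$ with $\bar{U}_0 \subset V \subset U_1$, invoke \autoref{210603121655} to get $\hat{\psi}_\pm \in \cal{G}^{\exp}(V)\bbrac{\hbar}$, then extract uniform constants on the compact set $\bar{U}_0$ by a finite-cover argument --- is exactly the bookkeeping the paper leaves implicit, and the closing remark about polynomial $\hat{p},\hat{q}$ is handled the same way (finitely many coefficients, each bounded on a compact set). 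Your sketched alternative via direct induction on the recursion \eqref{210115170017} with Cauchy estimates is also sound and is, as you note, essentially the content of \cite[Lemma 3.11]{MY2008.06492} itself, so it does not constitute a genuinely independent approach.
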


\section{WKB Geometry}
\label{210127124821}

In this section, we introduce a coordinate transformation which plays a central role in our construction of exact WKB solutions in \autoref{210418111002}.
It is used to determine regions in $\Complex_x$ where the Borel-Laplace method can be applied to our differential equation.

The material of this section can essentially be found in \cite[\S9-11]{MR743423} (see also \cite[\S3.4]{MR3349833}).
These references use the language of foliations given by quadratic differentials on Riemann surfaces, where the quadratic differential in question is $\DD_0 (x) \dd{x}^2$.
The reader may be more familiar with the set of critical leaves of this foliation which is encountered in the literature under various names including \textit{Stokes curves}, \textit{Stokes graph}, \textit{spectral network}, \textit{geodesics}, and \textit{critical trajectories} \cite{MR3003931, MR2182990, MR1209700, MR3115984, 1902.03384}.

To keep the discussion a little more elementary, we state the relevant definitions and facts by appealing directly to explicit formulas using the \textit{Liouville transformation} (defined below) commonly used in the WKB analysis of Schrödinger equations\footnote{See also \cite{fenyes2020complex} for another interesting geometric use of the Liouville transformation in the context of complex projective structures.}.

\paragraph{Résumé.}
The following is a quick summary of our conventions and notations.
It is intended for the reader who is quite familiar with this story and who may therefore wish to skip the rest of this section and go directly to \autoref{210418111002}.

Fix a phase $\theta \in \Real / 2\pi\Integer$, a regular point $x_0 \in X$, and a univalued square-root branch $\sqrt{\DD_0}$ near $x_0$.
By a \dfn{Liouville transformation} we mean the following local coordinate transformation near $x_0$:
\eqn{
	z = \Phi (x)
		\coleq \int_{x_0}^x \sqrt{\DD_0 (t)} \dd{t}
		= \int_{x_0}^x \Big(\lambda_+ (t) - \lambda_- (t) \Big) \dd{t}
\fullstop
}
A \dfn{WKB $\theta$-trajectory} through $x_0$ is given locally by the equation
$\Im \big( e^{-i\theta} \Phi (x) \big) = 0$.
It is parameterised by the real number $\tau (x) \coleq \Re \big( e^{-i\theta} \Phi (x) \big)$.
It is called \dfn{complete} if it exists for all real time $\tau$.
A complete WKB $\theta$-trajectory is mapped by $\Phi$ to the entire straight line $e^{i \theta} \Real$.

A \dfn{WKB $(\theta,\pm)$-ray} emanating from $x_0$ is the part of the WKB $\theta$-trajectory corresponding to $\tau \geq 0$ or $\tau \leq 0$, respectively.
It is called \dfn{complete} if it exists for all \textit{nonnegative} real time $\tau \geq 0$ or all \textit{nonpositive} real time $\tau \leq 0$, respectively.
In particular, it is not allowed to flow into a \dfn{finite critical point} (i.e., a turning point or a simple pole of $\DD_0$).
It is mapped by $\Phi$ to the ray $e^{i\theta} \Real_\pm$.

Two special classes of complete trajectories are considered: a \dfn{closed WKB $\theta$-trajectory} which is a closed curve in $X$, and a \dfn{generic WKB $\theta$-trajectory} which tends at both ends to \dfn{infinite critical points} (i.e., poles of $\DD_0$ of order at least $2$).
A \dfn{generic WKB $(\theta, \pm)$-ray} tends to an infinite critical point at one end.

A \dfn{WKB $\theta$-strip domain} is swept out by nonclosed complete WKB $\theta$-trajectories, mapped by $\Phi$ to an infinite strip made up of straight lines parallel to $e^{i \theta} \Real$.
Fact: any generic WKB curve can always be embedded in a WKB strip domain.

A \dfn{WKB $\theta$-ring domain} is swept out by closed WKB $\theta$-trajectories, mapped by $\Phi$ to an infinite strip made up of straight lines parallel to $e^{i \theta} \Real$.
It is homeomorphic to an annulus and $\Phi$ extends to it as a multivalued local biholomorphism.
Fact: any closed WKB trajectory can always be embedded in a WKB ring domain.

A \dfn{WKB $(\theta,\pm)$-halfstrip} is swept out by WKB $(\theta,\pm)$-rays emanating from a \dfn{WKB disc} $\Phi^{-1} \set{ |z - z_1| < \epsilon}$ around a point $x_1$ near $x_0$, where $z_1 \coleq \Phi (x_1)$.
It is the preimage under $\Phi$ of a tubular neighbourhood $\set{z ~\big|~ \op{dist} \big(z, z_1 + e^{i\theta} \Real_\pm \big) < \epsilon}$.

\subsection{The Liouville Transformation}

\paragraph{}
Recall the leading-order characteristic discriminant $\DD_0 = p_0^2 - 4 q_0$ which is a holomorphic function on $X$.
Fix a phase $\theta \in \Real / 2\pi \Integer$, a regular point $x_0 \in X$, and a univalued square-root branch $\sqrt{\DD_0}$ near $x_0$.
Consider the following local coordinate transformation near $x_0$, called the \dfn{Liouville transformation}:
\eqntag{\label{210119093734}
	z = \Phi (x)
		\coleq \int_{x_0}^x \sqrt{ \DD_0 (t) } \dd{t}
\fullstop
}
The basepoint of integration $x_0$ can in principle be chosen on the boundary of $X$ or at infinity in $\Complex_x$ provided that this integral is well-defined.
This transformation is encountered in the analysis of the Schrödinger equation \eqref{210115121042} as described for example in Olver's textbook \cite[\S6.1]{MR1429619}.
However, note that our formula \eqref{210119093734} in the special case of the Schrödinger equation \eqref{210115121042} reads
\eqntag{
	\Phi (x) = \int\nolimits_{x_0}^x \sqrt{\DD_0 (t)} \dd{t} = 2 \int\nolimits_{x_0}^x \sqrt{\QQ_0 (t)} \dd{t}
\fullstop{,}
}
which differs from formula (1.05) in \cite[\S6.1]{MR1429619} by a factor of $2$.

\paragraph{}
If $\lambda_{\pm}$ are the two leading-order characteristic roots defined near $x_0$, labelled such that $\sqrt{\DD_0} = \lambda_+ - \lambda_-$, then we have the following identity relating the Liouville transformation $\Phi$ with the WKB exponents normalised at $x_0$ from \eqref{210223093541}:
\eqntag{
	\Phi 
		= \Phi_+ - \Phi_-
		= \int_{x_0}^x \Big(\lambda_+ (t) - \lambda_- (t) \Big) \dd{t}
\fullstop
}

\paragraph{}
If $U \subset X$ is any domain that can support a univalued square-root branch $\sqrt{\DD_0}$ (e.g., if $U$ is simply connected and free of turning points), then the Liouville transformation defines a (possibly multivalued) local biholomorphism $\Phi: U \too \Complex_z$.
The main utility of the Liouville transformation is that it transforms the differential operator $\frac{1}{\sqrt{\DD_0}} \del_x$ (which appears prominently in formula \eqref{210115170017} for the formal characteristic roots) into the constant-coefficient differential operator $\del_z$.
Using the language of differential geometry,
\eqntag{
	\Phi_\ast : \frac{1}{\sqrt{\DD_0}} \del_x 
		\mapstoo \del_z
\fullstop
}
This straightening-out of the local geometry using the Liouville transformation (as explained in the next subsection) will be exploited in our construction of exact WKB solutions in \autoref{210418111002}.
This point of view also makes the troublesome nature of turning points more transparent.

\newpage
\subsection{WKB Trajectories and Rays}

\paragraph{}
A \dfn{WKB $\theta$-trajectory} passing through $x_0$ is the real $1$-dimensional smooth curve on $X$ locally determined by the equation
\eqntag{
	\Im \big( e^{-i\theta} \Phi (x) \big) = 0
\fullstop{;}
\qqtext{i.e.,}
	\Im \left( \: e^{-i\theta} \int\nolimits_{x_0}^x \sqrt{\DD_0 (t)} \dd{t} \right)
	= 0
\fullstop
}
By definition, WKB trajectories are regarded as being maximal under inclusion.
The Liouville transformation $\Phi$ maps a WKB $\theta$-trajectory to a connected subset of the straight line $e^{i\theta} \Real \subset \Complex_z$.
The image is a possibly unbounded line segment $e^{i\theta} (\tau_{-}, \tau_{+}) \subset e^{i\theta} \Real$ containing the origin $0 = \Phi (x_0)$.
Maximality means that the line segment $e^{i\theta} (\tau_-, \tau_+)$ is the largest possible image.

All other nearby WKB $\theta$-trajectory can be locally described by an equation of the form $\Im \big( e^{-i\theta} \Phi (x) \big) = c$ for some $c \in \Real$.
That is, if $V \subset X$ is a simply connected neighbourhood of $x_0$ free of turning points, then a WKB $\theta$-trajectory intersecting $V$ is locally given by this equation with $c = \Im e^{-i\theta} \Phi (x_1)$ for some $x_1 \in V$.
Its image in $\Complex_z$ under $\Phi$ is an interval of the straight line $\set{z = z_1 + \xi ~\big|~ \xi \in e^{i\theta}\Real}$ containing the point $z_1 \coleq \Phi (x_1)$.

\paragraph{}
The chosen square-root $\sqrt{\DD_0}$ near $x_0$ endows the WKB $\theta$-trajectory passing through $x_0$ with a canonical parameterisation given by the real number 
\eqntag{
	\tau (x) \coleq \Re \left( e^{-i\theta}\Phi (x) \right) \in (\tau_-, \tau_+) \subset \Real
\fullstop
}
We define the \dfn{WKB $(\theta, \pm)$-ray} emanating from $x_0$ as the preimage under $\Phi$ of the line segment $e^{i\theta} [0, \tau_{+})$ or $e^{i\theta} (\tau_-, 0]$, respectively.

\paragraph{Complete WKB trajectories and rays.}
Suppose that either $\tau_{+}$ or $\tau_{-}$ is finite.
As $\tau$ approaches $\tau_{+}$ or $\tau_{-}$ respectively, the WKB trajectory either tends to a turning point or escapes to the boundary of $X$ in finite time.
If it tends to a single point on the boundary of $X$, this point is either a turning point or a simple pole of the discriminant $\DD_0$, \cite[\S10.2]{MR743423}.
For this reason, turning points and simple poles are sometimes collectively referred to as \dfn{finite critical points}.
These situations are inadmissible for the purpose of constructing exact solutions using our methods, so we introduce the following definitions.

\begin{defn}{210603173853}
A \dfn{complete WKB $\theta$-trajectory} is one for which both $\tau_{+} = + \infty$ and $\tau_{-} = - \infty$; i.e., its image in $\Complex_z$ under the Liouville transformation $\Phi$ is the entire straight line $e^{i\theta} \Real$.
A \dfn{complete WKB $(\theta,\pm)$-ray} is one for which $\tau_{\pm} = \pm \infty$, respectively; i.e, its image under $\Phi$ is the entire ray $e^{i\theta} \Real_+$ or $e^{i\theta} \Real_-$, respectively.
\end{defn}

\subsection{Closed and generic WKB trajectories}

Two classes of complete trajectories are especially important.

\paragraph{}
A \dfn{closed WKB $\theta$-trajectory} is one with the property that there is a nonzero time $\omega \in \Real$ such that $\Phi^{-1} (e^{i\theta}\omega) = \Phi^{-1} (0)$.
This only happens when the Liouville transformation is analytically continued along the trajectory to a multivalued function (WKB trajectories are smooth so they cannot have self-intersections).
Closed WKB trajectories are necessarily complete and form closed curves in $\Complex_x$, \cite[\S9.2]{MR743423}.
We refer to the smallest possible positive such $\omega \in \Real_+$ as the \dfn{trajectory period}.

Consider now a complete WKB $(\theta,\pm)$-ray emanating from $x_0$ which is not part of a closed WKB trajectory.
Its \dfn{limit set} is by definition the limit of the set 
\eqn{
	\bar{\Phi^{-1} \big( e^{i\theta} [\tau,+\infty) \big)}
\qtext{as}
	\tau \to +\infty
\qqtext{or}
	\bar{\Phi^{-1} \big( e^{i\theta} (-\infty,\tau] \big)}
\qtext{as}
	\tau \to -\infty
\fullstop
}
Obviously, this definition is independent of the chosen basepoint $x_0$ along the trajectory.
The limit set may be empty or it may contain one or more points.
If it contains a single point $x_\infty \in \Complex_x$, then this point (sometimes called an \dfn{infinite critical point}) is necessarily a pole of $\DD_0$ of order $m \geq 2$, \cite[\S10.2]{MR743423}.
A \dfn{generic WKB ray} is one whose limit set is an infinite critical point.
A \dfn{generic WKB trajectory} is one both of whose rays are generic.

\subsection{WKB Strips, Halfstrips, and Ring Domains}

\paragraph{}
For us, the model neighbourhood of a complete WKB $\theta$-trajectory is the preimage under $\Phi$ of an infinite strip in the $z$-plane, which is a subset of the form $\set{z ~\big|~ \epsilon_- < \Im (e^{-i\theta} z) < \epsilon_+}$.
Such a neighbourhood may not exist, but if it does, it is swept out by complete WKB $\theta$-trajectories.
We consider separately the situations when these complete trajectories are closed or not.

\paragraph{}
\label{210519211406}
A \dfn{WKB $\theta$-strip domain} is the preimage under $\Phi$ of an infinite strip swept out by \textit{nonclosed} trajectories.
It is simply connected and $\Phi$ maps it to an infinite strip biholomorphically.
If one WKB trajectory in a WKB strip domain is generic, then all trajectories sweeping out this strip are generic.
The main fact we need about generic WKB trajectories is that any generic WKB $\theta$-trajectory can be embedded in a WKB $\theta$-strip domain \cite[\S10.5]{MR743423}.

\paragraph{}
A \dfn{WKB $\theta$-ring domain} is an open subset of $\Complex_x$, homeomorphic to an annulus, swept out by closed WKB $\theta$-trajectories.
The restriction of the Liouville transformation $\Phi$ is a multivalued holomorphic function.
All closed WKB trajectories sweeping out a WKB ring domain have the same trajectory period.
The main fact we need about closed WKB trajectories is that any closed WKB $\theta$-trajectory can be embedded in a WKB $\theta$-ring domain \cite[\S9.3]{MR743423}.

\paragraph{}
\label{210524105411}
Similarly, the model neighbourhood of a complete WKB $(\theta,\pm)$-ray is the preimage under $\Phi$ of a tubular neighbourhood of a ray $z_0 + e^{i\theta} \Real_\pm$ for some $z_0 \in \Complex_z$.
Such a tubular neighbourhood is the subset of the form $\set{z ~\big|~ \op{dist} (z, z_0 + e^{i\theta} \Real_\pm) < \epsilon}$.
We refer to its preimage under $\Phi$ as a \dfn{WKB $(\theta, \pm)$-halfstrip domain}.
It is swept out by complete WKB $(\theta, \pm)$-rays emanating from a \dfn{WKB disc} around $x_0$ of radius $\epsilon$; i.e., the set $V = \Phi^{-1} \big( \set{ |z| < \epsilon} \big)$.
Any generic WKB $(\theta, \pm)$-ray can be embedded in a WKB $(\theta, \pm)$-halfstrip domain.

\begin{figure}[t]
\centering
\begin{subfigure}[b]{0.47\textwidth}
    \centering
    \includegraphics{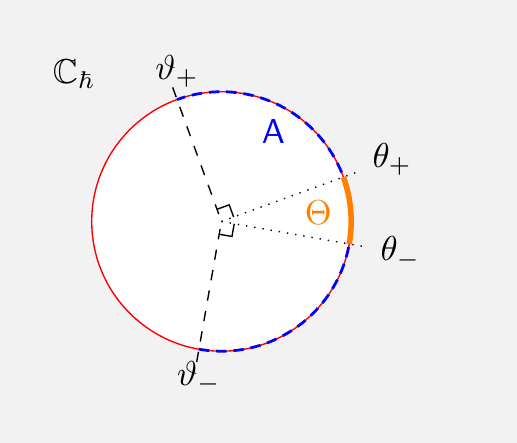}
    \caption{The arc $\Theta$ of copolar directions of $A$.}
    \label{210618101426}
\end{subfigure}
\quad
\begin{subfigure}[b]{0.47\textwidth}
    \centering
    \includegraphics{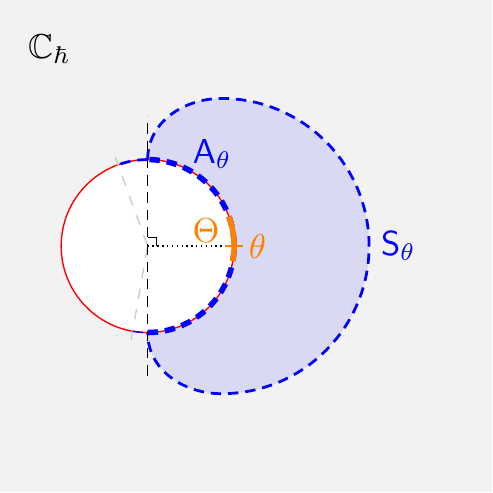}
    \caption{A Borel disc $S_\theta$ with opening $A_\theta$.}
    \label{210618105212}
\end{subfigure}
\caption{}
\end{figure}

\begin{figure}[t]
\centering
\begin{subfigure}[b]{\textwidth}
    \centering
\includegraphics[width=\textwidth]{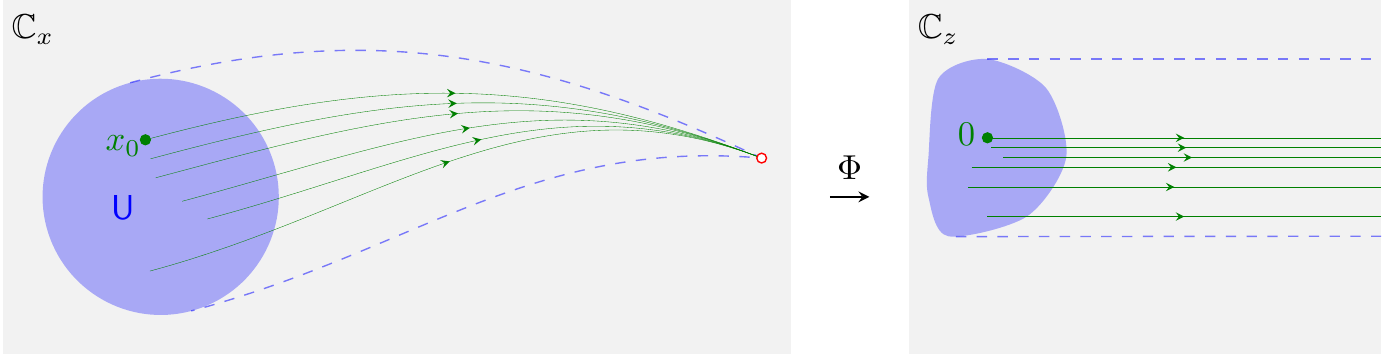}
\caption{Domain $U \subset X$ (in blue) is a simply connected neighbourhood of $x_0$ that is free of turning points.
Every WKB $(\theta,\alpha)$-ray emanating from $U$ is complete.
A few randomly chosen rays are represented by green curves, and completeness is represented by having them all flow into e.g. a pole of $\DD_0$ (red dot).}
\label{210618111906}
\end{subfigure}
\newline%
\begin{subfigure}[b]{\textwidth}
    \centering
    \includegraphics{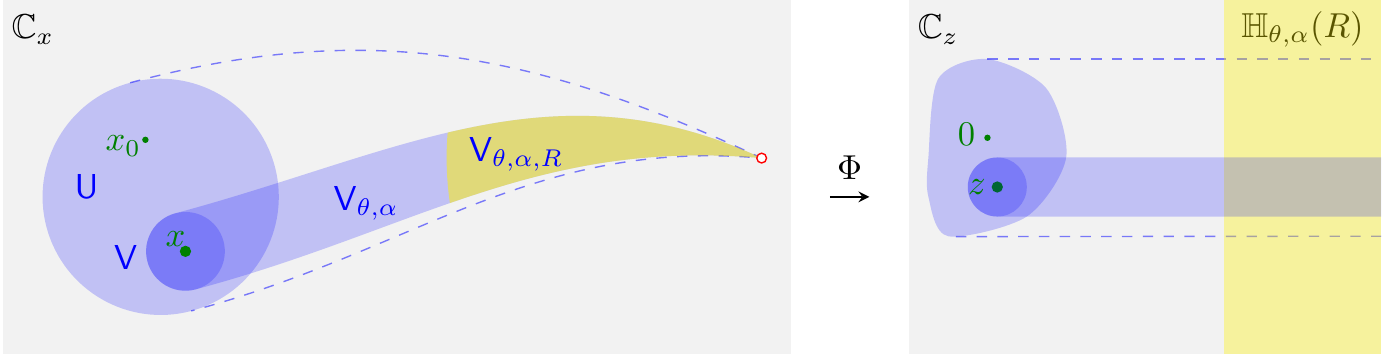}
    \caption{Every $x \in U$ has a neighbourhood $V \subset U$ which flows into the red dot.
    The full flow is denoted by $V_{\theta, \alpha}$.
    The halfplane $\mathbb{H}_{\theta, \alpha} (\RR)$ on the right is drawn in yellow, and its preimage in $V_{\theta, \alpha}$ is also drawn in yellow and thought of as a (sectorial) neighbourhood of the limit set.}
    \label{210618114017}
\end{subfigure}
\caption{}
\label{210618121411}
\end{figure}

\section{Exact WKB Solutions}
\label{210418111002}

In this section, we state and prove the main results of this paper.

\paragraph{Background assumptions.}
\label{210612115147}
We remain in the setting of \autoref{210527190843}.
Throughout this section, we also fix a regular point $x_0 \in X$ and a univalued square-root branch $\sqrt{\DD_0}$ near $x_0$.
Let $\lambda_+, \lambda_-$ be the two leading-order characteristic roots given by \eqref{210305095811} so that $\sqrt{\DD_0} = \lambda_+ - \lambda_-$.
Let $\hat{\psi}_+, \hat{\psi}_-$ be the corresponding pair of formal WKB solutions normalised at $x_0$ as guaranteed by \autoref{210118113644}.
Let $z = \Phi (x)$ be the Liouville transformation with basepoint $x_0$ given by \eqref{210119093734}.

In addition, let $\Theta \coleq [\theta_-, \theta_+]$ be the closed arc such that $A = (\theta_- - \tfrac{\pi}{2}, \theta_+ + \tfrac{\pi}{2})$; i.e., $\theta_\pm \coleq \vartheta_\pm \mp \tfrac{\pi}{2}$.
See \autoref{210618101426}.
This arc $\Theta$ is sometimes called the arc of \textit{copolar directions} of $A$.
For every $\theta \in \Theta$, let $A_\theta \subset A$ be the halfplane arc bisected by $\theta$, and let $S_\theta \subset S$ be a Borel disc bisected by $\theta$ of some diameter $\delta > 0$ (see \autoref{210618105212}):
\eqntag{\label{210601181338}
	A_\theta \coleq (\theta -\tfrac{\pi}{2}, \theta +\tfrac{\pi}{2})
\qtext{and}
	S_\theta \coleq \set{ \Re \big( \smash{e^{i\theta}} / \hbar \big) > 1/\delta}
\fullstop
}
Finally, for any $\RR > 0$, $\theta \in \Theta$, let $\mathbb{H}_{\theta, \pm} (\RR) \coleq \set{ \Re (\pm e^{i\theta} z) > \RR } \subset \Complex_z$ (see \autoref{210618114017}).

\subsection{Existence and Uniqueness of Exact WKB Solutions}
\label{210612120131}

First, we investigate our problem for a single fixed direction in $\Theta$.
The main result of this paper is then the following theorem (see \autoref{210618121411} for a visual).

\begin{thm}[\textbf{Existence and Uniqueness in a Halfplane}]{210116200501}
\leavevmode\newline
Fix a sign $\alpha \in \set{+, -}$ and a phase $\theta \in \Theta$, and let $U \subset X$ be any simply connected domain containing $x_0$ which is free of turning points and such that every WKB $(\theta,\alpha)$-ray emanating from $U$ is complete.
Assume in addition that for every point $x \in U$, there is a neighbourhood $V \subset U$ of $x$ and a sufficiently large number $\RR > 0$ such that the following two conditions are satisfied on the domain $V_{\theta,\alpha, \RR} \coleq V_{\theta,\alpha} \cap \Phi^{-1} \Big( \mathbb{H}_{\theta, \alpha} (\RR) \Big)$, where $V_{\theta,\alpha}$ is the union of all WKB $(\theta,\alpha)$-rays emanating from $V$:
\begin{enumerate}
\item $\frac{1}{\sqrt{\DD_0}} \del_x \log \sqrt{\DD_0}$ is bounded on $V_{\theta,\alpha, \RR}$;
\item $\tfrac{1}{\sqrt{\DD_0}} p \simeq \tfrac{1}{\sqrt{\DD_0}} \hat{p}
\qtext{and}
	\tfrac{1}{\DD_0} q \simeq \tfrac{1}{\DD_0} \hat{q}
\quad
\text{as $\hbar \to 0$ along $\bar{A}_\theta$, unif. $\forall x \in V_{\theta,\alpha, \RR}$.}$
\end{enumerate}

\smallskip

Then the differential equation \eqref{210115121038} has a canonical exact solution $\psi_\alpha$ on $U$ whose exponential asymptotics as $\hbar \to 0$ along $A_\theta$ are given by the formal WKB solution $\hat{\psi}_\alpha$.
Namely, there is a Borel disc $S'_\theta \subset S_\theta$ of possibly smaller diameter $\delta' \in (0, \delta]$ such that \eqref{210115121038} has a unique holomorphic solution $\psi_\alpha$ defined on $U \times S'_\theta$ that satisfies the following conditions:
\eqnstag{
\label{210516153316}
	&\psi_\alpha (x_0, \hbar) = 1
	\qquad \text{for all $\hbar \in S'_\theta$\fullstop{;}}
\\
\label{210516153319}
	&\psi_\alpha (x,\hbar) \simeq \hat{\psi}_\alpha (x,\hbar)
\quad
\text{as $\hbar \to 0$ along $\bar{A}_\theta$, loc.unif. $\forall x \in U$
\fullstop}
}
Furthermore, if the hypotheses hold for both choices of the sign $\alpha$, then $\psi_+, \psi_-$ on $U$ define a basis for the space $\mathbb{ES} (U; A)$ of all exact solutions on $(U;A)$ as well as for the space $\mathbb{GES} (U; A)$ of all exact solutions on $(U;A)$ with exponential Gevrey asymptotics.
\end{thm}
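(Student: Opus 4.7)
My plan is to reduce the second-order linear equation \eqref{210115121038} to the singularly perturbed Riccati equation \eqref{210115165557} via the classical substitution $s = -\hbar\del_x\log\psi$, and then invoke the existence and uniqueness theorem for exact solutions of the Riccati equation established in the companion paper \cite{MY2008.06492}. Concretely, once an exact Riccati solution $s_\alpha$ with asymptotic expansion $\hat{s}_\alpha$ is produced on the appropriate domain, I define the candidate exact WKB solution by the line integral
\eqn{
\psi_\alpha(x,\hbar) \coleq \exp\left(-\frac{1}{\hbar}\int_{x_0}^x s_\alpha(t,\hbar)\,\dd{t}\right)
\fullstop
}
The normalisation \eqref{210516153316} then holds automatically, and the asymptotic condition \eqref{210516153319} follows by combining $s_\alpha \simeq \hat{s}_\alpha$ (locally uniformly in Gevrey sense along $\bar A_\theta$) with the integral presentation \eqref{210115215219} of the formal WKB solution $\hat{\psi}_\alpha$.

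\textbf{Applying the Riccati theorem from \cite{MY2008.06492}.} To set up the Borel--Laplace analysis, I peel off the leading order by writing $s = \lambda_\alpha + \hbar \SS$. The Riccati equation then becomes
\eqn{
\hbar\del_x \SS = -\alpha\sqrt{\DD_0}\,\SS + \text{(quadratic in $\SS$ and $\hbar$-perturbation terms built from $p,q,\lambda_\alpha,\del_x\lambda_\alpha$)}
\fullstop
}
After dividing through by $\sqrt{\DD_0}$ and applying the Liouville transformation $z=\Phi(x)$, the left-hand side becomes the constant-coefficient operator $\hbar\del_z$ and the linear term becomes the constant $\mp 1$. This is exactly the normal form in which the Borel--Laplace fixed-point construction of \cite{MY2008.06492} operates, provided the $\hbar$-perturbation terms admit uniform Gevrey asymptotics on a halfplane $\mathbb{H}_{\theta,\alpha}(\RR)$ in the $z$-plane. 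Hypotheses~(1) and~(2) are engineered precisely for this: (1) controls the term coming from $\tfrac{1}{\sqrt{\DD_0}}\del_x\lambda_\alpha$ (equivalently, from the failure of $\sqrt{\DD_0}$ to be constant in the Liouville chart), while (2) transfers the Gevrey asymptotics of $p,q$ into Gevrey asymptotics of the Liouville-transformed coefficients. Invoking the Riccati result produces an exact solution $s_\alpha$ on $V_{\theta,\alpha,\RR} \times S'_\theta$ for some Borel subdisc $S'_\theta \subset S_\theta$, which by local uniqueness and analytic continuation along WKB $(\theta,\alpha)$-rays extends canonically to all of $U \times S'_\theta$.

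\textbf{Uniqueness and the basis property.} For the uniqueness statement, suppose $\tilde\psi_\alpha$ also satisfies \eqref{210516153316} and \eqref{210516153319}. Then $\tilde{s}_\alpha \coleq -\hbar \del_x\log\tilde\psi_\alpha$ is an exact Riccati solution on $U \times S'_\theta$ with the same asymptotic expansion $\hat s_\alpha$. Since our asymptotic hypothesis is strong (uniform up to the \emph{closed} arc $\bar A_\theta$, which spans a halfplane), Nevanlinna's theorem (\autoref{210617120300}) applied to the difference $\tilde s_\alpha - s_\alpha$ forces $\tilde s_\alpha = s_\alpha$, hence $\tilde\psi_\alpha = \psi_\alpha$ after matching at $x_0$. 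For the basis property, when both $\psi_+$ and $\psi_-$ exist, their Wronskian $W = \psi_+\del_x\psi_- - \psi_-\del_x\psi_+$ satisfies the explicit first-order ODE $\hbar\del_x W = -p W$, hence is nonvanishing on $U \times S'_\theta$; this gives linear independence over $\cal{O}(S'_\theta)$. Since both $\psi_\pm$ lie in $\mathbb{GES}(U;A) \subset \mathbb{ES}(U;A)$, which are rank-two submodules of the rank-two module $\mathbb{S}(U;A)$, the pair $\{\psi_+,\psi_-\}$ is a basis for each.

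\textbf{Main obstacle.} The principal difficulty is \emph{not} in the algebraic reduction to the Riccati equation nor in the final assembly, but in the Borel-plane fixed-point argument that produces $s_\alpha$. One must show that the Borel transform $\hat\sigma_\alpha$ converges on the whole halfplane $e^{i\theta}\mathbb{H}$, satisfies suitable exponential growth bounds, and that the resulting Laplace integral defines a function with the required Gevrey asymptotics uniformly in a neighbourhood of every point of $U$. Tracking the interplay between the Liouville coordinate change, the bookkeeping of the contour of integration for the Borel transform, and the transfer of Gevrey estimates from the coefficients through the nonlinear recursion is the technical heart of the matter; this is precisely what is carried out in \cite{MY2008.06492} and what hypotheses~(1) and~(2) are designed to invoke.
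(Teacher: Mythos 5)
Your proposal follows essentially the same route as the paper: reduce to the Riccati equation, apply a Liouville transformation to obtain constant coefficients, invoke the Borel--Laplace construction from \cite{MY2008.06492}, deduce uniqueness from Nevanlinna's theorem, and obtain the basis claim from the Wronskian. Two technical imprecisions are worth flagging.

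First, your substitution $s = \lambda_\alpha + \hbar\SS$ peels off only the leading order, so after the Liouville transformation the equation you obtain for $\SS$ has the form $\hbar\del_z \SS - \varepsilon_\alpha \SS = \tilde\AA_0 + \hbar\tilde\AA_1 \SS + \hbar \SS^2$ where the inhomogeneous term $\tilde\AA_0$ is \emph{not} $O(\hbar)$ (its leading part is essentially $-\varepsilon_\alpha s_\alpha^\pto{1}$). This does \emph{not} match the normal form $\hbar\del_z f - f = \hbar\big(\AA_0 + \AA_1 f + f^2\big)$ of \autoref{210421183535}, whose solution must have vanishing leading order. The paper's construction therefore peels off two orders ($s = \lambda_\alpha + \hbar s_\alpha^\pto{1} + \hbar^2 s_\ast$) and then rescales ($\hbar s_\ast = \varepsilon_\alpha \sqrt{\DD_0}\,\TT$), so that the unknown $\TT$ genuinely satisfies the standard form with $\TT_0 = 0$. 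Your version would either need the extra peel-off $\SS = s_\alpha^\pto{1} + \hbar \SS'$ before applying the lemma, or an explicit appeal to the more general \cite[Theorem 5.17]{MY2008.06492} that handles a nonzero leading order. Also note that hypothesis (1) controls $\tfrac{1}{\sqrt{\DD_0}}\del_x\log\sqrt{\DD_0}$, which is the term appearing in $b_1$ in the paper's bookkeeping, rather than $\tfrac{1}{\sqrt{\DD_0}}\del_x\lambda_\alpha$; they are related but not the same, and conflating them could obscure where condition (1) is actually used.

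Second, the gluing step is compressed beyond what ``analytic continuation along WKB rays'' conveys. The paper moves the basepoint of the Liouville transformation to successive points $x_i \in U$, constructs a local exact Riccati solution on each WKB disc $V'_i$ and its swept-out halfstrip, glues these via the Nevanlinna uniqueness property on overlaps, and finally extends to all of $U \times S'_\theta$ using the parametric existence theorem for linear ODEs; this is what allows one to shrink the Borel disc once and for all for any compactly contained $V \Subset U$. Similarly, in the basis argument the Wronskian ODE $\hbar\del_x W = -pW$ shows $W$ is nonvanishing only after you check $W(x_0,\hbar) = \tfrac{1}{\hbar}\big(s_+(x_0,\hbar)-s_-(x_0,\hbar)\big) \neq 0$; and linear independence of $\psi_\pm$ over $\cal{O}(S'_\theta)$ combined with membership in $\mathbb{GES}(U;A)$ does not by itself show they generate $\mathbb{ES}(U;A)$ or $\mathbb{GES}(U;A)$ as modules over the corresponding asymptotic rings---one must still argue the expansion coefficients of any exact solution in the basis $\{\psi_+,\psi_-\}$ inherit the required exponential asymptotics. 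These are small gaps, but they are where the careful work lives.
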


The strategy of the proof of this theorem is to reduce the problem to finding exact solutions to an associated singularly perturbed Riccati equation as captured by the following lemma.

\begin{lem}[\textbf{WKB ansatz and the associated Riccati equation}]{210603110237}
The solution $\psi_\alpha$ from \autoref{210116200501} is given by the following formula: for all $(x, \hbar) \in U \times S'_\theta$,
\eqntag{
\label{210218230927}
	\psi_\alpha (x, \hbar)
		= \exp \left( - \frac{1}{\hbar} \int\nolimits_{x_0}^x s_\alpha (t, \hbar) \dd{t} \right)
\fullstop{,}
}
where $s_\alpha = \lambda_\alpha + \hbar \SS_\alpha$ is an exact solution of the singularly perturbed Riccati equation 
\begin{equation}
\label{210304143646}
	\hbar \del_x s = s^2 - ps + q
\fullstop
\end{equation}
More precisely, for any compactly contained subset $V \subset U$, there is a Borel disc $S''_\theta \subset S'_\theta$ of possibly smaller diameter $\delta'' \in (0,\delta']$ such that identity \eqref{210218230927} holds for all $(x, \hbar)$ in the domain $V \times S''_\theta$.
Here, $s_\alpha$ is the unique holomorphic solution of the Riccati equation \eqref{210304143646} on $V \times S''_\theta$ which admits the formal characteristic solution $\hat{s}_\alpha$ as its uniform Gevrey asymptotic expansion along $\bar{A}_\theta$:
\eqntag{\label{210603112057}
	s_\alpha (x,\hbar) \simeq \hat{s}_\alpha (x,\hbar)
\quad
	\text{as $\hbar \to 0$ along $\bar{A}_\theta$, unif. $\forall x \in V$
\fullstop}
}
Moreover, $s_\alpha$ uniquely extends to a meromorphic function on $U \times S'_\theta$ with poles only at the zeros of $\psi_\alpha$.
\end{lem}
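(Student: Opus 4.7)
The plan is to reduce the construction of $\psi_\alpha$ to finding an exact solution of the associated Riccati equation \eqref{210304143646} and then invoke the Borel--Laplace existence results for singularly perturbed Riccati equations established in \cite{MY2008.06492}. I would first record the standard Riccati substitution: for any nonvanishing holomorphic $\psi$, setting $s \coleq -\hbar \del_x \log \psi$ converts \eqref{210115121038} into \eqref{210304143646}, and conversely, given any holomorphic solution $s$ of \eqref{210304143646} on a simply connected domain, the integral formula \eqref{210218230927} produces a well-defined holomorphic $\psi$ which satisfies \eqref{210115121038} together with the normalisation $\psi(x_0, \hbar) = 1$. A short computation verifies that $\hbar^2 \del_x^2 \psi + p \hbar \del_x \psi + q \psi = (ps - q - \hbar \del_x s + s^2) \psi$, which vanishes precisely when $s$ solves the Riccati equation.

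Next, I would verify that the hypotheses of \autoref{210116200501} translate, under the Liouville transformation $z = \Phi(x)$, into precisely the hypotheses required by the Riccati existence theorem of \cite{MY2008.06492}. Indeed, completeness of the WKB $(\theta,\alpha)$-rays emanating from $U$ provides the requisite ray-flow geometry, boundedness of $\tfrac{1}{\sqrt{\DD_0}} \del_x \log \sqrt{\DD_0}$ controls the perturbation term, and the uniform Gevrey asymptotics of $p/\sqrt{\DD_0}$ and $q/\DD_0$ along $\bar A_\theta$ provide the required input regularity. This input yields, for every simply connected $V \Subset U$ containing $x_0$, a Borel disc $S''_\theta$ and a unique holomorphic solution $s_\alpha = \lambda_\alpha + \hbar \SS_\alpha$ of \eqref{210304143646} on $V \times S''_\theta$ which admits $\hat s_\alpha$ as its uniform Gevrey asymptotic expansion along $\bar A_\theta$. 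Defining $\psi_\alpha$ by \eqref{210218230927} then produces the desired holomorphic solution of \eqref{210115121038} on $V \times S''_\theta$, and the asymptotic $\psi_\alpha \simeq \hat\psi_\alpha$ follows from term-by-term integration of the uniform Gevrey expansion of $s_\alpha$, using the identity $\hat\psi_\alpha = \exp\bigl(-\tfrac{1}{\hbar}\int_{x_0}^x \hat s_\alpha\, dt\bigr)$.

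To obtain the global $\psi_\alpha$ on $U \times S'_\theta$ claimed by \autoref{210116200501}, I would extend $\psi_\alpha$ by analytic continuation in $x$ along paths in the simply connected domain $U$; since \eqref{210115121038} is linear with coefficients holomorphic on $U \times S$, the extension exists on all of $U \times S'_\theta$ for the Borel disc $S'_\theta$ obtained from the initial patch around $x_0$. Uniqueness of $\psi_\alpha$ satisfying \eqref{210516153316}--\eqref{210516153319} then follows from Nevanlinna's theorem, which guarantees that uniform Gevrey asymptotics along a halfplane arc determine the function. The local Gevrey asymptotics on any other $V' \Subset U$ are obtained by performing the Riccati construction again on $V'$ and identifying the resulting exact WKB solution with the analytically continued $\psi_\alpha$ via the same uniqueness principle. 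Finally, the meromorphic extension of $s_\alpha$ to all of $U \times S'_\theta$ is simply $s_\alpha = -\hbar \del_x \psi_\alpha / \psi_\alpha$, whose poles are exactly the zeros of $\psi_\alpha$.

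The principal obstacle is the careful matching of the WKB-geometric hypotheses of \autoref{210116200501} with the ray-flow hypotheses of the Riccati theorem from \cite{MY2008.06492}, which must be performed through the Liouville transformation and the substitution $s = \lambda_\alpha + \hbar \SS_\alpha$ that isolates the perturbative part of the Riccati solution. A secondary subtlety is that the Borel disc produced by the Riccati construction a priori depends on the local patch $V$, whereas the statement requires a single global Borel disc $S'_\theta$ valid on all of $U \times S'_\theta$; this is reconciled by noting that the linear ODE propagates $\psi_\alpha$ in the $x$-direction without any shrinkage in $\hbar$, so the Borel disc attached to the initial patch around $x_0$ already suffices globally even though the Riccati representation $s_\alpha = -\hbar \del_x \log \psi_\alpha$ need only be holomorphic (rather than merely meromorphic) on the smaller polydisc $V \times S''_\theta$.
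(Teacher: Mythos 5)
Your proposal takes essentially the same route as the paper: reduce via the WKB ansatz to the Riccati equation, convert to standard form through the Liouville transformation and the substitution isolating $\lambda_\alpha$, invoke the Borel--Laplace existence theorem for Riccati equations from \cite{MY2008.06492} (the Main Technical Lemma), patch solutions over compactly contained subsets by the Nevanlinna uniqueness principle, and extend $\psi_\alpha$ to all of $U\times S'_\theta$ via linear ODE theory while letting $s_\alpha = -\hbar\del_x\log\psi_\alpha$ continue meromorphically. (One small slip: the parenthetical computation should read $\hbar^2\del_x^2\psi + p\hbar\del_x\psi + q\psi = \bigl(-\hbar\del_x s + s^2 - ps + q\bigr)\psi$ --- your signs on the $ps$ and $q$ terms are flipped --- but this does not affect the strategy.)
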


Thus, our proof of \autoref{210116200501} mainly rests on the ability to construct a unique exact solution of the Riccati equation \eqref{210304143646}.
In \cite[Theorem 5.17]{MY2008.06492}, we proved a general existence and uniqueness theorem for exact solutions of a singularly perturbed Riccati equation.
This result and the sketch of its proof (specialised to our situation at hand) is presented in \autoref{210525082957}.
The proof of \autoref{210603110237} and therefore of \autoref{210116200501} is then presented in \autoref{210525083210}.

\begin{defn}{210523120000}
The solution $\psi_\alpha$ from \autoref{210116200501} is called an \dfn{exact WKB solution} normalised at $x_0$.
The basis $\set{\psi_+, \psi_-}$ is called the \dfn{exact WKB basis} normalised at $x_0$.
We will also refer to the exact solution $s_\alpha$ of the Riccati equation \eqref{210304143646} as an \dfn{exact characteristic root} for the differential equation \eqref{210115121038}.
\end{defn}

\begin{example}[Mildly deformed coefficients]{210614221544}
If the coefficients $p,q$ of the differential equation \eqref{210115121038} are at most polynomial in $\hbar$ (i.e., if $p,q \in \cal{O} (X) [\hbar]$) then condition (2) of \autoref{210116200501} simplifies considerably and can be replaced by the following equivalent condition: for every $k \geq 0$, the functions $p_k (x)$ and $q_k (x)$ are bounded on $V_{\theta, \alpha, \RR}$ respectively by $\sqrt{\DD_0 (x)}$ and $\DD_0 (x)$.
For the Schrödinger equation \eqref{210115121042} with $\hbar$-independent potential (i.e., with $p = 0$ and $q = -\QQ$ where $\QQ (x, \hbar) = \QQ_0 (x)$), condition (2) of \autoref{210116200501} is vacuous.
\end{example}

\paragraph{Analytic continuation to larger domains in $\Complex_x$.}
Using the usual Parametric Existence and Uniqueness Theorem for linear ODEs (see, e.g., \cite[Theorem 24.1]{MR0460820}), exact WKB solutions can be analytically continued anywhere in $X$, yielding the following statement.

\begin{cor}{210524175026}
For any simply connected domain $X' \subset X$ containing $x_0$, the exact WKB solution $\psi_\alpha \in \mathbb{ES} (U \times S'_\theta)$ from \autoref{210116200501} extends to a holomorphic solution on $X' \times S'_\theta$.
In fact, it is the unique holomorphic solution of \eqref{210115121038} on $X' \times S'_\theta$ satisfying the following initial conditions for all $\hbar \in S'_\theta$:
\begin{equation}
	\psi_\alpha (x_0, \hbar) = 1
\qqtext{and}
	\hbar \del_x \psi_\alpha (x_0, \hbar) = s_\alpha (x_0, \hbar)
\fullstop
\end{equation}
If $\psi_+, \psi_- \in \mathbb{ES} (U \times S'_\theta)$ is a WKB basis from \autoref{210116200501}, it defines a basis for the space $\mathbb{S} (X' \times S'_\theta)$ of all holomorphic solutions on $X' \times S'_\theta$.
\end{cor}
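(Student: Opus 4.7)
The plan is to extend $\psi_\alpha$ holomorphically from $U \times S'_\theta$ to all of $X' \times S'_\theta$ by combining the classical parametric existence and uniqueness theorem for linear ODEs (\cite[Theorem 24.1]{MR0460820}) with the monodromy theorem on the simply connected domain $X'$. The essential input is that both $\psi_\alpha(x_0, \hbar)$ and $\hbar \del_x \psi_\alpha(x_0, \hbar)$ are holomorphic functions of $\hbar$ on the entire Borel disc $S'_\theta$: the former equals $1$ by \eqref{210516153316}, and the latter is determined by the value $s_\alpha(x_0, \hbar)$ of the exact characteristic root via the logarithmic derivative of \eqref{210218230927}. By \autoref{210603110237}, $s_\alpha(x_0, \hbar)$ is indeed holomorphic on $S'_\theta$.

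Given this initial data, the parametric theorem produces, along each path $\gamma \subset X'$ starting at $x_0$, a unique holomorphic solution of \eqref{210115121038} with the prescribed initial values at $x_0$ and with holomorphic dependence on $\hbar \in S'_\theta$. Because $p, q$ are holomorphic on all of $X \times S$, nothing obstructs continuation along any path in $X'$. Simple connectedness of $X'$ then lets one invoke the monodromy theorem to assemble these path-germs into a single-valued holomorphic function $\tilde{\psi}_\alpha$ on $X' \times S'_\theta$. On the overlap $U \cap X'$ the functions $\psi_\alpha$ and $\tilde{\psi}_\alpha$ satisfy the same initial conditions at $x_0$, hence coincide by linear ODE uniqueness; so $\tilde{\psi}_\alpha$ genuinely extends $\psi_\alpha$. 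The uniqueness assertion of the corollary is then the uniqueness part of the parametric theorem applied on $X'$.

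For the basis statement, since $\mathbb{S}(X' \times S'_\theta)$ is a free $\cal{O}(S'_\theta)$-module of rank two, it suffices to check that the Wronskian $W \coleq \psi_+ \del_x \psi_- - \psi_- \del_x \psi_+$ is nowhere vanishing on $X' \times S'_\theta$. Abel's identity gives $\hbar \del_x W = -p W$, so $W$ never vanishes once it is nonzero at $x_0$. A direct evaluation at $x_0$ using the prescribed initial conditions yields $\hbar W(x_0, \hbar) = \pm \bigl( s_+(x_0, \hbar) - s_-(x_0, \hbar) \bigr)$, whose leading-order value is $\pm \bigl( \lambda_+(x_0) - \lambda_-(x_0) \bigr) = \pm \sqrt{\DD_0(x_0)}$; since $x_0$ is a regular point this is nonzero, and so $W(x_0, \hbar) \neq 0$ throughout a possibly shrunken Borel disc. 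No serious obstacle arises in any of these steps: the main analytic difficulty, the construction of $\psi_\alpha$ on $U \times S'_\theta$, has already been handled by \autoref{210116200501}, and everything remaining is a standard application of holomorphic parameter-dependent linear ODE theory.
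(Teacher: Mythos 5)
Your proof is correct and follows the paper's route exactly: the paper discharges the extension and uniqueness claims in the paragraph preceding the corollary by citing the classical parametric existence-and-uniqueness theorem for linear ODEs, and the Wronskian nonvanishing argument (used for the basis claim) appears verbatim in Step 5 of the proof of \autoref{210116200501}, where the leading-order value $\sqrt{\DD_0(x_0)}$ at $x_0$ is used precisely as you do. Your explicit caveat that $S'_\theta$ may need to be shrunk so that $W(x_0,\cdot)$ is nonvanishing \emph{throughout} the Borel disc (not merely for small $\hbar$) is a small refinement the paper leaves implicit, but otherwise the two arguments coincide.
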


However, beware that the asymptotic property \eqref{210516153319} of exact WKB solutions is not necessarily continued outside the domain $U$, and indeed in general there are no global exact solutions even if $X$ is simply connected and free of turning points.

\paragraph{Existence and uniqueness in wider sectors.}
Now we extend \autoref{210116200501} to the full arc $A$.
Without loss of generality, we can assume that $S \coleq \Cup_{\theta \in \Theta} S_\theta$ is the union of Borel discs $S_\theta = \set{ \Re \big( \smash{e^{i\theta}} / \hbar \big) > 1/\delta}$, one for each bisecting direction $\theta \in \Theta$, of some diameter $\delta > 0$ independent of $\theta$.

\begin{thm}{210527195440}
Fix a sign $\alpha \in \set{+, -}$, and let $U \subset X$ be any simply connected domain containing $x_0$ which is free of turning points and has the following property: for all $\theta \in \Theta$, every WKB $(\theta,\alpha)$-ray emanating from $U$ is complete.
Assume in addition that for every point $x \in U$, there is a neighbourhood $V \subset U$ of $x$ and a sufficiently large number $\RR > 0$ such that the following two conditions are satisfied on the domain 
$V_{\Theta,\alpha, \RR} \coleq V_{\Theta,\alpha} \cap \Phi^{-1} \Big( ~ \Cup_{\theta \in \Theta} \mathbb{H}_{\theta, \alpha, \RR} \Big)$, where $V_{\Theta,\alpha}$ is the union of all WKB $(\theta,\alpha)$-rays emanating from $V$ for all $\theta \in \Theta$:
\begin{enumerate}
\item $\frac{1}{\sqrt{\DD_0}} \del_x \log \sqrt{\DD_0}$ is bounded on $V_{\Theta,\alpha, \RR}$;
\item $\tfrac{1}{\sqrt{\DD_0}} p \simeq \tfrac{1}{\sqrt{\DD_0}} \hat{p}
\qtext{and}
	\tfrac{1}{\DD_0} q \simeq \tfrac{1}{\DD_0} \hat{q}
\quad
\text{as $\hbar \to 0$ along $\bar{A}$, unif. $\forall x \in V_{\Theta,\alpha, \RR}$.}$
\end{enumerate}

\smallskip

Then the differential equation \eqref{210115121038} has a canonical exact solution $\psi_\alpha$ on $U$ whose exponential asymptotics as $\hbar \to 0$ along $A$ are given by the formal WKB solution $\hat{\psi}_\alpha$.
Namely, there is a sectorial subdomain
\eqn{
	S' \coleq \Cup_{\theta \in \Theta} S'_\theta \subset S
\qtext{with}
	S'_\theta \coleq \set{ \Re \big( \smash{e^{i\theta}} / \hbar \big) > 1/\delta'}
}
for some $\delta' \in (0, \delta]$ such that the differential equation \eqref{210115121038} has a unique holomorphic solution $\psi_\alpha$ defined on $U \times S'$ that satisfies the following conditions:
\eqnstag{
\label{210527201005}
	&\psi_\alpha (x_0, \hbar) = 1
	\qquad \text{for all $\hbar \in S'$\fullstop{;}}
\\
\label{210527201013}
	&\psi_\alpha (x,\hbar) \simeq \hat{\psi}_\alpha (x,\hbar)
\quad
\text{as $\hbar \to 0$ along $\bar{A}$, loc.unif. $\forall x \in U$
\fullstop}
}
Furthermore, $\psi_\alpha$ is again given by the formula \eqref{210218230927} for all $(x,\hbar) \in U \times S'$ where $s_\alpha$ is the unique exact solution on $U$ of the Riccati equation \eqref{210304143646} with leading-order $\lambda_\alpha$.
\end{thm}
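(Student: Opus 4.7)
The plan is to produce $\psi_\alpha$ by applying the halfplane result \autoref{210116200501} separately for every direction $\theta \in \Theta$ and then stitching the resulting family of local exact solutions together by means of the uniqueness built into \autoref{210603110237}.

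First, I would check that the hypotheses of \autoref{210116200501} hold for every fixed $\theta \in \Theta$. Both conditions follow directly from the analogous full-arc conditions assumed here, since $V_{\theta,\alpha,\RR} \subset V_{\Theta,\alpha,\RR}$ and $\bar{A}_\theta \subset \bar{A}$. Applying \autoref{210116200501} thus yields, for each $\theta \in \Theta$, an exact WKB solution $\psi_\alpha^{(\theta)}$ on $U \times S'_\theta$ of some diameter $\delta'_\theta > 0$, and via \autoref{210603110237} an associated exact characteristic root $s_\alpha^{(\theta)}$ on $V \times S''_\theta$ for each compactly contained $V \Subset U$.

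Second, I would extract a uniform diameter $\delta' > 0$, independent of $\theta$. The diameter $\delta'_\theta$ produced by \autoref{210116200501} is controlled by the Borel--Laplace estimates of \cite{MY2008.06492}, whose ingredients (the bounds on $p, q$, the constants in the asymptotic assumption (2), the number $\RR$, and the geometry of the domain $V_{\theta,\alpha,\RR}$) vary continuously with $\theta$. Compactness of the closed arc $\Theta$ then gives $\delta' \coleq \inf_{\theta \in \Theta} \delta'_\theta > 0$, producing the sectorial subdomain $S' = \Cup_{\theta \in \Theta} S'_\theta$ appearing in the statement.

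Third, I would glue. For any two directions $\theta, \theta' \in \Theta$ close enough that $S'_\theta \cap S'_{\theta'} \neq \emptyset$, the functions $s_\alpha^{(\theta)}$ and $s_\alpha^{(\theta')}$ are holomorphic solutions of the Riccati equation \eqref{210304143646} on the overlap, both admitting $\hat{s}_\alpha$ as their Gevrey asymptotic expansion. Their difference is therefore holomorphic on $V \times (S'_\theta \cap S'_{\theta'})$ and Gevrey-flat along $\bar{A}_\theta$ and $\bar{A}_{\theta'}$; a Phragmén--Lindelöf type argument applied to an intermediate Borel disc $S'_{\theta''}$ with $\theta'' = \tfrac{1}{2}(\theta + \theta')$, together with Nevanlinna's theorem as in \cite[Theorem B.11]{MY2008.06492}, forces this difference to vanish. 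Hence the family $\{s_\alpha^{(\theta)}\}_{\theta \in \Theta}$ patches to a single exact characteristic root $s_\alpha$ on $V \times S'$; exhausting $U$ by compact $V$'s gives $s_\alpha$ on $U \times S'$. The function $\psi_\alpha$ defined by \eqref{210218230927} is then holomorphic on $U \times S'$, satisfies \eqref{210527201005}, and inherits the asymptotic expansion $\hat{\psi}_\alpha$ along each $\bar{A}_\theta$, hence along $\bar{A} = \Cup_{\theta \in \Theta} \bar{A}_\theta$, yielding \eqref{210527201013}. Uniqueness on $U \times S'$ is automatic: any other solution satisfying \eqref{210527201005}--\eqref{210527201013} would, upon restriction to $U \times S'_\theta$, coincide with $\psi_\alpha^{(\theta)}$ by the uniqueness part of \autoref{210116200501}.

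The principal technical obstacle is the second step: the uniform positivity of $\delta'$. One must inspect the explicit Borel--Laplace estimates from \cite{MY2008.06492} underpinning \autoref{210116200501} and verify that their dependence on the direction $\theta$ is continuous (or at least lower-semicontinuous with a positive bound), so that compactness of $\Theta$ delivers a strictly positive infimum. The third step is conceptually the heart of the proof but technically routine once this uniformity is in place, relying on the same Nevanlinna-type rigidity that underlies the halfplane uniqueness.
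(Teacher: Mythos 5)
Your plan follows the paper's proof exactly: apply the halfplane existence--uniqueness theorem \autoref{210116200501} direction by direction, glue the resulting family $\{\psi_\alpha^{(\theta)}\}_{\theta\in\Theta}$ by uniqueness, and invoke compactness of the closed arc $\Theta$ to extract a $\theta$-independent Borel disc diameter $\delta'$. One small imprecision shared with the paper's (even terser) proof: the naive overlap $S'_\theta\cap S'_{\theta'}$ of two Borel discs is a sectorial domain of opening strictly less than $\pi$ and hence contains \emph{no} Borel disc bisected by $\theta''=\tfrac12(\theta+\theta')$, so one cannot apply Nevanlinna's theorem on the overlap directly as you suggest; the missing observation, implicit in both write-ups, is that the Borel--Laplace construction underlying \autoref{210421183535} actually extends each $\psi_\alpha^{(\theta)}$ to a sectorial domain of opening strictly greater than $\pi$ (the Borel transform lives on a tubular neighbourhood of the ray, so the Laplace contour can be tilted), inside which an intermediate Borel disc does fit and on which the Nevanlinna comparison becomes valid.
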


\begin{proof}
Let $V$ be a neighbourhood of $x_0$ with all the properties in the hypothesis.
For every $\theta \in \Theta$, let $\psi_{\theta, \alpha}$ be the exact WKB solution on $V \times S'_\theta$ guaranteed by \autoref{210116200501}.
Then we can use the uniqueness property to argue that all $\psi_{\theta, \alpha}$ `glue together' to the desired exact WKB solution $\psi_\alpha$.
Indeed, for any $\theta_1, \theta_2 \in \Theta$ with $|\theta_1 - \theta_2| < \pi$, $S'_{\theta_1} \cap S'_{\theta_2} \neq \emptyset$.
By uniqueness, $\psi_{\theta_1, \alpha}$ and $\psi_{\theta_2, \alpha}$ must agree for all $\hbar \in S'_{\theta_1} \cap S'_{\theta_2} \neq \emptyset$ and all $x \in V$, and therefore extend to $V \times (S'_{\theta_1} \cup S'_{\theta_2})$.
Since $\Theta$ is closed, each $S'_{\theta}$ can be chosen to have the same diameter $\delta' \in (0, \delta]$.
\end{proof}

\begin{rem}{210603101957}
\autoref{210116200501} is a special case of \autoref{210527195440} with $\Theta = \set{\theta}$.
\end{rem}

\subsection{Borel Summability of WKB Solutions}
\label{210418171353}

In this subsection, we translate \autoref{210116200501} and its method of proof into the language of Borel-Laplace theory, the basics of which are briefly recalled in \autoref{210616130753}.
Namely, it follows directly from our construction that the exact WKB solutions are the Borel resummation of the corresponding formal WKB solutions.
In what follows, we make this statement precise and explicit.

\paragraph{}
Recall that we write the formal WKB solution $\hat{\psi}_\alpha$ as
\eqnstag{\label{210618125736}
	\hat{\psi}_\alpha (x, \hbar) 
		&= e^{-\Phi_\alpha (x) / \hbar} \; \hat{\Psi}_\alpha (x, \hbar)
\\
\label{210618125738}
		&= \exp \left( - \frac{1}{\hbar} 
				\int\nolimits_{x_0}^x \lambda_\alpha (t) \dd{t} \right)
			\exp \left( 
				- \int\nolimits_{x_0}^x \hat{\SS}_\alpha (t, \hbar) \dd{t} \right)
\fullstop{,}
}
The main result in this subsection is the following theorem.

\begin{thm}{210221112347}
Assume all the hypotheses of \autoref{210116200501}.
The exact WKB solution $\psi_\alpha$ on $U \times S'_\theta$ is the locally uniform Borel resummation in the direction $\theta$ of the formal WKB solution $\hat{\psi}_\alpha$ on $U$: for all $x \in U$ and all sufficiently small $\hbar \in S'_\theta$,
\eqntag{\label{210221151122}
\begin{aligned}
	\psi_\alpha (x, \hbar)
		&= \cal{S}_\theta \big[ \, \hat{\psi}_\alpha \, \big] (x, \hbar)
\\
		&= \exp \left( - \frac{1}{\hbar} \int\nolimits_{x_0}^x \lambda_\alpha (t) \dd{t} \right)
		\exp \left( 
			- \int\nolimits_{x_0}^x \cal{S}_\theta \big[ \, \hat{\SS}_\alpha  \, \big] (t, \hbar) \dd{t} \right)
\fullstop
\end{aligned}
}
To be more precise, for every compactly contained subset $V \subset U$, there is a Borel disc $S''_\theta \subset S'_\theta$ of possibly smaller diameter $\delta'' \in (0,\delta']$ such that identity \eqref{210221151122} is valid uniformly for all $(x, \hbar) \in V \times S''_\theta$.
\end{thm}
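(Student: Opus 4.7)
The plan is to derive identity \eqref{210221151122} from Lemma \ref{210603110237} by importing the Borel-summability statement for the associated Riccati equation from \cite{MY2008.06492} and then interchanging the $t$-integration with the Borel-Laplace summation in $\hbar$. Concretely, for any compactly contained $V \subset U$, Lemma \ref{210603110237} yields a Borel disc $S''_\theta$ such that for all $(x,\hbar) \in V \times S''_\theta$,
\[
\psi_\alpha(x,\hbar) = \exp\!\left( -\tfrac{1}{\hbar}\int_{x_0}^x \lambda_\alpha(t)\,dt \right)\exp\!\left( -\int_{x_0}^x \mathcal{S}_\alpha(t,\hbar)\,dt \right),
\]
where $s_\alpha = \lambda_\alpha + \hbar\,\mathcal{S}_\alpha$ is the unique exact Riccati solution with uniform Gevrey asymptotics $\mathcal{S}_\alpha \simeq \hat{\mathcal{S}}_\alpha$ along $\bar{A}_\theta$, uniformly for $t \in V$.

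The crux is the identification $\mathcal{S}_\alpha(t,\hbar) = \mathcal{S}_\theta\!\left[\hat{\mathcal{S}}_\alpha\right](t,\hbar)$, uniformly on $V \times S''_\theta$. This identification is not an additional claim but rather the defining feature of the construction invoked in Lemma \ref{210603110237}: the exact Riccati solution $s_\alpha$ of \cite{MY2008.06492} is \emph{built} as the Laplace transform in the direction $\theta$ of the Borel transform $\hat{\sigma}_\alpha \coleq \hat{\mathcal{B}}[\hat{\mathcal{S}}_\alpha]$, which by Proposition \ref{210603121655} is already holomorphic near $\xi = 0$ and which the analysis of \cite{MY2008.06492} extends along the ray $e^{i\theta}\mathbb{R}_+$ with appropriate exponential bounds uniformly in $t \in V$. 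Combined with the uniqueness clause of the Nevanlinna-type theorem underlying Borel resummation (cf.\ \autoref{210617120300}), the asymptotic statement \eqref{210603112057} then pins $\mathcal{S}_\alpha$ down as $\mathcal{S}_\theta[\hat{\mathcal{S}}_\alpha]$.

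With this identification in hand, the only remaining step is to commute the $t$-integral with the Laplace integral defining $\mathcal{S}_\theta[\hat{\mathcal{S}}_\alpha]$. Using the uniform-in-$t$ exponential bounds on $\hat{\sigma}_\alpha$ along $e^{i\theta}\mathbb{R}_+$ just mentioned, Fubini's theorem gives
\[
\int_{x_0}^x \mathcal{S}_\theta\!\left[\hat{\mathcal{S}}_\alpha\right](t,\hbar)\,dt = \int_0^{e^{i\theta}\infty} e^{-\xi/\hbar} \left( \int_{x_0}^x \hat{\sigma}_\alpha(t,\xi)\,dt \right) d\xi,
\]
whose right-hand side is the Laplace transform of the $t$-integrated Borel transform, i.e.\ the Borel resummation along $\theta$ of $\int_{x_0}^x \hat{\mathcal{S}}_\alpha(t,\hbar)\,dt$. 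Substituting this into the display for $\psi_\alpha$ yields \eqref{210221151122} on $V \times S''_\theta$. The main obstacle here is technically invisible because it is absorbed into the citation: one needs the endless continuation and uniform exponential bounds on $\hat{\sigma}_\alpha$ along $e^{i\theta}\mathbb{R}_+$ to hold \emph{uniformly} for $t$ in the compact set $V$, which is precisely the content exported from \cite{MY2008.06492} (possibly at the cost of shrinking $S''_\theta$). Given this technical input, the rearrangement of integrals and the recognition of the result as a Borel resummation are routine.
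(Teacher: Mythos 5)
Your proposal is correct and follows the paper's route: the paper states that this theorem, together with the four lemmas it packages, follows immediately from the proof of \autoref{210116200501}, where the exact Riccati solution is constructed by the Borel--Laplace method and hence is a Borel resummation by construction (cf.\ property (P3) of \autoref{210522171904}); the remaining steps commute $\cal{S}_\theta$ with $x$-integration and exponentiation exactly as you do. One small imprecision worth flagging: the object literally built as $\Laplace_\theta$ of an analytically continued Borel transform is the solution $f$ of the Riccati equation in \emph{standard} form, after the change of unknown and the Liouville transformation, not $\SS_\alpha$ itself --- $\SS_\alpha$ is recovered from $f$ by an algebraic change of variables and composition with $\Phi^{-1}$. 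This does not create a gap, because your appeal to Nevanlinna uniqueness (\autoref{210617120300}) already forces $\SS_\alpha = \cal{S}_\theta\big[\hat{\SS}_\alpha\big]$ from the uniform Gevrey asymptotics \eqref{210603112057} of \autoref{210603110237} alone, independently of how $\SS_\alpha$ was produced.
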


Thus, exact and formal WKB solutions can be thought of as being canonically specified by their asymptotic expansions, and hence in some sense `identified'.
This is the reason that the vast majority of literature in WKB analysis speaks simply of ``WKB solutions'' without specifying whether the exact or the formal object is in question.
However, it is important to stress that this `identification' is not global and highly depends on the location in $\Complex_x$.

\autoref{210221112347} packs a lot of information, which we now unpack by breaking it down into a sequence of four lemmas (\autoref{210603145334}, \ref{210603150108}, \ref{210603150616}, and \ref{210603152031}), all of which follow immediately from the proof of \autoref{210116200501}.

\paragraph{The formal Borel transform.}
Recall that the formal WKB solution $\hat{\psi}_\alpha$ is an exponential power series on $U$; in symbols, $\hat{\psi}_\alpha \in \cal{O}^{\exp} (U) \bbrac{\hbar}$.
Instead of considering directly the Borel transform $\hat{\Borel} \big[ \, \hat{\psi}_\alpha \, \big]$ of $\hat{\psi}_\alpha$, it is more convenient to consider the following \textit{exponentiated formal Borel transform} of $\hat{\psi}_\alpha$, which we define as
\eqntag{\label{210526162424}
	\hat{\Borel}^{\exp} \big[ \, \hat{\psi}_\alpha \, \big] (x, \xi)
		\coleq \exp \left( - \frac{1}{\hbar} 
			\int\nolimits_{x_0}^x \lambda_\alpha (t) \dd{t} \right)
		\exp \left( 
			- \int\nolimits_{x_0}^x \hat{\Borel} \big[ \, \hat{\SS}_\alpha \, \big] (t, \xi) \dd{t} \right)
\fullstop
}
Note that $\hat{\Borel} \big[ \, \hat{\psi}_\alpha \, \big] \neq \hat{\Borel}^{\exp} \big[ \, \hat{\psi}_\alpha \, \big]$ because the Borel transform converts multiplication into convolution.
Denote the formal Borel transform of $\hat{\SS}_\alpha$ by
\eqntag{\label{210526162605}
	\hat{\sigma}_\alpha (x, \xi)
		\coleq \hat{\Borel} \big[ \, \hat{\SS}_\alpha \, \big] (x, \xi)
		= \sum_{k=0}^\infty s_{k+2}^\pto{i} (x) \frac{\xi^k}{k!}
\fullstop
}

\begin{lem}[\textbf{Convergence of the formal Borel transform}]{210603145334}
The formal Borel transform $\hat{\sigma}_\alpha$ is a locally uniformly convergent power series in $\xi$.
Consequently, the exponentiated formal Borel transform $\hat{\Borel}^{\exp} \big[ \, \hat{\Psi}_\alpha \, \big]$ and hence the ordinary formal Borel transform $\hat{\Borel} \big[ \, \hat{\Psi}_\alpha \, \big]$ are locally uniformly convergent power series in $\xi$.
In symbols, $\hat{\sigma}_\alpha, \hat{\Borel}^{\exp} \big[ \, \hat{\Psi}_\alpha \, \big], \hat{\Borel} \big[ \, \hat{\Psi}_\alpha \, \big] \in \cal{O} (U) \set{\xi}$.
\end{lem}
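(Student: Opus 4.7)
The content of the lemma is essentially a corollary of the construction of the exact characteristic root $s_\alpha$ carried out during the proof of \autoref{210603110237}, as developed in \cite[\S 5]{MY2008.06492}. My plan is to read the convergence of $\hat{\sigma}_\alpha$ directly off that construction and then deduce the two remaining assertions by elementary formal manipulations.

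\emph{Step 1 (the only substantive step).}
The proof of \autoref{210603110237} applies the formal Borel transform to the singularly perturbed Riccati equation \eqref{210304143646}, converting it into a nonlinear convolution integral equation, and solves the latter by a fixed-point argument in the space of functions holomorphic on a neighbourhood of $\xi = 0$ locally uniformly in $x \in U$. The resulting solution $\sigma_\alpha (x, \xi)$ has Taylor series at $\xi = 0$ agreeing term-by-term with $\hat{\sigma}_\alpha$ in \eqref{210526162605}. In particular $\hat{\sigma}_\alpha$ converges near $\xi = 0$ with positive radius of convergence locally uniform in $x$; that is, $\hat{\sigma}_\alpha \in \cal{O} (U) \set{\xi}$.

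\emph{Step 2.}
From its definition, the exponentiated formal Borel transform of $\hat{\Psi}_\alpha$ is
\eqn{
	\hat{\Borel}^{\exp} [\hat{\Psi}_\alpha] (x, \xi)
		= \exp \left( - \int_{x_0}^x \hat{\sigma}_\alpha (t, \xi) \dd{t} \right).
}
Termwise integration of $\hat{\sigma}_\alpha$ in $t$ along any fixed path in $U$ preserves both the local-uniform radius of convergence in $\xi$ and the holomorphy of the coefficients in $x$. Composition with the entire function $\exp$ then likewise yields a power series in $\xi$ with positive radius of convergence and holomorphic coefficients in $x$. Hence $\hat{\Borel}^{\exp} [\hat{\Psi}_\alpha] \in \cal{O} (U) \set{\xi}$.

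\emph{Step 3.}
Convergence of $\hat{\Borel}^{\exp} [\hat{\Psi}_\alpha]$ in $\xi$ is equivalent to $\hat{\Psi}_\alpha$ being a locally uniformly Gevrey-1 power series in $\hbar$ on $U$, i.e.\ its coefficients $\Psi_\alpha^\pto{n}$ are bounded on compacta by a constant times $n!\, \MM^n$. The ordinary formal Borel transform of any such series is, by definition, a locally uniformly convergent power series in $\xi$, so $\hat{\Borel} [\hat{\Psi}_\alpha] \in \cal{O} (U) \set{\xi}$.

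\emph{Main obstacle.}
All the genuine work is concentrated in Step 1, which is a Borel-summability statement for a singularly perturbed nonlinear ODE rather than a formal manipulation. Its proof in \cite[\S 5]{MY2008.06492} leans essentially on hypotheses (1) and (2) of \autoref{210116200501} on $p$ and $q$ together with the completeness of the WKB $(\theta,\alpha)$-rays emanating from $U$; both are used to obtain the estimates that close the fixed-point argument for the convolution equation satisfied by $\sigma_\alpha$. Once Step 1 is granted, Steps 2 and 3 are entirely formal and amount to standard closure properties of the algebra of locally uniformly convergent power series under integration, composition with entire functions, and Borel transform of Gevrey-1 series.
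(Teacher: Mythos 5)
Your proposal is correct and matches the paper's own (very brief) justification: the paper simply declares that this lemma, together with Lemmas \ref{210603150108}--\ref{210603152031}, ``follow immediately from the proof of \autoref{210116200501}'', and your Step 1 correctly traces that dependency to the Borel--Laplace construction of the exact characteristic root (ultimately \autoref{210421183535}), with Steps 2--3 being routine closure properties of $\cal{O}(U)\set{\xi}$ and $\cal{G}(U)\bbrac{\hbar}$. One imprecision in your description of the upstream mechanism, worth correcting though it does not affect the argument: the paper applies the \emph{analytic} (not formal) Borel transform, and only after first reducing \eqref{210304143646} to the standard form \eqref{210418182218} via the Liouville coordinate; it then solves the resulting convolution PDE by successive approximations with the bounds $|\phi_n|\leq \AA\BB^n\tfrac{|\xi|^n}{n!}e^{\LL|\xi|}$, and it is these bounds — together with Nevanlinna's theorem identifying the Taylor series of $\phi$ at $\xi=0$ with the formal Borel transform of $\hat f$ — that yield the positive, locally uniform radius of convergence of $\hat\sigma_\alpha$ that you invoke.
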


\paragraph{The analytic Borel transform.}
Similarly, it is more convenient to consider the \textit{exponentiated analytic Borel} transform of the exact WKB solution $\psi_\alpha$ in the direction $\theta$, defined as
\eqntag{\label{210526163114}
	\Borel^{\exp}_\theta \big[ \, \psi_\alpha \, \big] (x, \xi)
		\coleq \exp \left( - \frac{1}{\hbar} \int\nolimits_{x_0}^x \lambda_\alpha (t) \dd{t} \right)
		\exp \left( 
			- \int\nolimits_{x_0}^x \Borel_\theta \big[ \, \SS_\alpha \, \big] (t, \xi) \dd{t} \right)
\fullstop
}
Note again that $\Borel_\theta \big[ \, \psi_\alpha \, \big] \neq \Borel^{\exp}_\theta \big[ \, \psi_\alpha \, \big]$ for the same reason as above.
However, if $\Borel^{\exp}_\theta \big[ \, \psi_\alpha \, \big]$ is holomorphic at some point $(x, \xi)$ then clearly so is $\Borel_\theta \big[ \, \psi_\alpha \, \big]$, and therefore we can deduce a lot of information about the Borel transform from the exponentiated Borel transform.
Denote the analytic Borel transform of $\SS_\alpha$ in the direction $\theta$ by
\eqntag{\label{210603151704}
	\sigma_\alpha (x, \xi) 
		\coleq \Borel_\theta [\, \SS_\alpha \,] (x, \xi)
		= \frac{1}{2\pi i} \oint\nolimits_\theta \SS_\alpha (x, \hbar) e^{\xi / \hbar} \frac{\dd{\hbar}}{\hbar^2}
\fullstop
}

\begin{lem}[\bfseries Convergence of the analytic Borel transform]{210603150108}
For any compactly contained subset $V \Subset U$, there exists an $\epsilon > 0$ such that the analytic Borel transform $\sigma_\alpha$ is uniformly convergent for all $(x, \xi) \in V \times \Xi_\theta$ where $\Xi_\theta \coleq \set{ \xi ~\big|~ \op{dist} \big(\xi, e^{i\theta} \Real_+ \big) < \epsilon}$.
Consequently, the exponentiated analytic Borel transform $\Borel^{\exp}_\theta \big[ \, \psi_\alpha \, \big]$ and hence the analytic Borel transform $\Borel_\theta \big[ \, \psi_\alpha \, \big]$ are uniformly convergent for all $(x, \xi) \in V \times \Xi_\theta$.
In particular, $\Borel^{\exp}_\theta \big[ \, \psi_\alpha \, \big]$ and $\Borel_\theta \big[ \, \psi_\alpha \, \big]$ are convergent for all $\xi \in e^{i\theta} \Real_+$, locally uniformly for all $x \in U$.
\end{lem}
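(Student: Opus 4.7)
The plan is to identify the analytic Borel transform $\sigma_\alpha$, defined by the contour integral \eqref{210603151704}, with the holomorphic function $\tilde\sigma_\alpha$ on the tubular neighbourhood $V \times \Xi_\theta$ whose Laplace transform in the direction $\theta$ reproduces $\SS_\alpha$. The existence of such $\tilde\sigma_\alpha$ is exactly the output of the Borel-Laplace construction for the singularly perturbed Riccati equation proved in \cite{MY2008.06492} and reviewed in \autoref{210525082957}; it is precisely this construction that underpins \autoref{210603110237}. Once $\sigma_\alpha = \tilde\sigma_\alpha$ is established, uniform convergence of $\sigma_\alpha$ on $V \times \Xi_\theta$ is immediate from the holomorphy and local boundedness of $\tilde\sigma_\alpha$.

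First I would extract from \autoref{210525082957} the function $\tilde\sigma_\alpha \in \cal{O}(V \times \Xi_\theta)$ together with the Laplace identity $\SS_\alpha = \cal{L}_\theta[\tilde\sigma_\alpha]$ on $V \times S''_\theta$ for some Borel disc $S''_\theta \subset S'_\theta$, and the fact that the Taylor expansion of $\tilde\sigma_\alpha$ at $\xi = 0$ reproduces the formal Borel transform $\hat\sigma_\alpha$ of $\hat\SS_\alpha$. Next I would apply $\Borel_\theta$ to both sides of $\SS_\alpha = \cal{L}_\theta[\tilde\sigma_\alpha]$: because $\SS_\alpha$ admits locally uniform Gevrey-$1$ asymptotics along $\bar A_\theta$ (cf.\ the asymptotic condition in \autoref{210603110237}), the Nevanlinna-type uniqueness principle recalled in \autoref{210617120300} ensures that $\Borel_\theta$ inverts $\cal{L}_\theta$ on this class, so $\sigma_\alpha = \tilde\sigma_\alpha$ as germs near $\xi = 0$. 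Analytic continuation in $\xi$ inside the connected tubular neighbourhood $\Xi_\theta$ then extends this germ-equality to all of $V \times \Xi_\theta$, yielding the first claim of the lemma.

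The remaining statements follow directly. From the definition \eqref{210526163114}, the integral $\int_{x_0}^x \sigma_\alpha(t,\xi) \dd{t}$ along any fixed path in $V$ is holomorphic in $\xi \in \Xi_\theta$ with locally uniform bounds, and its exponential inherits these properties; hence $\Borel^{\exp}_\theta[\psi_\alpha]$ is uniformly convergent on $V \times \Xi_\theta$. The ordinary analytic Borel transform $\Borel_\theta[\psi_\alpha]$ differs from $\Borel^{\exp}_\theta[\psi_\alpha]$ by operations that preserve holomorphy on this domain, as observed in the paper immediately after \eqref{210526163114}, so it too converges there. Letting $V$ exhaust $U$ by an increasing sequence of compactly contained subdomains finally upgrades uniform convergence on $V \times \Xi_\theta$ to convergence for all $\xi \in e^{i\theta}\Real_+$, locally uniformly for $x \in U$.

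The principal obstacle is extending the germ identity $\sigma_\alpha = \tilde\sigma_\alpha$ at $\xi = 0$ to the full tubular neighbourhood, since the contour integral \eqref{210603151704} \emph{a priori} only defines $\sigma_\alpha$ near the origin in $\xi$. This is circumvented by appealing to the holomorphic extension $\tilde\sigma_\alpha$ on $\Xi_\theta$ supplied by \cite{MY2008.06492} together with the identity principle, which forces $\sigma_\alpha$ to continue analytically and agree with $\tilde\sigma_\alpha$ throughout $\Xi_\theta$.
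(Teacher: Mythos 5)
Your proposal is correct and follows essentially the same route as the paper. The paper's own ``proof'' of this lemma is the remark that it (together with Lemmas \ref*{210603145334}, \ref*{210603150616}, \ref*{210603152031}) follows immediately from the proof of \autoref{210116200501}; concretely, the relevant content is Lemma \ref*{210522171904}(P1), which says that the analytic Borel transform $\phi = \Borel_+[f]$ of the normalised Riccati solution $f$ from the Main Technical Lemma \ref*{210421183535} converges uniformly on $\Omega'_+ \times \Xi''_+$, and then the explicit change of variables \eqref{210526072953} transports this to $\sigma_\alpha$. That assertion (P1) is itself obtained from the successive-approximation series $\phi = \sum\phi_n$ plus Nevanlinna's Theorem, which is exactly the combination you invoke via the Laplace identity and the inversion $\Borel_\theta\circ\Laplace_\theta=\mathrm{id}$ on the Gevrey class. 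The only cosmetic slips: the Laplace identity from the construction carries the constant term $s^{(1)}_\alpha$ (i.e.\ $\SS_\alpha = s^{(1)}_\alpha + \Laplace_\theta[\tilde\sigma_\alpha]$, cf.\ Lemma \ref*{210603152031}), which you omit — harmless since $\Borel_\theta$ annihilates $x$-only terms; and the final ``analytic continuation'' remark implicitly reads $\Borel_\theta$ off the ray as its analytic continuation, which is the same convention the paper uses in the Borel--Laplace identity Lemma \ref*{210617095806}.
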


\begin{lem}[\bfseries Analytic continuation of the formal Borel transform]{210603150616}
\leavevmode\newline
The analytic Borel transform $\sigma_\alpha$ defines the analytic continuation of the formal Borel transform $\hat{\sigma}_\alpha$ along the ray $e^{i\theta} \Real_+ \subset \Complex_\xi$.
Consequently, the exponentiated analytic Borel transform $\Borel^{\exp}_\theta \big[ \, \psi_\alpha \, \big]$ and hence the analytic Borel transform $\Borel_\theta \big[ \, \psi_\alpha \, \big]$ define the analytic continuations along the ray $e^{i\theta} \Real_+$ of $\hat{\Borel}^{\exp} \big[ \, \hat{\psi}_\alpha \, \big]$ and $\hat{\Borel} \big[ \, \hat{\psi}_\alpha \, \big]$, respectively.
In particular, there are no singularities in the Borel plane $\Complex_\xi$ along the ray $e^{i\theta} \Real_+$.
\end{lem}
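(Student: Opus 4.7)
The plan is to invoke Nevanlinna's theorem (\autoref{210617120300}) to upgrade the uniform Gevrey asymptotic relation of \autoref{210603110237} into the desired analytic continuation statement, and then to propagate this through the layered definitions of the various Borel transforms. The heart of the argument concerns $\sigma_\alpha$ versus $\hat{\sigma}_\alpha$; the two statements about exponentiated and ordinary Borel transforms of $\psi_\alpha$ then drop out almost formally.

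First I would reduce to the ``subleading'' pieces of the characteristic roots. Writing $s_\alpha = \lambda_\alpha + \hbar \SS_\alpha$ and $\hat{s}_\alpha = \lambda_\alpha + \hbar \hat{\SS}_\alpha$, subtracting $\lambda_\alpha$ and dividing by $\hbar$ turns the uniform Gevrey asymptotic $s_\alpha \simeq \hat{s}_\alpha$ along $\bar{A}_\theta$ provided by \autoref{210603110237} into a uniform Gevrey asymptotic $\SS_\alpha \simeq \hat{\SS}_\alpha$ along $\bar{A}_\theta$, valid on $V \times S''_\theta$ for any $V \Subset U$.

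Next, since $\bar{A}_\theta$ is the closed half-plane arc of opening $\pi$ bisected by $\theta$ and $\SS_\alpha$ is holomorphic on the Borel disc $S''_\theta$, Nevanlinna's theorem applied to $\SS_\alpha$ (treating $x \in V$ as a holomorphic parameter) yields two outputs: the analytic Borel transform $\sigma_\alpha(x, \xi) = \Borel_\theta[\SS_\alpha](x,\xi)$, already known from \autoref{210603150108} to converge on the tubular neighborhood $V \times \Xi_\theta$, extends holomorphically through $\xi = 0$; and its Taylor expansion at $\xi = 0$ coincides with the formal Borel transform $\hat{\sigma}_\alpha = \hat{\Borel}[\hat{\SS}_\alpha]$. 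Combined with the convergence of $\hat{\sigma}_\alpha$ at the origin from \autoref{210603145334}, uniqueness of analytic continuation forces $\sigma_\alpha$ to be precisely the analytic continuation of $\hat{\sigma}_\alpha$ along the entire ray $e^{i\theta}\Real_+$.

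The two consequences then follow by propagation. Comparing \eqref{210526162424} with \eqref{210526163114}, the exponentiated Borel transforms $\hat{\Borel}^{\exp}[\hat{\psi}_\alpha]$ and $\Borel^{\exp}_\theta[\psi_\alpha]$ are built from $\hat{\sigma}_\alpha$ and $\sigma_\alpha$ respectively by integration in $t$ along a curve in $U$ and exponentiation, multiplied by the common $\xi$-independent prefactor $e^{-\Phi_\alpha/\hbar}$; both operations preserve holomorphy in $\xi$, so the analytic continuation statement transfers directly. For the ordinary Borel transforms $\hat{\Borel}[\hat{\psi}_\alpha]$ and $\Borel_\theta[\psi_\alpha]$, I would apply Nevanlinna one more time, this time to the amplitude $\Psi_\alpha = e^{\Phi_\alpha/\hbar}\psi_\alpha$, which automatically admits $\hat{\Psi}_\alpha$ as its uniform Gevrey asymptotic expansion along $\bar{A}_\theta$; the absence of singularities along the ray is then immediate from holomorphy on a full tubular neighborhood of $e^{i\theta}\Real_+$. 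The main obstacle in all of this is ensuring the parametric uniformity of the Gevrey bounds in $x$ so that Nevanlinna can be applied with a holomorphic parameter, but this uniformity is precisely what the hypotheses of \autoref{210603110237} and \autoref{210603150108} were set up to guarantee, so no new analytic input should be required.
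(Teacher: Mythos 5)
Your argument is correct and follows essentially the same route as the paper: the lemma is an instance of Nevanlinna's Theorem~\ref{210617120300} (together with the Borel--Laplace identity of Lemma~\ref{210617095806}) applied to the uniform Gevrey asymptotic $\SS_\alpha \simeq \hat{\SS}_\alpha$ along $\bar{A}_\theta$ that Lemma~\ref{210603110237} delivers on compact subsets of~$U$. The paper states that this lemma, like its three siblings, ``follows immediately from the proof of Theorem~\ref{210116200501}''; unwinding that remark leads to Lemma~\ref{210522171904}~(P3), whose content is precisely that $f = \cal{S}_+[\hat f]$, i.e.\ Borel resummability---which is exactly what you extract by invoking Nevanlinna directly. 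One small attribution point: the claim that the Taylor coefficients of the analytic Borel transform at $\xi=0$ match the formal Borel transform is, in the paper's organisation, the content of the Borel--Laplace identity (Lemma~\ref{210617095806}, the formula $\phi = \Borel_\theta[f]$), which the paper presents as a consequence of Nevanlinna's proof rather than as part of the theorem's statement---but since you explicitly lean on the parametric uniformity and the convergence facts from Lemmas~\ref{210603145334} and~\ref{210603150108}, the deduction is sound. Your handling of the two ``consequences'' (the exponentiated and ordinary Borel transforms of $\psi_\alpha$) by propagating through the $t$-integration, exponentiation, and the $\xi$-independent prefactor is also exactly in the spirit of the paper's definitions \eqref{210526162424} and \eqref{210526163114}.
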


Let us define the \textit{exponentiated Laplace transform} $\Laplace^{\exp}_\theta$ the same way by applying the ordinary Laplace transform $\Laplace_\theta$ to the exponent.

\begin{lem}[\bfseries Borel-Laplace identity for WKB solutions]{210603152031}
For any compactly contained subset $V \subset U$, there is a Borel disc $S''_\theta \subset S_\theta$ of possibly smaller diameter $\delta'' \in (0, \delta']$ such that the Laplace transform of $\sigma_\alpha$ in the direction $\theta$ is uniformly convergent for all $(x, \hbar) \in V \times S''_\theta$ and satisfies the following identity:
\eqn{
	\SS_\alpha (x, \hbar)
		= s_\alpha^\pto{1} (x) + \Laplace_\theta [ \, \sigma_\alpha  \, ] (x, \hbar)
		= s_\alpha^\pto{1} (x) + \int\nolimits_{e^{i\theta} \Real_+} e^{-\xi/\hbar} \sigma_\alpha (x, \xi) \dd{\xi}
\fullstop
}
Consequently, the exponentiated Laplace transform of $\Borel_\theta^{\exp} \big[ \, \psi_\alpha \, \big]$ is uniformly convergent for all $(x, \hbar) \in V \times S''_\theta$ and satisfies the following identity:
\eqn{
\begin{aligned}
	\psi_\alpha (x, \hbar)
		&= e^{-\Phi_\alpha (x) / \hbar}
			\,
			e^{- s_\alpha^\pto{1} (x)}
			\,
			\Laplace^{\exp}_\theta \Big[ \, \Borel_\theta^{\exp} \big[ \, \psi_\alpha \, \big] \, \Big]
\\		&= \exp \left( - \frac{1}{\hbar} \int\nolimits_{x_0}^x \lambda_\alpha (t) \dd{t} \right)
		\exp \left( - \int\nolimits_{x_0}^x \SS_\alpha (t, \xi) \dd{t} \right)
\fullstop
\end{aligned}
}
In particular, $\Laplace_\theta \big[ \, \Borel_\theta [ \, \psi_\alpha \, ] \, \big]$ is uniformly convergent on $V \times S''_\theta$ and equals $\psi_\alpha$.
\end{lem}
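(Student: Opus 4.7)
The plan is to read off this lemma directly from the Borel-Laplace construction of the exact characteristic root that underlies \autoref{210116200501}, and then to promote it to a statement about $\psi_\alpha$ via the exponential representation \eqref{210218230927} of \autoref{210603110237}. Indeed, the proof of \autoref{210116200501} (sketched in \autoref{210525083210}, following \cite[Theorem 5.17]{MY2008.06492}) builds the subleading Riccati solution $\SS_\alpha$ precisely as $s_\alpha^\pto{1} + \Laplace_\theta[\sigma_\alpha]$, where $\sigma_\alpha$ is first constructed as the unique holomorphic solution on $V \times \Xi_\theta$ of the convolutive Riccati equation obtained by Borel transforming \eqref{210304143646}, and then inverted by the Laplace transform along $e^{i\theta}\Real_+$. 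From this viewpoint the scalar Borel-Laplace identity is essentially built in, and the main work lies in bookkeeping uniform estimates and in repackaging them in exponentiated form.

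The key quantitative input, extracted from the contraction-mapping step of \cite[Theorem 5.17]{MY2008.06492}, is a uniform exponential bound of the form $|\sigma_\alpha(x,\xi)| \leq C e^{|\xi|/\delta''}$ on $V \times \Xi_\theta$, with the fixed-point norm precisely an exponentially weighted sup-norm along the ray. This bound makes the Laplace integral $\Laplace_\theta[\sigma_\alpha](x,\hbar) = \int_{e^{i\theta}\Real_+} e^{-\xi/\hbar}\sigma_\alpha(x,\xi)\,d\xi$ converge uniformly for $(x,\hbar) \in V \times S''_\theta$ whenever the Borel sub-disc $S''_\theta \subset S'_\theta$ is chosen with diameter $\delta'' \in (0,\delta']$ strictly smaller than the exponential rate. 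Combined with Watson's lemma and the Nevanlinna uniqueness theorem invoked in \autoref{210617120300}, which force $\Laplace_\theta[\sigma_\alpha]$ and $\SS_\alpha - s_\alpha^\pto{1}$ to coincide as they share the uniform Gevrey expansion $\sum_{k\geq 1} s_\alpha^\pto{k+1}\hbar^k$ along $\bar A_\theta$, this proves the first identity.

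For the exponentiated identity for $\psi_\alpha$, I would substitute $\SS_\alpha = s_\alpha^\pto{1} + \Laplace_\theta[\sigma_\alpha]$ into the factored form
\[
\psi_\alpha(x,\hbar) = e^{-\Phi_\alpha(x)/\hbar} \exp\left(-\int_{x_0}^x \SS_\alpha(t,\hbar)\,dt\right),
\]
obtained from \eqref{210218230927} by writing $s_\alpha = \lambda_\alpha + \hbar\SS_\alpha$. Interchanging the $x$- and $\xi$-integrations by Fubini (justified by the uniform exponential estimate on $\sigma_\alpha$ and the compactness of $V$) yields
\[
\int_{x_0}^x \SS_\alpha(t,\hbar)\,dt = \int_{x_0}^x s_\alpha^\pto{1}(t)\,dt + \Laplace_\theta\left[\int_{x_0}^x \sigma_\alpha(t,\cdot)\,dt\right](\hbar),
\]
and matching against the definition \eqref{210526163114} of $\Borel^{\exp}_\theta[\psi_\alpha]$ gives the stated factored identity, with uniform convergence on $V \times S''_\theta$ inherited from the scalar statement.

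The main obstacle is entirely quantitative: one must track the uniform exponential bound on $\sigma_\alpha$ carefully enough to (i) fix a single admissible diameter $\delta''$ valid uniformly over the compact set $V$, (ii) justify Fubini over the unbounded ray $e^{i\theta}\Real_+$, and (iii) preserve this uniformity under exponentiation. Each step is routine given the estimates supplied by the Borel-plane construction, but the accumulated constants force the diameter $\delta''$ in the conclusion to be potentially strictly smaller than $\delta'$, which is precisely why the lemma phrases it this way.
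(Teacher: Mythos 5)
Your proposal is correct and follows essentially the same route as the paper: the paper gives no standalone proof of this lemma, merely observing that it is an immediate byproduct of the Borel--Laplace construction in \autoref{210525083210}, and you correctly read the identity off from that construction — $\phi$ (i.e. the pullback of $\tau_\alpha$) is built in the Borel plane via the convergent series of successive approximations with the exponential bound \eqref{200814082657}, then $f = \Laplace_+[\phi]$ defines $\SS_\alpha - s_\alpha^\pto{1}$, and the exponentiated statement follows by pushing the Laplace transform through the $x$-integral. The one cosmetic difference is that you frame the scalar identity as an identification forced by Nevanlinna uniqueness, whereas in the paper's construction $\SS_\alpha - s_\alpha^\pto{1}$ is literally defined as $\Laplace_\theta[\sigma_\alpha]$ (so uniqueness is used to match the asymptotics rather than to equate two independently constructed objects); both viewpoints are valid and lead to the same conclusion.
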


\subsection{Explicit Formula for the Borel Transform}

Thanks to the explicit nature of our construction of the exact WKB solutions, we can write down an explicit recursive formula for the analytic continuation of the exponentiated formal Borel transform \eqref{210526162424}.

\paragraph{}
To this end, it is convenient to introduce the following expressions.
First, we factorise $\SS_\alpha$ as follows:
\eqntag{
	\SS_\alpha = s_\alpha^\pto{1} + \varepsilon_\alpha \sqrt{\DD_0} \TT_\alpha
\qqtext{so that}
	\sigma_\alpha = \varepsilon_\alpha \sqrt{\DD_0} \tau_\alpha
\fullstop{,}
}
where $\varepsilon_\pm \coleq \pm 1$, $\TT_\alpha = \TT_\alpha (x, \hbar)$ is defined by this equality, and $\tau_\alpha \coleq \Borel_\theta \big[ \, \TT_\alpha \, \big]$.
We also define functions $p_\ast, q_\ast$ of $(x,\hbar)$ using the identities $p = p_0 + p_1 \hbar + p_\ast \hbar^2$ and $q = q_0 + q_1 \hbar + q_\ast \hbar^2$.
Next, introduce the following expressions:
\eqntag{\label{210527121916}
\small
\begin{aligned}
	\BB_0 &\coleq \frac{q_\ast - p_\ast (\lambda_\alpha + \hbar s_\alpha^\pto{1}) - q_2 + p_2 \lambda_\alpha}{\DD_0}
\qtext{and}
	\BB_1 \coleq \frac{\hbar p_\ast}{\varepsilon_\alpha \sqrt{\DD_0}},
\\
	b_0 &\coleq \frac{q_2 - p_2 \lambda_\alpha + (s_\alpha^\pto{1})^2 - \del_x s_\alpha^\pto{1} - p_1 s_\alpha^\pto{1}}{\DD_0}
\qtext{and}
	b_1 \coleq \frac{p_1 - 2s_\alpha^\pto{1} - \del_x \log \sqrt{\DD_0}}{\varepsilon_\alpha \sqrt{\DD_0}}.
\end{aligned}
}
Notice that $\BB_0$ and $\BB_1$ are both zero in the limit as $\hbar \to 0$.
An examination of \eqref{210525213123} reveals that $s_\alpha^\pto{2} = - \varepsilon_\alpha \sqrt{\DD_0} b_0$.
Let $\beta_1 \coleq \Borel_\theta \big[ \, \BB_1 \, \big], \beta_0 \coleq \Borel_\theta \big[ \, \BB_0 \, \big]$.

Finally, we introduce two integral operators $\II_\pm$ acting on holomorphic functions $\beta = \beta (x, \xi)$ by the following formula:
\eqntag{\label{210527130326}
	\II_\pm \big[ \, \beta \, \big] (x, \xi)
	\coleq - \int_0^\xi \beta \big( x_t^\pm, \xi - t \big) \dd{t}
\qqtext{where}
	x_t^\pm \coleq \Phi^{-1} \big( \Phi (x) \pm t \big)
\fullstop{,}
}
where the integration path is the straight line segment from $0$ to $\xi$.
Let us fix a compactly contained subset $V \subset U$, and let $V_{\theta, \alpha}$ be the union of all WKB $(\theta, \alpha)$-rays emanating from $V$ (see \autoref{210618114017}).
Then expression \eqref{210527130326} is well-defined for all $x \in V_{\theta, \alpha}$ and all $\xi \in \Complex_\xi$ provided that $x_\xi^\alpha \in V_{\theta, \alpha}$.

Heuristically, this formula should be thought of as integrating along nearby WKB curves.
Indeed, for values of $\xi$ with phase exactly $\theta$, the path $\set{x_t^\alpha ~|~ t \in [0, \xi]}$ is nothing but a segment of the WKB $(\theta, \alpha)$-ray emanating from $x_0$.
Thus, restricting $\xi$ to the ray $e^{i\theta}\Real_+ \subset \Complex_\xi$, expression \eqref{210527130326} is well-defined for all $(x, \xi) \in V_{\theta, \alpha} \times e^{i\theta}\Real_+$.
In fact, since $V$ is compactly contained in $U$, there is some tubular neighbourhood $\Xi_\theta$ of the ray $e^{i\theta}\Real_+ \subset \Complex_\xi$ such that \eqref{210527130326} is well-defined for all $(x, \xi) \in V_{\theta,\alpha} \times \Xi_\theta$.
The main result of this subsection is then the following proposition.

\begin{prop}{210527135451}
The exponentiated analytic Borel transform $\Borel^{\exp}_\theta \big[ \psi_\alpha \big]$, which equals the analytic continuation along the ray $e^{i \theta} \Real_+$ of the exponentiated formal Borel transform $\hat{\Borel}^{\exp} \big[ \hat{\psi}_\alpha \big]$, can be expressed for all $(x, \xi) \in V \times \Xi'_\theta$ as follows:
\eqntag{\label{210618144426}
	\Borel^{\exp}_\theta \big[\, \psi_\alpha \,\big] (x, \hbar)
		= \exp \left( - \frac{1}{\hbar} \int\nolimits_{x_0}^x \lambda_\alpha (t) \dd{t} \right)
		\exp \left( 
			- \varepsilon_\alpha \int\nolimits_{x_0}^x 
				\sqrt{\DD_0 (t)} \, \tau_\alpha (t, \xi) \dd{t} 
			\right)
\fullstop{,}
}
where $\tau_\alpha$ is a holomorphic function on $V \times \Xi'_\theta$ defined as the following uniformly convergent infinite series:
\eqntag{
	\tau_\alpha (x, \xi) = \sum_{n=0}^\infty \tau_{\alpha,n} (x, \xi)
\fullstop
}
The terms $\tau_{\alpha,n}$ are holomorphic functions given by the following recursive formula: $\tau_{\alpha,0} \coleq b_0, \tau_{\alpha,1} \coleq \II_\alpha \big[ \beta_0 + b_1 b_0 \big]$, and, for $n \geq 2$,
\eqntag{\label{210527144120}
	\tau_{\alpha,n}
		\coleq \II_\alpha \Bigg[ 
			b_1 \tau_{\alpha,n-1}
			+ \beta_1 \ast \tau_{\alpha,n-2}
			+ \sum_{\substack{n_1,n_2 \geq 0 \\ n_1 + n_2 = n-2}}
				\tau_{\alpha,n_1} \ast \tau_{\alpha,n_2}
			\Bigg]
\fullstop
}
\end{prop}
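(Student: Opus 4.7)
The plan is to translate the singularly perturbed Riccati equation into an integral equation in the Borel plane for the function $\tau_\alpha = \Borel_\theta \big[\, \TT_\alpha \,\big]$, and then to solve that equation by Picard iteration to produce both the series representation of $\tau_\alpha$ and the recursion \eqref{210527144120}. The starting point is \autoref{210603110237}, which expresses the exact WKB solution as $\psi_\alpha = \exp\big(-\tfrac{1}{\hbar} \int_{x_0}^x s_\alpha \dd{t}\big)$ with $s_\alpha$ the unique exact solution of the Riccati equation \eqref{210304143646} of leading order $\lambda_\alpha$. Writing $s_\alpha = \lambda_\alpha + \hbar s_\alpha^\pto{1} + \hbar \varepsilon_\alpha \sqrt{\DD_0}\, \TT_\alpha$ and $\sigma_\alpha = \varepsilon_\alpha \sqrt{\DD_0}\, \tau_\alpha$, the claimed formula \eqref{210618144426} will follow at the end from the expression \eqref{210526163114} for $\Borel^{\exp}_\theta \big[\, \psi_\alpha \,\big]$ by inspection.

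The first step is to substitute this decomposition into the Riccati equation \eqref{210304143646}. Using the algebraic identities $2\lambda_\alpha - p_0 = \varepsilon_\alpha \sqrt{\DD_0}$ and $q_1 - p_1 \lambda_\alpha - \del_x \lambda_\alpha = -\varepsilon_\alpha \sqrt{\DD_0}\, s_\alpha^\pto{1}$ (which follow respectively from \eqref{210305095811} and from a rearrangement of \eqref{210223183123}), then dividing through by $\varepsilon_\alpha \sqrt{\DD_0}$ to convert $\del_x$ into the Liouville derivative $\del_z = \tfrac{1}{\sqrt{\DD_0}} \del_x$, the Riccati equation reduces to a first-order Riccati-type ODE for $\TT_\alpha$ whose coefficients are precisely the expressions $b_0, b_1, \BB_0, \BB_1$ from \eqref{210527121916}. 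The $\hbar$-independent piece $b_0$ is engineered to capture the inhomogeneity that remains once $\lambda_\alpha$ and $s_\alpha^\pto{1}$ have been extracted, which is reflected in the identity $s_\alpha^\pto{2} = -\varepsilon_\alpha \sqrt{\DD_0}\, b_0$ read off from \eqref{210525213123}; this confirms that after the decomposition the residual leading term of $\TT_\alpha$ is of order $\hbar$. This reduction is a direct but tedious algebraic bookkeeping exercise.

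The second step is to apply the analytic Borel transform $\Borel_\theta$ in $\hbar$ to the ODE for $\TT_\alpha$. A direct calculation shows that the operator $\II_\alpha$ from \eqref{210527130326} satisfies $(\del_z + \alpha \del_\xi)\, \II_\alpha \big[\,\beta\,\big] = -\alpha\, \beta$ and $\II_\alpha \big[\,\beta\,\big] \big|_{\xi = 0} = 0$, so that $\II_\alpha$ is precisely the right inverse, vanishing at $\xi = 0$, of the operator on the $\xi$-side dual to $\alpha \del_z + \hbar^{-1}$. Combining this with the facts that multiplication by $\hbar$-independent coefficients is preserved by $\Borel_\theta$ and that multiplication of two functions vanishing at $\hbar = 0$ becomes convolution on the Borel side, the ODE for $\TT_\alpha$ transforms into the fixed-point integral equation
\begin{equation*}
\tau_\alpha = b_0 + \II_\alpha \Big[\, \beta_0 + b_1 \tau_\alpha + \beta_1 \ast \tau_\alpha + \tau_\alpha \ast \tau_\alpha \,\Big].
\end{equation*}

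The final step is to solve this fixed-point equation by Picard iteration starting from $\tau_{\alpha,0} = b_0$. Substituting the partial sum $\tau_{\alpha,0} + \cdots + \tau_{\alpha,n-1}$ into the right-hand side and extracting the contribution not already accounted for yields $\tau_{\alpha,n}$; grouping terms by total iteration order produces exactly the recursion \eqref{210527144120}. Uniform convergence of the resulting series $\tau_\alpha = \sum_{n \geq 0} \tau_{\alpha,n}$ on $V \times \Xi'_\theta$ follows from Gronwall-type estimates along WKB rays, the $\xi$-integrations in $\II_\alpha$ producing factorial dampening and the uniform bounds on $b_0, b_1, \beta_0, \beta_1$ providing the geometric envelope; these estimates are already developed in \cite[\S5]{MY2008.06492} and underpin both \autoref{210603150108} and \autoref{210603152031}. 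The principal obstacle is the algebraic bookkeeping in the first step, particularly in tracking signs to identify each of $b_0$, $b_1$, $\BB_0$, $\BB_1$ in their canonical forms; after that, the Borel-plane manipulations and the convergence analysis are essentially mechanical given the prior results in the paper.
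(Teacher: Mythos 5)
Your approach coincides with the paper's: the published proof of this proposition is simply a pointer to the Picard recursion \eqref{210522160020} established in the proof of \autoref{210421183535}, which is obtained exactly as you describe (reduce the Riccati equation for $\TT_\alpha$ to standard form, apply the Borel transform, pass to the fixed-point integral equation, iterate), and then translated back to the $x$-coordinate; your fixed-point equation matches \eqref{210519162121} under the dictionary $(a_0,a_1,\alpha_0,\alpha_1,\phi) \leftrightarrow (b_0,b_1,\beta_0,\beta_1,\tau_\alpha)$, and the convergence estimates are those of steps 3--4 of that proof.

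One caution on the intermediate step: the operator identity you quote for $\II_\alpha$ has a sign slip. Differentiating under the integral in \eqref{210527130326} and using the fundamental theorem of calculus along the characteristic $t \mapsto (\Phi(x) + \varepsilon_\alpha t, \xi - t)$ gives
\[
\Big(\varepsilon_\alpha \tfrac{1}{\sqrt{\DD_0}}\del_x - \del_\xi\Big)\II_\alpha\big[\,\beta\,\big] = \beta, \qquad \II_\alpha\big[\,\beta\,\big]\big|_{\xi=0} = 0,
\]
so $\II_\alpha$ right-inverts $\varepsilon_\alpha \tfrac{1}{\sqrt{\DD_0}}\del_x - \del_\xi$, which is the Borel image of $\varepsilon_\alpha \tfrac{1}{\sqrt{\DD_0}}\del_x - \hbar^{-1}$; the minus sign before $\hbar^{-1}$ comes from the fact that the equation for $\TT_\alpha$ derived in the proof of \autoref{210116200501} reads $\varepsilon_\alpha \tfrac{\hbar}{\sqrt{\DD_0}}\del_x \TT_\alpha - \TT_\alpha = \hbar(\cdots)$. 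Your stated identity $(\del_z + \alpha\del_\xi)\II_\alpha[\beta] = -\alpha\beta$ and the accompanying description of the Laplace-side operator as ``$\alpha\del_z + \hbar^{-1}$'' each flip this sign. Since the fixed-point equation you actually write down is nonetheless the correct one, this is a misstatement in the commentary rather than a gap in the argument, but it would trip up a reader who tried to reproduce the reduction from the Riccati equation to the integral equation.
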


\begin{proof}
This is a consequence of the proof of \autoref{210421183535}.
Namely, the recursive formula \eqref{210527144120} is the formula \eqref{210522160020} but written in the coordinate $x$ instead of the coordinate $z$.
\end{proof}

\begin{example}[undeformed coefficients]{210527143514}
As ever, the above expressions are simplest when the coefficients $p,q$ of our differential equation are independent of $\hbar$.
In this case, formulas \eqref{210527121916} are considerably simplified:
\eqntag{
	\BB_0 = \BB_1 = 0,
\quad
	b_0 = \frac{(s_\alpha^\pto{1})^2 - \del_x s_\alpha^\pto{1}}{\DD_0},
\quad
	b_1 = - \frac{2s_\alpha^\pto{1} + \del_x \log \sqrt{\DD_0}}{\varepsilon_\alpha \sqrt{\DD_0}}
\fullstop
}
In particular, this means $\beta_0 = \beta_1 = 0$, and so the recursion \eqref{210527144120} reduces to
\eqntag{\label{210527145127}
	\tau_{\alpha,0} = b_0,
\quad
	\tau_{\alpha,1} = \II_\alpha \big[ b_1 b_0 \big],
\quad
	\tau_{\alpha,n}
		\coleq \II_\alpha \Bigg[ 
			b_1 \tau_{\alpha,n-1}
			+ \sum_{\substack{n_1,n_2 \geq 0 \\ n_1 + n_2 = n-2}}
				\tau_{\alpha,n_1} \ast \tau_{\alpha,n_2}
			\Bigg]
\fullstop
\qedhere
}
\end{example}

\begin{example}[Schrödinger equation]{210527144424}
Formulas \eqref{210527121916} are also considerably simplified for the Schrödinger equation:
\eqn{
\begin{gathered}
	\BB_0 = \frac{\QQ_\ast - \QQ_2}{4\QQ_0},
\quad
	\BB_1 = 0,
\quad
	b_0 = \frac{\QQ_2 + (s_\alpha^\pto{1})^2 - \del_x s_\alpha^\pto{1}}{4 \QQ_0},
\quad
	b_1 = - \frac{s_\alpha^\pto{1} + \tfrac{1}{2} \del_x \log \sqrt{\QQ_0}}{\varepsilon_\alpha \sqrt{\QQ_0}}
\fullstop
\end{gathered}
}
Thus, $\beta_1 = 0$ but $\beta_0$ is not necessarily $0$.
Recursion \eqref{210527144120} in this case has exactly the same form as in \eqref{210527145127} but with $\tau_{\alpha,1} = \II_\alpha [\beta_0 + b_1 b_0]$.

Combining this with the previous example, the formulas in the case of a Schrödinger equation with $\hbar$-independent potential (i.e., $\QQ = \QQ_0$) have the simplest possible form:
\eqn{
	\BB_0 = \BB_1 = 0,
\qquad
	b_0
		= \frac{5}{64} \frac{(\QQ')^2}{\QQ^3} - \frac{1}{16} \frac{\QQ''}{\QQ^2},
\qquad
	b_1 
		= - \varepsilon_\alpha \frac{1}{2} \frac{\QQ'}{\QQ^{3/2}}
\fullstop
}
In this case, $\beta_0 = \beta_1 = 0$ so the recursion \eqref{210527144120} is again given by \eqref{210527145127}.
\end{example}

\begin{rem}[\textbf{Resurgent nature of the exact WKB analysis}]{210622072950}
The resurgent property of WKB solutions for Schrödinger equations with polynomial potential was conjectured by Voros in \cite{MR728983,MR729194} and partially argued by Écalle in the preprint \cite[p.40]{EcalleCinqApplications} (see \cite[Comment on p.32]{MR1704654}).
We do not address this point directly in our paper.
However, we believe that formula \eqref{210618144426} for the Borel transform is sufficiently simple and explicit to keep track of the singularities in the Borel plane.
We therefore hope it can yield a full proof of the conjectured resurgence property of WKB solutions not only for Schrödinger equations with polynomial potential (as conjectured by Voros), but more generally for all second-order ODEs \eqref{210115121038} with rational dependence on $x$.
\end{rem}

\subsection{Notable Special Cases and Examples}
\label{210603163500}

In this subsection, we restate the existence and uniqueness results proved in this paper for two important classes of WKB geometry: closed and generic WKB trajectories.
In both of these cases, the technical assumptions in \autoref{210116200501} simplify considerably.
Throughout this subsection, we maintain our background assumptions of \autoref{210612115147}.

\subsubsection*{Closed WKB Trajectories}
\addcontentsline{toc}{subsubsection}{Closed WKB Trajectories}

The statement of \autoref{210116200501} is simplest for closed trajectories.
In this case, assumptions (1) and (2) are automatic because a closed trajectory can always be embedded in a WKB ring domain whose closure is a compact subset of $X$.

\begin{cor}[\bfseries Existence and uniqueness for closed WKB trajectories]{210519102448}
\leavevmode\newline
Let $\theta \in \Theta$ be fixed.
Suppose that the WKB $\theta$-trajectory passing through $x_0$ is closed.
Let $U \subset X$ be any simply connected neighbourhood of $x_0$ contained in a WKB $\theta$-ring domain $R \subset X$.
Then all the conclusions of \autoref{210116200501}, \autoref{210603110237}, \autoref{210524175026}, and \autoref{210221112347} hold verbatim simultaneously for both $\alpha = \pm$.
\end{cor}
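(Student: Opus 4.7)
The plan is to verify that each of the hypotheses of \autoref{210116200501} is automatically satisfied for both signs $\alpha \in \set{+,-}$ whenever $U$ is contained in a WKB $\theta$-ring domain $R$. Once this is done, the conclusions of \autoref{210603110237}, \autoref{210524175026}, and \autoref{210221112347} follow immediately, as they were formulated in each case either as a consequence of, or as a refinement attached to, the main theorem.

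First I would check the three structural conditions. The domain $U$ is simply connected and contains $x_0$ by assumption; $U$ is free of turning points because the ring domain $R$ is free of turning points by definition (it is swept out by closed WKB trajectories, none of which can pass through a finite critical point); and every WKB $(\theta,\alpha)$-ray emanating from $U$ is complete for either choice of $\alpha$, because every WKB $\theta$-trajectory through any point of $R$ is closed and hence, by \autoref{210603173853}, complete in both directions.

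Next I would verify the two analytic conditions (1) and (2) on a domain of the form $V_{\theta,\alpha,\RR}$. The crucial observation is that if $V \subset U$ is any sufficiently small open neighbourhood of a point $x \in U$, then the full flow $V_{\theta,\alpha}$ lies entirely inside $R$ (by periodicity, it is a subset of the ring swept out by the rays through $V$), and after shrinking $R$ slightly we may assume that $\bar{R}$ is a compact subset of $X$ containing no turning points. On such a compact set, $\DD_0$ is bounded and bounded away from zero, so $\del_x \log \sqrt{\DD_0}$ is bounded and condition (1) holds trivially on $V_{\theta,\alpha}$, hence on the smaller $V_{\theta,\alpha,\RR}$ for any $\RR > 0$. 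For condition (2), the background assumption \eqref{210115161312} provides locally uniform Gevrey asymptotic expansions of $p,q$ along $\bar{A}$; by a compactness argument applied to $\bar{R}$, these upgrade to uniform expansions on $\bar{R}$, and combined with the boundedness of $1/\sqrt{\DD_0}$ and $1/\DD_0$ on $\bar{R}$, condition (2) holds uniformly on $V_{\theta,\alpha,\RR}$ along $\bar{A}_\theta \subset \bar{A}$.

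Because this argument is symmetric under $\alpha \mapsto -\alpha$ (a closed trajectory being equally complete in both directions), \autoref{210116200501} applies simultaneously for $\alpha = +$ and $\alpha = -$, producing the exact WKB basis $\set{\psi_+,\psi_-}$ on $U$; the remaining assertions are then direct consequences. I do not expect a genuine obstacle here, only a bookkeeping subtlety: the multivaluedness of the Liouville transformation $\Phi$ on $R$ must not be confused with a failure of the local conditions in \autoref{210116200501}, which are stated intrinsically in $x$ rather than via $\Phi$ and so are insensitive to the monodromy around the ring.
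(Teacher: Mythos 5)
Your proof is correct and takes essentially the same approach as the paper, which in fact gives only a one-sentence justification for this corollary (that a closed trajectory can be embedded in a ring domain with compact closure in $X$, making conditions (1) and (2) automatic); you fill in exactly the details the paper leaves implicit. One small imprecision worth noting: since the hypotheses of \autoref{210116200501} are local (a condition at each $x \in U$), it is cleaner to take, for each $x$, a sub-ring around the closed trajectory $\gamma_x$ with compact closure in $X$ and choose $V$ small enough that $V_{\theta,\alpha}$ lies in that sub-ring, rather than speaking of "shrinking $R$" globally (which could fail to contain all of $U$); but this is bookkeeping and the argument is unaffected.
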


\paragraph{Monodromy of exact WKB solutions on WKB ring domains.}
The exact WKB solutions $\psi_+, \psi_-$ from \autoref{210519102448} extend to the entire WKB ring domain $R$ but only as multivalued functions.
Thanks to the explicit formula in \autoref{210603110237}, their monodromy is easy to calculate.

\begin{prop}{210603165101}
The exact WKB solutions $\psi_+, \psi_-$ from \autoref{210519102448} extend via the formula \eqref{210218230927} to multivalued holomorphic solutions on $R \times S'_\theta$ with monodromy
\begin{equation}\label{210525155939}
	a_\pm (\hbar) 
		\coleq \exp \left( - \frac{1}{\hbar} \oint\nolimits_{\gamma_{\pm}} s_\pm (x, \hbar) \dd{x} \right)
\fullstop{,}
\end{equation}
where the integration contour $\gamma_{\pm}$ is any path contained in $R$ and homotopic to the closed WKB $\theta$-trajectory passing through $x_0$ and with orientation matching the orientation of the WKB $(\theta, \pm)$-ray.
The monodromy $a_\pm$ is a holomorphic function of $\hbar \in S'_\theta$ which admits exponential Gevrey asymptotics in a halfplane:
\eqntag{
	a_\pm (\hbar) \simeq \hat{a}_\pm (\hbar) \coleq \exp \left( - \frac{1}{\hbar} \oint\nolimits_{\gamma_{\pm}} \hat{s}_\pm (x, \hbar) \dd{x} \right)
\qquad
\text{as $\hbar \to 0$ along $\bar{A}_\theta$}
\fullstop
}
\end{prop}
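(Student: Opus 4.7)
The plan is to combine the explicit representation \eqref{210218230927} for $\psi_\pm$ with the local uniqueness of the exact characteristic root $s_\pm$ established in \autoref{210603110237}. On $U \times S'_\theta$, the function $s_\pm$ is a meromorphic solution of the Riccati equation \eqref{210304143646} uniquely characterised by admitting $\hat{s}_\pm$ as its uniform Gevrey asymptotic expansion along $\bar{A}_\theta$.

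The first step is to extend $s_\pm$ meromorphically from $U$ to all of $R$ in the $x$-variable (for each fixed $\hbar \in S'_\theta$) by parametric existence-and-uniqueness \cite[Theorem 24.1]{MR0460820} applied to \eqref{210304143646} along paths in $R$. To see that this extension is single-valued on $R$, observe that since $R$ is a WKB ring domain whose closed trajectories have a scalar period $\omega \in \Real_+$, the branch $\sqrt{\DD_0}$ (hence $\lambda_\pm$) is single-valued on $R$. Running $s_\pm$ around any closed loop $\gamma \subset R$ yields a Riccati solution which, by the local uniqueness of \autoref{210603110237}, must agree with $s_\pm$ because it has the same leading order $\lambda_\pm$ and the same Gevrey asymptotic $\hat{s}_\pm$.

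Given single-valuedness of $s_\pm$ on $R \times S'_\theta$, formula \eqref{210218230927} defines $\psi_\pm$ on the universal cover of $R$, and transporting along $\gamma_\pm$ picks up the additive period $\oint_{\gamma_\pm} s_\pm(x,\hbar)\,\dd{x}$ in the integral, producing the multiplicative monodromy $a_\pm(\hbar)$ displayed in \eqref{210525155939}. The orientation convention simply matches the leading-order action $-\frac{1}{\hbar}\oint_{\gamma_\pm}\lambda_\pm\,\dd{x}$ to the $\pm$-ray direction. Independence from the homotopy representative is automatic, since poles of $s_\pm$ occur only at zeros of $\psi_\pm$ and are simple with residue in $-\hbar\Integer$, contributing $2\pi i\Integer$ in the exponent.

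For the exponential Gevrey asymptotic of $a_\pm$, the asymptotic $s_\pm \simeq \hat{s}_\pm$ from \eqref{210603112057} holds uniformly on a neighbourhood of $x_0$. Covering the compact loop $\gamma_\pm$ by finitely many such neighbourhoods (all points of $R$ being regular, \autoref{210603110237} applies at each of them), and invoking local uniqueness to identify these local exact solutions with the continuation of $s_\pm$, one obtains the uniform Gevrey property of $s_\pm$ on a tubular neighbourhood of $\gamma_\pm$, over a common Borel disc $S''_\theta \subset S'_\theta$. Integration over the compact contour preserves uniform Gevrey asymptotics, so $\oint_{\gamma_\pm} s_\pm\,\dd{x} \simeq \oint_{\gamma_\pm} \hat{s}_\pm\,\dd{x}$ along $\bar{A}_\theta$, and exponentiating $-1/\hbar$ times this expansion yields $a_\pm \simeq \hat{a}_\pm$. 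The main obstacle is precisely this propagation of the uniform Gevrey asymptotic from a local neighbourhood of $x_0$ along the full loop, which ultimately amounts to finding a common Borel disc on which every point of $\gamma_\pm$ admits the same local exact characteristic root; this is handled by compactness of $\gamma_\pm$ together with the flexibility of \autoref{210603110237} at every regular point of $R$.
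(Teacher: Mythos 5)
Your proof is correct, but it takes a genuinely different route from the paper's. The paper establishes single-valuedness of $s_\pm$ on the ring domain $R$ through the \emph{periodicity} of the Borel--Laplace construction itself: in Step 4 of the proof of \autoref{210116200501}, if the Liouville transformation $\Phi$ is $\omega$-periodic (as it is on a ring domain), then the coefficients $\AA_0, \AA_1$ of the standard-form Riccati equation are $\omega$-periodic in $z$, and property (P4) of \autoref{210522171904} shows the constructed solution $f$ is $\omega$-periodic too --- this hinges on the observation that the integral operator $\II_+$ in \eqref{210611155612} visibly preserves periodicity. Single-valuedness of $s_\pm$ on $R$, hence the monodromy formula, then falls out of the construction directly. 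Your argument instead covers the compact closed trajectory by finitely many WKB discs, applies the local existence-and-uniqueness of \autoref{210603110237} at each centre (which is legitimate, since every point of $R$ is a regular point on a closed trajectory and \autoref{210519102448} therefore applies there), and glues via uniqueness. Both approaches are valid; the paper's is baked into the analytic machinery, while yours is more topological and emphasises the role of the uniqueness clause.

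Two small points are worth flagging. First, the opening of your second paragraph reads slightly circularly: you assert that the continuation of $s_\pm$ around a loop ``has the same Gevrey asymptotic $\hat{s}_\pm$'' before supplying the reason, which only appears in your final paragraph (the finite cover by WKB discs). Analytic continuation does not preserve asymptotic expansions in general, so the covering-and-gluing step is not a refinement but the actual content of the claim; it would be cleaner to lead with it. Second, you invoke \cite[Theorem 24.1]{MR0460820} to continue $s_\pm$, but that theorem is for linear ODEs and the Riccati equation \eqref{210304143646} is nonlinear. The paper uses that citation to continue the linear solution $\psi_\alpha$ and then obtains $s_\alpha = -\hbar\, \del_x \log \psi_\alpha$ as a meromorphic function; you should do the same, which in fact immediately explains why the only poles of $s_\pm$ are at zeros of $\psi_\pm$ with residue in $-\hbar\,\Integer_{>0}$, as you use in your homotopy-independence remark.
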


\begin{cor}[\bfseries Existence and uniqueness in wider sectors]{210603165152}
More generally, suppose that for every $\theta \in \Theta$, the WKB $\theta$-trajectory passing through $x_0$ is closed.
Then $U$ can be chosen sufficiently small such that all the conclusions of \autoref{210527195440} hold verbatim, and the monodromy $a_\pm$ extends to a holomorphic function on $S'$ with exponential Gevrey asymptotics: $a_\pm (\hbar) \simeq \hat{a}_\pm (\hbar)$ as $\hbar \to 0$ along $\bar{A}$.
\end{cor}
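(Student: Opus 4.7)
The plan is to combine \autoref{210519102448} (valid for a single direction) with the gluing argument used in the proof of \autoref{210527195440}, and then propagate \autoref{210603165101} across the full arc. The first step is to produce a simply connected neighbourhood $U \ni x_0$ that is contained in a WKB $\theta$-ring domain $R_\theta \subset X$ for \emph{every} $\theta \in \Theta$ simultaneously. For each individual $\theta$ such an $R_\theta$ exists by hypothesis. Since the real foliation of $\DD_0 \dd{x}^2$ depends continuously on $\theta$ and the closedness assumption persists throughout the compact arc $\Theta = [\theta_-, \theta_+]$, one can select for each $\theta$ an open disc of radius $\rho(\theta) > 0$ centred at $x_0$ contained in $R_\theta$, and extract a uniform lower bound $\rho > 0$ by lower semicontinuity of $\rho(\cdot)$ on the compact set $\Theta$. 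Take $U$ to be the open disc of radius $\rho$.

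Next I would verify the hypotheses of \autoref{210527195440} on this $U$. Fix any $V \Subset U$. By construction, for every $\theta \in \Theta$ the set $V_{\theta,\alpha}$ of WKB $(\theta,\alpha)$-rays emanating from $V$ lies in the closure $\bar{R}_\theta$, a compact subset of $X$ free of turning points. Taking the union over the compact $\Theta$ yields a compact set $K \subset X$ containing $V_{\Theta,\alpha}$ on which $\sqrt{\DD_0}$ is bounded away from zero. Consequently $\tfrac{1}{\sqrt{\DD_0}} \del_x \log \sqrt{\DD_0}$ is bounded on $K$, giving hypothesis (1); hypothesis (2) follows from the locally uniform Gevrey asymptotics of $p, q$ restricted to the compact set $K$ (so that the prefactors $\tfrac{1}{\sqrt{\DD_0}}$ and $\tfrac{1}{\DD_0}$ can be absorbed into the constants), uniformly as $\hbar \to 0$ along $\bar{A}$. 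Applying \autoref{210527195440} then produces the exact WKB basis $\psi_\pm$ on $U \times S'$ together with exact characteristic roots $s_\pm$ on $U \times S'$ admitting $\hat{s}_\pm$ as uniform Gevrey asymptotic expansions along $\bar{A}$.

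Finally, for the monodromy, I would argue as follows. For each $\theta \in \Theta$, \autoref{210603165101} extends $\psi_\pm$ to a multivalued solution on $R_\theta \times S'_\theta$ with monodromy $a_\pm^{(\theta)}(\hbar)$ given by \eqref{210525155939} using a loop $\gamma_\pm \subset R_\theta$ homotopic to the closed trajectory through $x_0$. For $\theta_1, \theta_2 \in \Theta$ with $S'_{\theta_1} \cap S'_{\theta_2} \neq \emptyset$, the uniqueness from the previous step forces the exact characteristic roots $s_\pm$ to coincide on $U \times (S'_{\theta_1} \cap S'_{\theta_2})$; since the two loops $\gamma_\pm \subset R_{\theta_i}$ are homotopic in $R_{\theta_1} \cup R_{\theta_2}$ (where $s_\pm$ is meromorphic for $\hbar$ in the overlap of Borel discs), the loop integrals agree and $a_\pm^{(\theta_1)} = a_\pm^{(\theta_2)}$ on the overlap. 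The $a_\pm^{(\theta)}$ therefore glue to a single holomorphic function $a_\pm$ on $S'$, and the uniform Gevrey asymptotics $s_\pm \simeq \hat{s}_\pm$ along $\bar{A}$ pass through the bounded loop integral and the exponential to yield $a_\pm \simeq \hat{a}_\pm$ along $\bar{A}$. The hardest step is the first: the uniform choice of $U$ depends on the closedness hypothesis holding throughout the \emph{closed} arc $\Theta$, which is precisely what prevents degeneration of the ring-domain geometry as $\theta$ varies and makes the compactness argument viable.
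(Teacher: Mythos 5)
Your proposal is correct and reconstructs what the paper leaves implicit: combine the single-direction result (\autoref{210519102448} and \autoref{210603165101}) with the Borel-disc gluing argument from the proof of \autoref{210527195440}, using compactness of $\Theta$ to extract a $\theta$-uniform neighbourhood $U$ and to uniformize the Borel-disc diameters. Two small points worth flagging, though neither changes the verdict: (i) the lower semicontinuity of $\rho(\cdot)$ is asserted rather than derived — the clean justification is that the set $\{(\theta,x) : x \in R_\theta\}$ is open in $\Theta \times X$ because closed-trajectory structure of the quadratic differential $e^{-2i\theta}\DD_0\,dx^2$ is stable under perturbation of $\theta$, so a tube-lemma argument over the compact $\Theta$ gives the uniform disc; (ii) for the monodromy gluing, what one really needs is that $s_\pm(\cdot,\hbar)$ is single-valued and meromorphic on a region containing both core loops $\gamma_\pm^{(\theta_1)}$, $\gamma_\pm^{(\theta_2)}$ and that these loops are homologous there, which holds because the $\theta$-trajectory through $x_0$ varies continuously and stays closed throughout $\Theta$, so the loops are connected by a homotopy lying in $\bigcup_\theta R_\theta$; the period integrals then coincide. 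Your argument for the exponential Gevrey asymptotics of $a_\pm$ — splitting off the constant $\oint\lambda_\pm\,dx$ and noting that the compact loop integral of the uniformly Gevrey $\SS_\pm$ is Gevrey, then exponentiating — is exactly right.
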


\begin{rem}{210525160309}
We note that upon writing $s_\pm (x, \hbar) = \lambda_\pm (x) + \hbar \SS_\pm (x, \hbar)$, the monodromy \eqref{210525155939} is expressed as
\eqntag{\label{210525160002}
	a_\pm (\hbar)
		= \exp \left( - \frac{1}{\hbar} \oint\nolimits_{\gamma_\pm} \lambda_\pm (x) \dd{x} \right)
		  \exp \left( - \oint\nolimits_{\gamma_\pm} \SS_\pm (x,\hbar) \dd{x} \right)
\fullstop
}
This expression is notable because the complex number $\oint\nolimits_{\gamma_\pm} \lambda_\pm (x) \dd{x}$ is a \textit{period} of a certain covering Riemann surface (called \textit{spectral curve}) naturally associated with our differential equation (namely, the one given by the leading-order characteristic equation \eqref{210415145506}).
These numbers, and therefore functions \eqref{210525160002}, play pivotal role in the global analysis of such differential equations and more general meromorphic connections on Riemann surfaces.
These topics are beyond the scope of this paper, but see for example \cite{MR3003931}.
More comments will appear in \cite{MY210517181728}.
\end{rem}

\subsubsection*{Generic WKB Trajectories}
\addcontentsline{toc}{subsubsection}{Generic WKB Trajectories}

For generic WKB rays, condition (1) in \autoref{210116200501} is automatically taken care of by insisting (in the definition of generic rays) that the limiting point $x_\infty$ is an infinite critical point; i.e., a pole of $\DD_0$ of order $m \geq 2$.
At the same time, condition (2) must still be imposed but it is somewhat simplified by the fact that $\DD_0$ behaves like $(x-x_\infty)^{-m}$ near $x_\infty$.
Altogether, we have the following statement.

\begin{cor}[\bfseries Existence and uniqueness for generic WKB rays]{210519103256}
\leavevmode\newline
Fix a sign $\alpha \in \set{+, -}$ and a phase $\theta \in \Theta$.
Suppose that the WKB $(\theta,\alpha)$-ray emanating from $x_0$ is generic.
Let $x_\infty \in \Complex_x$ be the limiting infinite critical point of order $m \geq 2$.
In addition, we make the following assumption on the coefficients $p,q$:
\eqntag{
\label{210514194406}
\begin{gathered}
	(x-x_\infty)^{m/2} p (x,\hbar) \simeq (x-x_\infty)^{m/2} \hat{p} (x,\hbar)
\fullstop{,}
\\
	(x-x_\infty)^m q (x,\hbar) \simeq (x-x_\infty)^m \hat{q} (x,\hbar)
\end{gathered}
}
as $\hbar \to 0$ along $\bar{A}$, uniformly for all $x \in X$ sufficiently close to $x_\infty$.
Then $x_0$ has a neighbourhood $U \subset X$ such that all the conclusions of \autoref{210116200501}, \autoref{210603110237}, \autoref{210524175026}, and \autoref{210221112347} hold verbatim.
\end{cor}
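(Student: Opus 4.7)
The plan is to reduce everything to the general existence theorem \autoref{210116200501} by verifying that both of its technical hypotheses (1) and (2) are automatic consequences of the generic-ray geometry combined with the weighted asymptotic assumption \eqref{210514194406}. Once that reduction is established, the statements of \autoref{210603110237}, \autoref{210524175026}, and \autoref{210221112347} follow verbatim.

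First I would construct the neighbourhood $U$ and set up the geometry. Using the fact recalled in \autoref{210524105411} that every generic WKB $(\theta,\alpha)$-ray can be embedded in a WKB $(\theta,\alpha)$-halfstrip domain, I take $U$ to be a small simply connected WKB disc about $x_0$ sitting inside such a halfstrip. Every WKB $(\theta,\alpha)$-ray emanating from $U$ is then complete and terminates at $x_\infty$. For any compactly contained $V \subset U$, the sub-halfstrip $V_{\theta,\alpha}$ is the preimage under the Liouville transformation $\Phi$ of a tubular neighbourhood of a ray in $\Complex_z$. The key geometric observation is that the truncated region $V_{\theta,\alpha,\RR}$ collapses onto $x_\infty$ as $\RR \to \infty$; in particular, for $\RR$ large enough, $V_{\theta,\alpha,\RR}$ sits inside an arbitrarily small punctured neighbourhood of $x_\infty$ on which $\DD_0(x) = (x-x_\infty)^{-m} f(x)$ with $f$ holomorphic and non-vanishing.

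With this setup, both hypotheses of \autoref{210116200501} reduce to local computations near $x_\infty$. Since $\sqrt{\DD_0} \sim \sqrt{f}\,(x-x_\infty)^{-m/2}$, a short calculation gives
\[
\frac{1}{\sqrt{\DD_0}} \del_x \log \sqrt{\DD_0}
= -\frac{m \, (x-x_\infty)^{m/2 - 1}}{2\sqrt{f(x)}}
 + \frac{(x-x_\infty)^{m/2}}{2 f(x)^{3/2}} f'(x),
\]
which is bounded on $V_{\theta,\alpha,\RR}$ precisely because $m \geq 2$; this is hypothesis (1). For hypothesis (2) I observe that $1/\sqrt{\DD_0} = (x-x_\infty)^{m/2}/\sqrt{f}$ and $1/\DD_0 = (x-x_\infty)^{m}/f$, with $\sqrt{f}$ and $f$ holomorphic, bounded, and bounded away from zero in a neighbourhood of $x_\infty$. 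Multiplying by such factors preserves locally uniform Gevrey asymptotics, so the assumed asymptotics \eqref{210514194406} of $(x-x_\infty)^{m/2} p$ and $(x-x_\infty)^m q$ transfer directly to the required uniform asymptotics of $p/\sqrt{\DD_0}$ and $q/\DD_0$ on $V_{\theta,\alpha,\RR}$.

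The step I expect to demand the most care is the first one: confirming that $V_{\theta,\alpha,\RR}$ really contracts into any prescribed neighbourhood of $x_\infty$ as $\RR$ grows. This is precisely where the \emph{generic} property of the ray is used—namely, that the limit set is a single infinite critical point, rather than merely that the ray is complete—since it is what ensures that flowing far along the ray in $\Phi$-coordinates corresponds to approaching $x_\infty$ in $\Complex_x$. Once this geometric point is pinned down, everything else is a direct application of the main theorem and its corollaries.
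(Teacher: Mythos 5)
Your proposal is correct and follows essentially the same route the paper takes (though the paper only sketches it in the prose surrounding the corollary). You correctly identify that the job is to verify hypotheses (1) and (2) of \autoref{210116200501} by localising near $x_\infty$: the factorisation $\DD_0(x) = (x-x_\infty)^{-m}f(x)$ with $f$ holomorphic and non-vanishing makes $\frac{1}{\sqrt{\DD_0}}\del_x\log\sqrt{\DD_0}$ involve only non-negative powers of $(x-x_\infty)$ precisely because $m \geq 2$, and the weighted asymptotic assumption \eqref{210514194406} transfers directly to the required uniform Gevrey asymptotics of $p/\sqrt{\DD_0}$ and $q/\DD_0$ after multiplying by the bounded, bounded-away-from-zero factors $1/\sqrt{f}$ and $1/f$. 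Your explicit computation for hypothesis (1) checks out, and you correctly flag the one genuinely geometric ingredient — that the truncated halfstrip $V_{\theta,\alpha,\RR}$ contracts into an arbitrarily small neighbourhood of $x_\infty$ as $\RR \to \infty$, which is exactly what the generic (as opposed to merely complete) property of the ray buys you.
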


Specifically, $U \subset X$ can be chosen to be any simply connected domain containing $x_0$ which is free of turning points and such that every WKB $(\theta,\alpha)$-ray emanating from $U$ is generic and tends to $x_\infty$.
Note that the assumption in \autoref{210519103256} that the WKB $(\theta,\alpha)$-ray emanating from $x_0$ is generic guarantees that such a neighbourhood $U$ always exists, see \autoref{210524105411}.

\begin{example}[Mildly perturbed coefficients]{210612114025}
If the coefficients $p,q$ of the differential equation \eqref{210115121038} are at most polynomial in $\hbar$ (i.e., if $p,q \in \cal{O} (X) [\hbar]$) then condition \eqref{210514194406} simplifies even further as follows: for every $k \geq 0$,
\eqntag{\label{210612114403}
	\textup{(pole order of $p_k$ at $x_\infty$)} \leq \tfrac{1}{2} m
\qtext{and}
	\textup{(pole order of $q_k$ at $x_\infty$)} \leq m
\fullstop
}
\end{example}

\enlargethispage{10pt}
\begin{cor}[\bfseries Existence and uniqueness in wider sectors]{210603165446}
More generally, suppose that for every $\theta \in \Theta$, the WKB $\theta$-trajectory passing through $x_0$ is generic, and that condition \eqref{210514194406} is satisfied at the (necessarily $\theta$-independent) limiting infinite critical point $x_\infty$.
Then $U$ can be chosen sufficiently small such that all the conclusions of \autoref{210527195440} hold verbatim.
\end{cor}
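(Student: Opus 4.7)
The plan is to reduce the statement to Theorem~\ref{210527195440} by verifying that its hypotheses hold, using essentially the same reasoning as in Corollary~\ref{210519103256} but performed uniformly in the phase parameter. The two main ingredients will be (i) the compactness of the closed arc $\Theta$ and (ii) the fact that the limiting infinite critical point $x_\infty$ of the WKB $(\theta,\alpha)$-rays emanating from $x_0$ is necessarily the same for every $\theta \in \Theta$ (since as $\theta$ varies continuously over $\Theta$, the limit point $x_\infty$ varies continuously over the discrete set of poles of $\DD_0$ of order $\geq 2$ and must therefore be constant).

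First, I would choose the neighbourhood $U \subset X$ of $x_0$. Using the constancy of $x_\infty$ over $\Theta$, together with the structural fact (recalled in \autoref{210524105411}) that any generic WKB ray can be embedded in a WKB halfstrip domain flowing into $x_\infty$, I would argue by compactness of $\Theta$ that there is a single simply connected neighbourhood $U \subset X$ containing $x_0$, free of turning points, such that for \emph{every} $\theta \in \Theta$ every WKB $(\theta,\alpha)$-ray emanating from $U$ is generic and tends to $x_\infty$. Concretely, one can take $U$ to be a small enough WKB disc around $x_0$: for each $\theta$ Corollary~\ref{210519103256} gives such a neighbourhood $U_\theta$, and compactness of $\Theta$ together with openness of the ``flows into $x_\infty$'' condition on both $\theta$ and initial point lets one extract a common $U$.

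Second, I would verify the two analytic hypotheses of Theorem~\ref{210527195440} on the domain $V_{\Theta,\alpha,\RR} = V_{\Theta,\alpha} \cap \Phi^{-1}\bigl(\Cup_{\theta\in\Theta} \mathbb{H}_{\theta,\alpha}(\RR)\bigr)$, which for $\RR$ large is contained in an arbitrarily small punctured neighbourhood of $x_\infty$. Near $x_\infty$, $\DD_0$ behaves like $(x-x_\infty)^{-m}$, so $\sqrt{\DD_0}$ behaves like $(x-x_\infty)^{-m/2}$ and $\del_x \log \sqrt{\DD_0} = O\bigl((x-x_\infty)^{-1}\bigr)$; hence $\tfrac{1}{\sqrt{\DD_0}} \del_x \log \sqrt{\DD_0} = O\bigl((x-x_\infty)^{m/2-1}\bigr)$, which is bounded since $m \geq 2$. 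This gives condition (1). For condition (2), the pole-order hypothesis~\eqref{210514194406} says exactly that $(x-x_\infty)^{m/2} p$ and $(x-x_\infty)^m q$ admit uniform Gevrey asymptotics along $\bar{A}$ near $x_\infty$; dividing by $(x-x_\infty)^{m/2} \sqrt{\DD_0}$ (which is bounded) and $(x-x_\infty)^m \DD_0$ (which is bounded) respectively preserves the uniform asymptotics and yields the required estimates on $\tfrac{1}{\sqrt{\DD_0}} p$ and $\tfrac{1}{\DD_0} q$.

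With both hypotheses verified, Theorem~\ref{210527195440} applies directly and yields all the stated conclusions. The main obstacle, as I see it, is the uniformity-in-$\theta$ step: one must ensure that $U$, the number $\RR$, and ultimately the Borel disc diameter $\delta'$ can be chosen independently of $\theta \in \Theta$. This is exactly where compactness of the closed arc $\Theta$ is used, together with the constancy of $x_\infty$; once these two points are pinned down, the verification of conditions (1) and (2) is local near $x_\infty$ and is a routine consequence of the pole-order asymptotic assumption~\eqref{210514194406}. Alternatively, one can bypass the direct invocation of Theorem~\ref{210527195440} and instead mimic its proof: apply Corollary~\ref{210519103256} for each $\theta \in \Theta$ to get $\psi_{\theta,\alpha}$ on $U \times S'_\theta$, and glue using the uniqueness clause, with compactness of $\Theta$ ensuring a common diameter $\delta' \in (0,\delta]$ for the resulting sectorial domain $S' = \Cup_{\theta \in \Theta} S'_\theta$.
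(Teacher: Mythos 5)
Your proposal is correct and follows essentially the same route the paper implicitly intends: the corollary is meant to be established by verifying the hypotheses of \autoref{210527195440}, using the pole-order estimates near the ($\theta$-independent) infinite critical point $x_\infty$ exactly as in the discussion preceding \autoref{210519103256}, together with compactness of $\Theta$ to obtain a common neighbourhood $U$ and a common Borel-disc diameter. Your alternative route—gluing the single-direction solutions from \autoref{210519103256} via uniqueness, mirroring the proof of \autoref{210527195440}—is also a valid and equivalent way to phrase the same argument.
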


\paragraph{The exact WKB basis on a WKB strip domain.}
If the WKB trajectory through $x_0$ is generic, then \autoref{210519103256} yields two exact WKB solutions, one for each WKB ray emanating from $x_0$.
These exact WKB solutions define a basis of exact solutions on any WKB strip domain containing the WKB trajectory through $x_0$.
To be precise, we have the following statement.

\begin{cor}{210525175018}
Suppose the WKB $\theta$-trajectory passing through $x_0$ is generic, and let $U \subset X$ be any WKB $\theta$-strip containing $x_0$.
Let $x_{\pm\infty} \in \Complex_x$ be the two limiting infinite critical points of order $m_\pm \geq 2$.
In addition, we make the following assumption on the coefficients $p,q$:
\eqntag{
\label{210525175435}
\begin{gathered}
	(x-x_{\pm\infty})^{m_\pm/2} p (x,\hbar) \simeq (x-x_{\pm\infty})^{m_\pm/2} \hat{p} (x,\hbar)
\fullstop{,}
\\
	(x-x_{\pm\infty})^{m_\pm} q (x,\hbar) \simeq (x-x_{\pm\infty})^{m_\pm} \hat{q} (x,\hbar)
\end{gathered}
}
as $\hbar \to 0$ along $\bar{A}_\theta$, uniformly for all $x \in X$ sufficiently close to $x_{\pm\infty}$.
Then all the conclusions of \autoref{210116200501}, \autoref{210116200501}, \autoref{210603110237}, \autoref{210524175026}, and \autoref{210221112347} hold verbatim for both $\alpha = \pm$.
\end{cor}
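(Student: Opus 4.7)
The plan is to derive this corollary from a double application of \autoref{210519103256} (existence and uniqueness for generic WKB rays), once with $\alpha = +$ at the infinite critical point $x_{+\infty}$ of order $m_+$, and once with $\alpha = -$ at $x_{-\infty}$ of order $m_-$. The whole content of the corollary is that the geometric and analytic hypotheses of \autoref{210519103256} are satisfied symmetrically at both ends of the generic WKB trajectory through $x_0$, so the two exact WKB solutions can be produced on a common domain.

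First I would verify that the WKB $\theta$-strip $U$ is a valid choice of domain in the sense required by \autoref{210519103256} for each sign of $\alpha$. By the definition recalled in \autoref{210519211406}, a WKB $\theta$-strip domain is simply connected and is biholomorphic via the Liouville transformation $\Phi$ to an infinite strip in $\Complex_z$, so in particular it is free of turning points. Since the trajectory through $x_0$ is generic by hypothesis and all trajectories sweeping out a WKB strip domain are of the same WKB-theoretic type, every WKB $(\theta,+)$-ray emanating from any point of $U$ is generic and tends to $x_{+\infty}$, while every WKB $(\theta,-)$-ray is generic and tends to $x_{-\infty}$. This checks the geometric hypothesis of \autoref{210519103256} for each choice of $\alpha$ from every basepoint in $U$.

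Second, the asymptotic assumption \eqref{210514194406} in \autoref{210519103256} at the infinite critical point $x_{\alpha\infty}$ is precisely the content of the hypothesis \eqref{210525175435} of the present corollary restricted to the corresponding sign. Therefore the full hypotheses of \autoref{210519103256} are met for both $\alpha = +$ and $\alpha = -$. Each application produces an exact WKB solution $\psi_\alpha$ on some neighbourhood of $x_0$ which, by \autoref{210524175026}, extends holomorphically to the whole simply connected strip $U$. The asymptotic and Borel-summability statements of \autoref{210116200501}, \autoref{210603110237}, and \autoref{210221112347} transfer to $U$ on any compactly contained subset via the same extension argument that underlies \autoref{210524175026}.

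The only bookkeeping is to replace the two Borel discs $S'_{\theta,+}$ and $S'_{\theta,-}$ produced by the two applications of \autoref{210519103256} with a single Borel disc $S'_\theta$ of diameter equal to the smaller of the two, on which both $\psi_+$ and $\psi_-$ are simultaneously defined. The ``basis'' clause of \autoref{210116200501} follows automatically from the simultaneous existence of the pair $\psi_+, \psi_-$ with the required asymptotics. There is no real obstacle: the corollary is essentially an organizational statement that bundles the two one-sided existence results into a two-sided basis statement on a WKB strip.
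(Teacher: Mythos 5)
Your overall approach — a double application of \autoref{210519103256}, once per sign $\alpha$, with the observation that the geometric and analytic hypotheses are met symmetrically on a WKB $\theta$-strip — is exactly what the paper intends; the corollary is stated without a separate proof precisely because it is this kind of bookkeeping consequence of \autoref{210519103256}.

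There is, however, one step in your writeup that is not quite right and would be a genuine gap if taken literally. You say that each application of \autoref{210519103256} gives $\psi_\alpha$ only on a neighbourhood of $x_0$, and that the asymptotic and Borel-summability statements then ``transfer to $U$ via the same extension argument that underlies \autoref{210524175026}.'' That extension is plain analytic continuation of a solution of the ODE, and the paper explicitly warns (immediately after \autoref{210524175026}) that the asymptotic property \eqref{210516153319} is \emph{not} in general preserved under such continuation. So the asymptotics do not transfer for free. The correct reading is the one the paper spells out in the sentence following \autoref{210519103256}: the domain in that corollary ``can be chosen to be any simply connected domain containing $x_0$ which is free of turning points and such that every WKB $(\theta,\alpha)$-ray emanating from $U$ is generic and tends to $x_\infty$.'' A WKB $\theta$-strip satisfies this verbatim (which is exactly what your first paragraph checks), so one should apply \autoref{210519103256} directly with the whole strip $U$ as the domain, obtaining $\psi_\pm$ together with all the asymptotic and Borel-summability conclusions on all of $U$, with no extension step at all. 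With that correction your argument is complete and agrees with the paper's.
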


\subsection{Relation to Previous Work}
\label{210721173245}

In this final subsection, we explain how our results relate to other works about the existence of WKB solutions.

\begin{rem}[\textbf{Relation to the work of Nemes}]{210614164551}
In the recent paper \cite{MR4226390}, Nemes considers Schrödinger equations of the form\footnote{Explicitly, the notations compare as follows: equation (1.3) in \cite{MR4226390} is our equation \eqref{210614204252} with $u \leftrightarrow \hbar^{-1}$, $\xi \leftrightarrow z$, $\varphi \leftrightarrow a_1$, $\psi \leftrightarrow a_2$, $\WW (u, \xi) \leftrightarrow \WW (z, \hbar)$, and $\mathrm{\mathbf{G}} \leftrightarrow \Omega$.}
\eqntag{\label{210614204252}
	\hbar^2 \del^2_z \WW  = \big( 1 + a_1 (z) \hbar + a_2 (z) \hbar^2 \big) \WW
\fullstop{,}
}
where $a_1, a_2$ are holomorphic functions on a domain $\Omega \subset \Complex_z$ which contains an infinite horizontal strip.
Using a Banach fixed-point theorem argument, he shows (see \cite[Theorem 1.1]{MR4226390}) that under certain boundedness assumptions on the coefficients $a_1, a_2$ (see \cite[Conditions 1.1 and 1.2]{MR4226390}), the Schrödinger equation \eqref{210614204252} has (in our terminology) two exact solutions $\WW^\pm = \WW^\pm (x, \hbar)$, defined for all $(z, \hbar) \in \Omega^\pm_0 \times S'$ where $S'$ is a Borel disc with opening $A \coleq (-\pi/2, +\pi/2)$ and $\Omega_0^\pm \Subset \Omega$ are any properly contained horizontal halfstrips (unbounded respectively as $z \to \pm \infty$).
The following proposition asserts that this existence result is a corollary of our main theorem.
\end{rem}

\enlargethispage{10pt}
\begin{prop}{210614232200}
\autoref{210116200501} (or, more specifically, \autoref{210614221544}) implies Theorem 1.1 and the first assertion of Theorem 2.1 in \cite{MR4226390}.
\end{prop}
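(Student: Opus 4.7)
The plan is to directly translate Nemes's setup into ours and verify that his hypotheses imply the hypotheses of \autoref{210116200501} (in fact, its simplified form in \autoref{210614221544}). First I would write the equation \eqref{210614204252} in the form \eqref{210115121038} by putting $x \coleq z$, $p \coleq 0$, and $q (x, \hbar) \coleq -1 - a_1 (x) \hbar - a_2 (x) \hbar^2$, so that $q$ is polynomial in $\hbar$ with $q_0 = -1$, $q_1 = -a_1$, $q_2 = -a_2$ (and $q_k = 0$ for $k \geq 3$). Since $a_1, a_2$ are holomorphic on $\Omega$, the Gevrey condition \eqref{210115161312} is trivially satisfied. The leading-order characteristic discriminant is $\DD_0 = -4 q_0 = 4$ (constant), so $X \coleq \Omega$ is free of turning points and we may fix the branch $\sqrt{\DD_0} = 2$ with $\lambda_\pm = \pm 1$. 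The Liouville transformation normalised at any $x_0 \in \Omega$ is then the linear biholomorphism $\Phi (x) = 2(x - x_0)$.

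Next I would translate the WKB geometry. Since Nemes's sector is $A = (-\tfrac{\pi}{2}, +\tfrac{\pi}{2})$, the arc of copolar directions is $\Theta = \set{0}$; thus only the phase $\theta = 0$ is relevant. Because $\Phi$ is a real rescaling, the WKB $(0, \pm)$-rays emanating from a point $x \in \Omega$ are simply the horizontal half-lines $\set{x + t \mid t \in \Real_\pm}$. By hypothesis, $\Omega$ contains an infinite horizontal strip, and any properly contained horizontal halfstrip $\Omega_0^\pm \Subset \Omega$ can be chosen as our simply connected domain $U$. For any $x_0 \in \Omega_0^\pm$, every WKB $(0, \pm)$-ray emanating from a small neighbourhood $V \subset \Omega_0^\pm$ of $x_0$ remains in $\Omega$ and is complete (it maps to an entire positive or negative real ray in the $z$-plane).

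Now I would verify the two technical assumptions of \autoref{210116200501} using \autoref{210614221544}. Condition (1) is automatic since $\sqrt{\DD_0}$ is constant, so $\del_x \log \sqrt{\DD_0} = 0$. For condition (2), \autoref{210614221544} requires $|p_k| \leq \sqrt{\DD_0} = 2$ (trivial since $p \equiv 0$) and $|q_k| \leq \DD_0 = 4$ for every $k \geq 0$ on the halfstrip $V_{\theta,\alpha,\RR}$. Since $q_0 = -1$ and $q_k = 0$ for $k \geq 3$, only the boundedness of $a_1, a_2$ on a sufficiently distant tail of the halfstrip is required. This is precisely the content of Nemes's Conditions 1.1 and 1.2, which provide uniform integrability/boundedness estimates on $a_1, a_2$ along such halfstrips (possibly after rescaling $a_1, a_2$ by a harmless bounded factor to absorb the constant $4$); in particular, on the subdomain $V_{\theta,\alpha,\RR}$ obtained by truncating far enough toward $\pm\infty$, the required pointwise bounds are satisfied.

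Finally, with both hypotheses in place, \autoref{210116200501} delivers a unique exact WKB solution $\psi_\alpha$ on $V \times S'_\theta$ for each $\alpha = \pm$, normalised at $x_0$ and with formal WKB asymptotics along $\bar{A}$. By the uniqueness statement in \autoref{210524175026}, this solution extends holomorphically to all of $\Omega_0^\pm \times S'_\theta$ and is characterised by its asymptotic behaviour. Up to the normalisation constant (which only affects the prefactor), $\psi_\pm$ must coincide with Nemes's $W^\pm$, since both are the unique holomorphic solutions enjoying the same exponential asymptotic expansion on $\Omega_0^\pm \times S'$. The only subtlety, and the one point that requires attention, is the precise comparison between Nemes's boundedness hypotheses and the pointwise bound $|q_k| \leq \DD_0$ demanded by \autoref{210614221544}; this is the sole nontrivial bookkeeping step, but it is a matter of unpacking definitions rather than of analysis.
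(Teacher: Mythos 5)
Your argument for the ``Theorem 1.1'' half of the proposition is essentially the same as the paper's, and correct in essence: you set up $p = 0$, $q = -(1 + a_1 \hbar + a_2 \hbar^2)$ so that $\DD_0 = 4$ is a nonzero constant, note that condition (1) of \autoref{210116200501} is then vacuous, and invoke \autoref{210614221544} to reduce condition (2) to the boundedness of $a_1, a_2$ on the relevant halfstrips, which Nemes's Conditions 1.1 and 1.2 provide. (Incidentally, the paper's own proof states $\DD_0 = 1$; your value of $4$ is the correct one for the normalisation conventions in this paper, though this does not affect either argument since all that matters is that $\DD_0$ is a nonzero constant.) One small misreading: \autoref{210614221544} does not demand the literal pointwise inequality $|q_k| \leq \DD_0$ that you then try to fix by ``rescaling by a harmless bounded factor''; it only asks that $q_k / \DD_0$ (equivalently, $q_k$ itself, since $\DD_0$ is constant) be bounded on $V_{\theta,\alpha,\RR}$. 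This is just the statement that condition~(2), which is a uniform Gevrey asymptotic statement for $q/\DD_0$, reduces for $\hbar$-polynomial $q$ to boundedness of the finitely many coefficients. Reading it as a literal bound by $4$ introduces an unnecessary complication.

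The real gap is that you have proved only half of the proposition. The statement asserts that \autoref{210116200501} implies \emph{both} Theorem~1.1 \emph{and} the first assertion of Theorem~2.1 in \cite{MR4226390}, and your proof never addresses the latter. The paper closes this by observing that the first assertion of Theorem~2.1 in \cite{MR4226390} (which concerns convergence of the Borel transform of the formal WKB series) is a special case of \autoref{210603145334}, i.e.\ the lemma establishing that the formal Borel transform $\hat{\sigma}_\alpha$ of $\hat{\SS}_\alpha$ --- and hence $\hat{\Borel}[\hat{\Psi}_\alpha]$ --- is a locally uniformly convergent power series in $\xi$ under the hypotheses of \autoref{210116200501}. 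You would need to add a sentence to this effect, or some equivalent appeal to \autoref{210221112347}, to actually establish what is claimed.
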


\begin{proof}
For the equation \eqref{210614204252}, the leading-order characteristic discriminant $\DD_0 = 1$, so condition (1) of \eqref{210116200501} is vacuously true.
The boundedness Conditions 1.1 and 1.2 in \cite{MR4226390} imply in particular that the coefficients $a_1, a_2$ are bounded on $\Omega^\pm_0$, which by the discussion in \autoref{210614221544} means condition (2) of \autoref{210116200501} is met.
So Theorem 1.1 in \cite{MR4226390} follows.
Finally, the first assertion of Theorem 2.1 in \cite{MR4226390} is a special case of \autoref{210603145334}.
\end{proof}

\paragraph{}
Equations of the form \eqref{210614204252} can be related by means of a Liouville transformation $z = \Phi (x)$ from \eqref{210119093734} to Schrödinger equations of the form \eqref{210115121042} with potentials that are at most quadratic in $\hbar$; i.e., $\QQ (x,\hbar) = \QQ_0 (x) + \hbar \QQ_1 (x) + \hbar^2 \QQ_2 (x)$.
Explicitly, the unknown variables $\psi$ and $\WW$ are related by
\eqn{
	\psi (x, \hbar) = \QQ_0^{-1/4} (x) \WW \big( \tfrac{1}{2} \Phi (x), \hbar \big)
\fullstop{,}
}
and the coefficients are related by 
\eqn{
	a_1 \big( \tfrac{1}{2}\Phi(x) \big) \coleq \frac{ \QQ_1 (x) }{\QQ_0 (x) }
\qtext{and}
	a_2 \big( \tfrac{1}{2}\Phi (x) \big) \coleq \frac{\QQ_2 (x)}{\QQ_0 (x)} 
		- \frac{1}{\QQ^{3/4}_0 (x)} \del_x^2 \left( \frac{1}{\QQ_0^{1/4} (x)} \right)
\fullstop
}
However, note that this transformation of the unknown variable involves a choice of a fourth-root branch $\QQ_0^{1/4}$ and, more importantly, even if $\WW = \WW (z, \hbar)$ is a solution of \eqref{210614204252} for $z$ in some domain $\Omega^\pm_0 \subset \Omega$, then $\psi (x, \hbar)$ is a well-defined solution only if $\WW (z_1, \hbar) = \WW (z_2, \hbar)$ whenever $\Phi^{-1} (z_1) = \Phi^{-1} (z_2)$.
The explicit approach pursued in our paper (aided specifically by the recursive formula of \autoref{210527135451}) makes this verification obvious.

\begin{rem}[\textbf{Relation to the work of Koike-Schäfke}]{210612121553}
Some of the results in a number of references mentioned in the introduction (see paragraph 5 of \autoref{210614233439}) rely on the statement of Theorem 2.17 presented in \cite{MR3280000} from an unpublished work of Koike and Schäfke on the Borel summability of formal WKB solutions of Schrödinger equations with polynomial potential.
The following proposition asserts that our results imply part (a) of Theorem 2.17 in \cite{MR3280000}.
Part (b) of Theorem 2.17 in \cite{MR3280000} will be derived from a more general result in \cite{MY210604104440}.
It is also stated in Theorem 2.18 in \cite{MR3280000} that Theorem 2.17 in \cite{MR3280000} holds for any compact Riemann surface: this theorem will also be derived as a special case of a more general result in \cite{MY210517181728}.
\end{rem}

\begin{prop}{210612144425}
\autoref{210519103256} implies part (a) of Theorem 2.17 in \cite{MR3280000}. 
\end{prop}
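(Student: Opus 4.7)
The plan is to identify the hypothesis of Theorem 2.17(a) in \cite{MR3280000} as a special case of our framework and then directly invoke \autoref{210519103256} together with \autoref{210221112347}. Theorem 2.17(a) in \cite{MR3280000} concerns Schrödinger equations of the form $(\hbar^2 \del_x^2 - \QQ)\phi = 0$ whose potential $\QQ = \QQ(x)$ is a polynomial in $x$ independent of $\hbar$, and asserts the Borel summability of the formal WKB solutions normalized at a regular point inside any \emph{Stokes region} (a connected component of the complement of the Stokes graph in $\Complex_x$) in a direction $\theta$ that is transverse to the Stokes curves of that region.

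In our notation, this situation corresponds to $p = \hat{p} = 0$ and $q = \hat{q} = -\QQ$, so that $\DD_0 = 4\QQ$ and the turning points coincide with the zeros of $\QQ$. For $\QQ$ of degree $d \geq 2$, the only infinite critical point is $x_\infty = \infty$, which is a pole of $\DD_0$ of order $d$. Inside any Stokes region, every WKB $\theta$-trajectory is by definition generic: both rays tend to $x_\infty$, and the Stokes graph is precisely the locus where this property degenerates. This verifies the WKB-geometric hypothesis of \autoref{210519103256} for every regular basepoint $x_0$ in the region and for the chosen direction $\theta$.

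It remains to verify the analytic conditions of \autoref{210519103256} (equivalently, the hypotheses of \autoref{210116200501} as simplified in \autoref{210614221544} and condition \eqref{210514194406}). As recorded in \autoref{210614221544}, for a Schrödinger equation with $\hbar$-independent potential the condition \eqref{210514194406} on the asymptotic behavior of $p$ and $q$ near $x_\infty$ is vacuous, since $p,q$ are actually holomorphic in $\hbar$ and equal to their formal counterparts. Condition (1) of \autoref{210116200501} amounts to boundedness of $(\del_x \log \sqrt{\DD_0})/\sqrt{\DD_0} = (\QQ'/2\QQ)/(2\sqrt{\QQ})$ on a sectorial neighborhood of $x_\infty$; for polynomial $\QQ$ of degree $d$, this quantity decays like $|x|^{-1-d/2}$ as $|x| \to \infty$ and is in particular bounded there. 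Applying \autoref{210519103256} and then \autoref{210221112347} produces exact WKB solutions $\psi_\pm$ on the Stokes region whose exponential asymptotics as $\hbar \to 0$ along $\bar{A}_\theta$ are given by $\hat{\psi}_\pm$, and which are identified with the locally uniform Borel resummation of $\hat{\psi}_\pm$ in the direction $\theta$; this is precisely the content of Theorem 2.17(a) in \cite{MR3280000}. The main bookkeeping step to carry out carefully, and the only real point of friction, is to confirm that the direction $\theta$ for which summability is asserted in \cite{MR3280000} lies in the arc $\Theta$ of copolar directions permitted by our genericity hypothesis on the WKB rays; this reduces to the standard dictionary between Stokes directions in the sense of \cite{MR3280000} and critical phases of the foliation by WKB trajectories.
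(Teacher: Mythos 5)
Your proposal misidentifies the setup of Theorem 2.17 in \cite{MR3280000}, and this leads you to skip the part of the argument that is actually nontrivial. You state that Theorem 2.17(a) concerns potentials $\QQ = \QQ(x)$ that are polynomials in $x$ independent of $\hbar$, conclude that conditions \eqref{210514194406} and \eqref{210612114403} are vacuous, and verify only condition (1) (boundedness of $\DD_0^{-1/2}\del_x \log \sqrt{\DD_0}$). But the actual statement in \cite{MR3280000} is considerably more general: the potential $\QQ(x,\hbar) = \QQ_0(x) + \QQ_1(x)\hbar + \cdots$ is a polynomial in $\hbar$ with \emph{rational} coefficients $\QQ_k(x)$, and the hypotheses of Theorem 2.17 impose specific constraints on the pole orders of the $\QQ_k$ at each pole $x_\infty$ (Assumption 2.5 and the third bullet point of Assumption 2.3 in \cite{MR3280000}). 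Under this setting, condition \eqref{210514194406} is \emph{not} vacuous; it reduces via \autoref{210612114025} to \eqref{210612114403}, and the content of the proof is to check that the pole-order hypotheses of \cite{MR3280000} do in fact imply \eqref{210612114403}.

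Concretely, the missing verification is as follows. One must first observe that the third bullet point of Assumption 2.3 in \cite{MR3280000}, which says $\QQ_0$ has pole order $m \geq 2$ at $x_\infty$, is exactly the requirement that $x_\infty$ be an infinite critical point, as demanded by the genericity hypothesis in \autoref{210519103256}. One must then check that parts (i), (ii), (iii) of Assumption 2.5 in \cite{MR3280000}, which bound the pole orders of the $\QQ_k$ for $k \geq 1$, are all implied by the single requirement in \eqref{210612114403} that each $\QQ_k$ have pole order at most $m$ at $x_\infty$; the only mildly subtle point is Assumption 2.5(ii), which imposes a bound of the form $1 + \tfrac{1}{2}m$, and this lies below $m$ whenever $m \geq 3$. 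Your dictionary remark about Stokes directions versus the arc $\Theta$ is a legitimate bookkeeping concern, but it is not where the difficulty of this proposition resides. The geometric identification (Stokes region as maximal WKB strip domain, poles of $\DD_0$ as infinite critical points) that you set up is correct and matches the paper's proof.
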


\begin{proof}
The main assumption for Theorem 2.17 in \cite{MR3280000} is that the potential $\QQ$ is a polynomial in $\hbar$ with rational coefficients $\QQ_k$ whose behaviour at the poles is as stated in Assumption 2.5 and the third bullet point of Assumption 2.3 in \cite{MR3280000}.
We claim that these assumptions are a special case of \eqref{210612114403} in \autoref{210612114025}

First, let us explain how the notation in \cite{MR3280000} compares with ours.
In \cite{MR3280000}, the equation variable $z$ is the same as our variable $x$, and the large parameter $\eta$ is our $\hbar^{-1}$.
The authors consider Schrödinger equations of the form \eqref{210115121042} but where $\QQ (x, \hbar) = \QQ_0 (x) + \QQ_1 (x) \hbar + \cdots$ is a polynomial in $\hbar$ (cf. \cite[equation (2.2)]{MR3280000}).
This is the situation in \autoref{210612114025} with $p_k = 0$ for all $k$.
In the statement of Theorem 2.17 (a) in \cite{MR3280000}, the chosen point $z \in \DD$ in a Stokes region $\DD$ ($=$ a maximal WKB strip domain) is our regular basepoint $x_0 \in X$.

\enlargethispage{10pt}
Let $x_\infty$ be the pole in question either on the boundary of $X$ or at infinity in $\Complex_x$.
By the assumptions in \autoref{210519103256}, $x_\infty$ is an infinite critical point, which means in particular that the pole order of $\QQ_0$ at $x_\infty$ is $m \geq 2$, which coincides with the third bullet point of Assumption 2.3 in \cite{MR3280000}.
Parts (i) and (iii) of Assumption 2.5 in \cite{MR3280000} are also clearly included in \eqref{210612114403}.
Finally, part (ii) of Assumption 2.5 in \cite{MR3280000} is included in \eqref{210612114403} because $1 + \tfrac{1}{2} m < m$ whenever $m \geq 3$.
\end{proof}

\begin{appendices}
\appendixsectionformat
\section{Basics of Usual Asymptotics}
\setcounter{section}{1}
\setcounter{paragraph}{0}
\label{210217113936}

\paragraph{Sectorial domains.}
\label{210217114252}
Fix a circle $\Sphere^1 \coleq \Real / 2\pi \Integer$ once and for all.
We refer to its points as \dfn{directions}, and we think of it as the set of directions at the origin in $\Complex_\hbar$ when $\hbar$ is written in polar coordinates.
More precisely, we consider the \textit{real-oriented blowup} of the complex plane $\Complex_\hbar$ at the origin, which by definition is the bordered Riemann surface $[\Complex_\hbar : 0] \coleq \Real_+ \times \Sphere^1$ with coordinates $(r, \theta)$, where $\Real_+$ is the nonnegative reals.
The projection $[\Complex_\hbar : 0] \to \Complex_\hbar$ sends $(r, \theta) \mapsto r e^{i\theta}$, which is a biholomorphism away from the circle of directions.
See \autoref{210618080005} for an illustration.
\begin{figure}[t]
\centering
\includegraphics{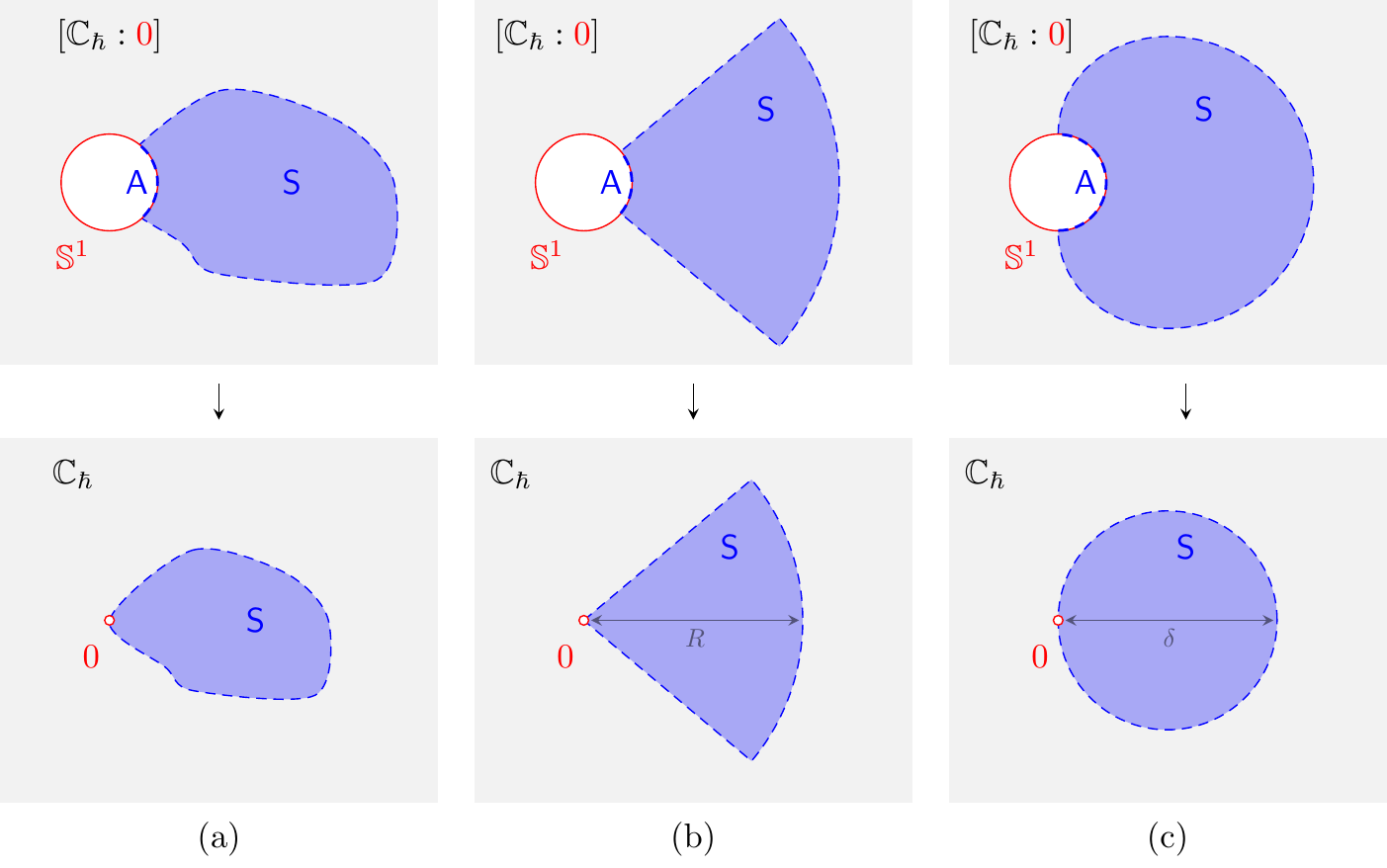}
\caption{Examples of sectorial domains.
(a): an arbitrary sectorial domain with opening $A$.
(b): straight sector with opening $A$ and some radius $\RR > 0$.
(c): a Borel disc with diameter $\delta > 0$.}
\label{210618080005}
\end{figure}

A \dfn{sectorial domain} near the origin in $\Complex_\hbar$ is a simply connected domain $S \subset \Complex_\hbar^\ast = \Complex_\hbar \setminus \set{0}$ whose closure $\bar{S}$ in $[\Complex_\hbar : 0]$ intersects the boundary circle $\Sphere^1$ in a closed arc $\bar{A} \subset \Sphere^1$ with nonzero length.
In this case, the open arc $A$ is called the \dfn{opening} of $S$, and its length $|A|$ is called the \dfn{opening angle} of $S$.
A \dfn{proper subsectorial domain} $S_0 \subset S$ is one whose closure $\bar{S}_0$ in $[\Complex_\hbar : 0]$ is contained in $S$.
This means in particular that the opening $A_0$ of $S_0$ is compactly contained in $A$; i.e., $\bar{A}_0 \subset A$.

\enlargethispage{20pt}
The simplest example of a sectorial domain $S$ is of course a \textit{straight sector} of radius $\delta > 0$ and opening $A$, which is the set of points $\hbar \in \Complex_\hbar$ satisfying $\arg (\hbar) \in A$ and $0 < |\hbar| < \RR $.
The most typical example of a sectorial domain encountered in this paper is a \dfn{Borel disc} of \dfn{diameter} $\delta >0$:
\eqntag{
	S = \set{ \hbar \in \Complex_\hbar ~\big|~ \Re (1/\hbar) > 1/\delta }
\fullstop
}
Its opening is $A = (-\pi/2, +\pi/2)$.
Notice that any straight sector with opening $A$ contains a Borel disc, but a Borel disc contains no straight sectors with opening $A$.
It is also not difficult to see that any sectorial domain with opening $A$ contains a Borel disc.
More generally, we will consider Borel discs bisected by some direction $\theta \in \Sphere^1$:
\eqntag{
	S_\theta = \set{ \hbar \in \Complex_\hbar ~\big|~ \Re (e^{i\theta}/\hbar) > 1/\delta }
\fullstop
}

\subsection{Poincaré Asymptotics in One Dimension}
\label{210220155700}

First, for the benefit of the reader and to fix some notation, let us briefly recall some basic notions from asymptotic analysis in one complex variable.
We denote by $\Complex \bbrac{\hbar}$ the ring of formal power series in $\hbar$, and by $\Complex \set{\hbar}$ the ring of convergent power series in $\hbar$.
Fix an arc of directions $A \subset \Sphere^1$.
We denote by $\cal{O} (S)$ the ring of holomorphic functions on a sectorial domain $S \subset \Complex_\hbar$ with opening $A$.

\paragraph{Sectorial germs.}
\label{210225113610}
For the purpose of asymptotic behaviour as $\hbar \to 0$, the actual nonzero radial size of $S$ is irrelevant.
So it is better to consider \textit{germs} of holomorphic functions defined on sectorial domains with opening $A$, formally defined next.

\begin{defn}{210616134514}
A \dfn{sectorial germ} on $A$ is an equivalence class of pairs $(f,S)$ where $S \subset \Complex_\hbar$ is a sectorial domain with opening $A$ and $f$ is a holomorphic function on $S$.
Two such pairs $(f,S)$ and $(f',S')$ are considered equivalent if the intersection ${S \cap S'}$ contains a sectorial domain $S''$ with opening $A$ on which $f$ and $f'$ are equal.
\end{defn}

Sectorial germs on $A$ form a ring which we denote by $\cal{O} (A)$.
For any sectorial domain $S \subset \Complex_\hbar$ with opening $A$, there is a map $\cal{O} (S) \to \cal{O} (A)$ that sends a holomorphic function $f$ to the corresponding sectorial germ.

\emph{Terminology:} For the benefit of the reader who is uncomfortable with the language of germs, we stress that every sectorial germ on $A$ can be represented by an actual holomorphic function $f \in \cal{O} (S)$ on some sectorial domain $S$.
In fact, we normally denote the equivalence class of any pair $(f, S)$ simply by ``$f$'' and we often even refer to it as a \textit{holomorphic function on $A$}.

\paragraph{Poincaré asymptotics.}
\label{210225114212}
Recall that a holomorphic function $f \in \cal{O} (S)$ defined on a sectorial domain $S$ is said to admit (\dfn{Poincaré}) \dfn{asymptotics as $\hbar \to 0$ along $A$} if there is a formal power series $\hat{f} (\hbar) \in \Complex \bbrac{\hbar}$ such that the order-$n$ remainder
\eqntag{\label{200720152534}
	\RR_n (\hbar) \coleq f(\hbar) - \sum_{k=0}^{n-1} f_k \hbar^k
}
is bounded by $\hbar^n$ for all sufficiently small $\hbar \in S$.
That is, for every ${n \geq 0}$, and every compactly contained subarc $A_0 \Subset A$, there is a sectorial subdomain $S_0 \subset S$ with opening $A_0$ and a real constant $\CC_{n,0} > 0$ such that
\eqntag{\label{200720153758}
	\big| \RR_n (\hbar) \big| \leq \CC_{n,0} |\hbar|^n
}
for all $\hbar \in S_0$.
The constants $\CC_{n,0}$ may depend on $n$ and the opening $A_0$.
If this is the case, we write
\eqntag{\label{200720175735}
	f (\hbar) \sim \hat{f} (\hbar)
\qqqquad
	\text{as $\hbar \to 0$ along $A$\fullstop}
}
Sectorial germs with this property form a subring $\cal{A} (A) \subset \cal{O}(A)$, and the asymptotic expansion map defines a ring homomorphism $\ae : \cal{A}(A) \to \Complex \bbrac{\hbar}$.

\paragraph{Asymptotics along a closed arc.}
\label{210225134124}
Furthermore, we will write
\eqntag{\label{210220160756}
	f (\hbar) \sim \hat{f} (\hbar)
\qqqquad
	\text{as $\hbar \to 0$ along $\bar{A}$\fullstop{,}}
}
if the constants $\CC_{n,0}$ in \eqref{200720153758} can be chosen uniformly for all compactly contained subarcs $A_0 \Subset A$ (i.e., independent of $A_0$ so that $\CC_{n,0} = \CC_n$ for all $n$).
Obviously, if a function $f$ admits asymptotics along $A$, then it admits asymptotics along $\bar{A}_0$ for any $A_0 \Subset A$.
Sectorial germs with this property form a subring $\cal{A} (\bar{A}) \subset \cal{A}(A)$.

For example, the function $e^{-1/\hbar}$ admits asymptotics as $\hbar \to 0$ along the open arc $A = (-\pi/2, +\pi/2)$ (where it is asymptotic to $0$), but \textit{not} along the closed arc $\bar{A} = [-\pi/2, +\pi/2]$, because the constants $\CC_{n,0}$ in the asymptotic estimates \eqref{200720153758} blow up as $A_0$ approaches $A$.
Thus, $e^{-1/\hbar}$ is an element of $\cal{A} (A)$ but not of $\cal{A} (\bar{A})$.

\subsection{Uniform Poincaré Asymptotics}
\label{210225141606}

Now, suppose we also have a domain $U \subset \Complex_x$.

\paragraph{Power series with holomorphic coefficients.}
\label{200721171954}
We denote by $\cal{O} (U) \bbrac{\hbar}$ the set of formal power series in $\hbar$ with holomorphic coefficients on $U$:
\eqntag{\label{200624150851}
	\hat{f} (x, \hbar) = \sum_{k=0}^\infty f_k (x) \hbar^k 
	\quad \in \quad
	\cal{O} (U) \bbrac{\hbar}
\fullstop
}
Let us also introduce the subsets $\cal{O}_\rm{u} (U) \set{\hbar}$ and $\cal{O} (U) \set{\hbar}$ of $\cal{O} (U) \bbrac{\hbar}$ consisting of, respectively, uniformly and locally-uniformly convergent power series on $U$.
We will not have any use for pointwise convergence (or other pointwise regularity statements), so we do not introduce any special notation for those.

\paragraph{Semisectorial germs.}
\label{210226112200}
Since we are only interested in keep track of the asymptotic behaviour as $\hbar \to 0$ along $A$, we focus our attention on holomorphic functions of $(x, \hbar)$ which behave like sectorial germs in $\hbar$.
More precisely, we introduce the following definition.


\begin{defn}{210226161539}
A \dfn{semisectorial germ} on $(U; A)$ is an equivalence class of pairs $(f,\mathbb{U})$ where $f$ is a holomorphic function on the domain $\mathbb{U} \subset U \times \Complex_\hbar^\ast$ with the following property: for every point $x_0 \in U$, there is a neighbourhood $U_0 \subset U$ of $x_0$ and a sectorial domain $S_0$ with opening $A$ such that $U_0 \times S_0 \subset \mathbb{U}$.
Any two such pairs $(f,\mathbb{U})$ and $(f',\mathbb{U}')$ are considered equivalent if, for every $x_0 \in U$, the intersection $S_0 \cap S'_0$ contains a sectorial domain $S''_0$ with opening $A$ such that the restrictions of $f$ and $f'$ to $U_0 \times S''_0$ are equal.
\end{defn}

\emph{Terminology:} We will often abuse terminology and refer to semisectorial germs $f \in \cal{O} (U;A)$ as \textit{holomorphic functions on $(U;A)$}.

Typically, $\mathbb{U}$ is a product domain $U \times S$ for some $S$ or a (possibly countable) union of product domains.
Notice that in particular the projection of $\mathbb{U}$ onto the first component is necessarily $U$.
Semisectorial germs form a ring which we denote by $\cal{O} (U; A)$.
For any domain $\UUU \subset \Complex_{x\hbar}^2$ as above, there is a map $\cal{O} (\UUU) \to \cal{O} (U;A)$ sending a holomorphic function $f$ on $\UUU$ to the corresponding semisectorial germ; i.e., the equivalence class of $f$ in $\cal{O} (U;A)$.
There is also a map $\cal{O} (U) \set{\hbar} \to \cal{O} (U) (A)$ for any $A$ given by restriction.

\paragraph{Uniform Poincaré asymptotics.}
\label{200721171635}
Consider the product space $\UUU = U \times S$ where $S$ is a sectorial domain with opening $A$, or more generally let $\UUU$ be a domain of the form described in \autoref{210226161539}.
Recall that a holomorphic function $f \in \cal{O} (\UUU)$ is said to admit (pointwise Poincaré) \dfn{asymptotics as $\hbar \to 0$ along $A$} if there is a formal power series $\hat{f} (x, \hbar) \in \cal{O} (U) \bbrac{\hbar}$ such that for every ${n \geq 0}$, every $x \in U$, and every compactly contained subarc $A_0 \Subset A$, there is a sectorial domain $S_0$ with opening $A_0$ and a real constant $\CC_{n,x,0} > 0$ which satisfies the following inequality:
\eqntag{\label{200305151735}
	\Big| \RR_n (x, \hbar) \Big|
		= \left| f(x, \hbar) - \sum_{k=0}^{n-1} f_k (x) \hbar^k \right|
		\leq \CC_{n,x,0} |\hbar|^n
}
for all $\hbar \in S_0$.
The constant $\CC_{n,x,0}$ may depend on $n,x$, and the opening $A_0$.
We say that $f$ admits \dfn{uniform asymptotics} \textit{on $U$ as $\hbar \to 0$ along $A$} if $\CC_{n,x,0}$ can be chosen to be independent of $x$ (i.e., so that $\CC_{n,x,0} = \CC_{n,0}$).
We also say that $f$ admits \dfn{locally uniform asymptotics} \textit{on $U$ as $\hbar \to 0$ along $A$} if every point in $U$ has a neighbourhood on which $f$ admits uniform asymptotics.
In these cases, we write, respectively,
\eqnstag{\label{210225121434}
	f (x,\hbar) &\sim \hat{f} (x,\hbar)
\qqqquad
	\text{as $\hbar \to 0$ along $A$, unif. $\forall x \in U$\fullstop{;}}
\\
\label{210225121453}
	f (x,\hbar) &\sim \hat{f} (x,\hbar)
\qqqquad
	\text{as $\hbar \to 0$ along $A$, loc.unif. $\forall x \in U$\fullstop}
}
We denote the subrings consisting of semisectorial germs satisfying \eqref{210225121434} or \eqref{210225121453} respectively by $\cal{A}_\rm{u} (U; A)$ and $\cal{A} (U; A)$.
The asymptotic expansion map defines a ring homomorphism $\ae : \cal{A} (U;A) \to \cal{O} (U) \bbrac{\hbar}$.
An elementary application of the Cauchy integral formula shows that the ring $\cal{A} (U; A)$ (but not $\cal{A}_\rm{u} (U; A)$) is preserved under differentiation with respect to $x$; that is, $\del_x \big( \cal{A} (U; A) \big) \subset \cal{A} (U; A)$.

\paragraph{Uniform Poincaré asymptotics along a closed arc.}
\label{210225142705}
If in addition to \eqref{210225121434} or \eqref{210225121453}, the constants $\CC_{n,x,0}$ can be chosen uniformly for all $A_0 \Subset A$ (i.e., so that $\CC_{n,x,0} = \CC_{n,x}$ for all $n$ and $x$), we will write, respectively,
\eqnstag{\label{210225123426}
	f (x,\hbar) &\sim \hat{f} (x,\hbar)
\qqqquad
	\text{as $\hbar \to 0$ along $\bar{A}$, unif. $\forall x \in U$\fullstop{;}}
\\
\label{210225123428}
	f (x,\hbar) &\sim \hat{f} (x,\hbar)
\qqqquad
	\text{as $\hbar \to 0$ along $\bar{A}$, loc.unif. $\forall x \in U$\fullstop}
}
Holomorphic functions satisfying these conditions form subrings which we denote respectively by $\cal{A}_\rm{u} (U; \bar{A}) \subset \cal{A}_\rm{u} (U; A) $ and $\cal{A} (U; \bar{A}) \subset \cal{A} (U; A)$.
Again, the ring $\cal{A} (U; \bar{A})$ (but not the ring $\cal{A}_\rm{u} (U; \bar{A})$) is preserved by differentiation: $\del_x \big( \cal{A} (U; \bar{A}) \big) \subset \cal{A} (U; \bar{A})$.

\subsection{Gevrey Asymptotics}
\label{210224181219}

\enlargethispage{15pt}
For the purposes of the main construction in this paper, the notion of Poincaré asymptotics is too weak.
A powerful and systematic way to refine Poincaré asymptotics is known as \textit{Gevrey asymptotics}.
The basic principle behind it is to strengthen the asymptotic requirements by specifying the dependence on $n$ of the constants $\CC_{n,0}$ in \eqref{200720153758} and $\CC_{n,x,0}$ in \eqref{200305151735}.
In this paper, we use only the simplest Gevrey regularity class (more properly known as \textit{1-Gevrey asymptotics}) which requires the asymptotic bounds to grow essentially like $n!$.
See, for example, \cite[\S1.2]{MR3495546} for a more general introduction to Gevrey asymptotics.
As before, for the benefit of the reader we first recall Gevrey asymptotics in one complex dimension.

\paragraph{Gevrey asymptotics in one dimension.}
\label{200722160857}
A holomorphic function $f \in \cal{O} (S)$ defined on a sectorial domain $S$ with opening $A$ is said to admit \dfn{Gevrey asymptotics as $\hbar \to 0$ along $A$} if the constants $\CC_{n,0}$ in \eqref{200720153758} depend on $n$ like $\CC_0 \MM_0^n n!$.
More explicitly, there is a formal power series $\hat{f} (\hbar) \in \Complex \bbrac{\hbar}$ such that for every compactly contained subarc $A_0 \Subset A$, there is a sectorial domain $S_0 \subset S$ with opening $A_0 \Subset A$ and real constants $\CC_0, \MM_0 > 0$ which for all $n \geq 0$ give the bounds
\eqntag{\label{200722160158}
	\big| \RR_n (\hbar) \big| \leq \CC_0 \MM_0^n n! |\hbar|^n
}
for all $\hbar \in S_0$.
We will use the symbol ``$\simeq$'' to distinguish Gevrey asymptotics from Poincaré asymptotics.
Thus, if $f$ admits Gevrey asymptotics as $\hbar \to 0$ along $A$, we will write
\eqntag{\label{210225131044}
	f (\hbar) \simeq \hat{f} (\hbar)
\qqqquad
	\text{as $\hbar \to 0$ along $A$\fullstop}
}
We denote the subring of sectorial germs satisfying this property by $\cal{G} (A) \subset \cal{A} (A)$.
If in addition to \eqref{200722160158}, the constants $\CC_0, \MM_0$ can be chosen uniformly for all $A_0 \Subset A$, then we will write
\eqntag{\label{210225134416}
	f (\hbar) \simeq \hat{f} (\hbar)
\qqqquad
	\text{as $\hbar \to 0$ along $\bar{A}$\fullstop}
}
We denote the subring of sectorial germs satisfying this property by $\cal{G} (\bar{A}) \subset \cal{G} (A)$.
Explicitly, $f \in \cal{O} (\bar{A})$ if and only if there is a sectorial domain $S_0$ with opening $A$ and real constants $\CC, \MM > 0$ which give the following bounds for all $n \geq 0$ and $\hbar \in S_0$:
\eqntag{\label{210225134829}
	\big| \RR_n (\hbar) \big| \leq \CC \MM^n n! |\hbar|^n
\fullstop
}

\paragraph{Gevrey series in one dimension.}
\label{200722151439}
It is easy to see that if $f \in \cal{G} (A)$ then the coefficients of its asymptotic expansion also grow essentially like $n!$.
By definition, a formal power series $\hat{f} (\hbar) = \sum f_n \hbar^n \in \Complex \bbrac{\hbar}$ is a \dfn{Gevrey power series} if there are constants $\CC, \MM > 0$ such that for all $n \geq 0$,
\eqntag{\label{200723182724}
	| f_n | \leq \CC \MM^n n!
\fullstop
}
Gevrey series form a subring $\Gevrey \bbrac{\hbar} \subset \Complex \bbrac{\hbar}$, and the asymptotic expansion map restricts to a ring homomorphism $\ae : \cal{G}(A) \to \Gevrey \bbrac{\hbar}$.
The ring $\cal{G} (\bar{A})$ for an arc with opening $|A| = \pi$ plays a central role in Gevrey asymptotics because it is possible to identify and describe a subclass $\bar{\Gevrey} \bbrac{\hbar} \subset \Gevrey \bbrac{\hbar}$ such that the asymptotic expansion map restricts to a bijection $\ae : \cal{G} (\bar{A}) \iso \bar{\Gevrey} \bbrac{\hbar}$.
This identification is done using a theorem of Nevanlinna and requires techniques from the Borel-Laplace theory, see \autoref{210616130753}.

\paragraph{Examples in one dimension.}
\label{210617074256}
Any function which is holomorphic at $\hbar = 0$ automatically admits Gevrey asymptotics as $\hbar \to 0$ along any arc: its asymptotic expansion is nothing but its convergent Taylor series at $\hbar =  0$ whose coefficients necessarily grow at most exponentially.
Also, if a function admits Gevrey asymptotics as $\hbar \to 0$ along a strictly larger arc $A'$ which contains the closed arc $\bar{A}$, then it automatically admits Gevrey asymptotics along $\bar{A}$.

The function $e^{-1/\hbar}$ admits Gevrey asymptotics as $\hbar \to 0$ in the right halfplane where it is asymptotic to $0$.
So if $A = (-\pi/2, +\pi/2)$, then $e^{-1/\hbar} \in \cal{G} (A)$, but $e^{-1/\hbar} \not\in \cal{G} (\bar{A})$ as discussed before.
On the other hand, the function $e^{-1/\hbar^\alpha}$ for any real number $0 < \alpha < 1$ does not admit Gevrey asymptotics as $\hbar \to 0$ along any arc.

\paragraph{Gevrey series with holomorphic coefficients.}
\label{210225135829}
Now, suppose again that in addition we have a domain $U \subset \Complex_x$.
A formal power series $\hat{f} (x, \hbar) \in \cal{O} (U) \bbrac{\hbar}$ on $U$ is called a \dfn{Gevrey series} if, for every $x \in U$, there are constants $\CC_x, \MM_x > 0$ such that, for all $n \geq 0$,
\eqntag{\label{190303174313}
	\big| f_n (x) \big| \leq \CC_x \MM_x^n n!
\fullstop
}
Furthermore, $\hat{f}$ is a \dfn{uniformly Gevrey series} on $U$ if the constants $\CC_x, \MM_x$ can be chosen to be independent of $x \in U$.
$\hat{f}$ is a \dfn{locally uniformly Gevrey series} on $U$ if every $x_0 \in U$ has a neighbourhood $U_0 \subset U$ where $\hat{f}$ is uniformly Gevrey.
Such power series form subrings $\cal{G}_\rm{u} (U) \bbrac{\hbar}$ and $\cal{G} (U) \bbrac{\hbar}$ of $\cal{O} (U) \bbrac{\hbar}$ respectively.
The ring $\cal{G} (U) \bbrac{\hbar}$ is preserved by $\del_x$.

\paragraph{Uniform Gevrey asymptotics.}
\label{210225135947}
Again, consider the product space $\UUU = U \times S$ where $S$ is a sectorial domain with opening $A$, or more generally let $\UUU$ be a domain of the form described in \autoref{210226161539}.
A holomorphic function $f \in \cal{O} (\UUU)$ is said to admit (pointwise) \dfn{Gevrey asymptotics on $U$ as $\hbar \to 0$ along $A$} if for every $x \in U$ and every compactly contained subarc $A_0 \Subset A$, there is a sectorial domain $S_0$ with opening $A_0$ and constants $\CC_{x,0}, \MM_{x,0} > 0$ such that for all $n \geq 0$ and all $\hbar \in S_0$,
\eqntag{\label{200624162243}
	\big| \RR_n (x, \hbar) \big|
		= \left| f(x, \hbar) - \sum_{k=0}^{n-1} f_k (x) \hbar^k \right|
		\leq \CC_{x,0} \MM^n_{x,0} n! |\hbar|^n
\fullstop
}
We say that $f$ admits \dfn{uniform Gevrey asymptotics} on $U$ as $\hbar \to 0$ along $A$ if the constants $\CC_{x,0}, \MM_{x,0}$ can be chosen to be independent of $x \in U$ (i.e., so that $\CC_{x,0} = \CC_0, \MM_{x,0} = \MM_0$).
We also say $f$ admits \dfn{locally uniform Gevrey asymptotics} on $U$ if every point $x_0 \in U$ has a neighbourhood $U_0 \subset U$ on which $f$ has uniform Gevrey asymptotics.
In these cases, we write, respectively,
\eqnstag{\label{210225142615}
	f (x,\hbar) &\simeq \hat{f} (x,\hbar)
\qqqquad
	\text{as $\hbar \to 0$ along $A$, unif. $\forall x \in U$\fullstop{;}}
\\
\label{210225142617}
	f (x,\hbar) &\simeq \hat{f} (x,\hbar)
\qqqquad
	\text{as $\hbar \to 0$ along $A$, loc.unif. $\forall x \in U$\fullstop}
}
Such functions form subrings $\cal{G}_\rm{u} (U; A) \subset \cal{A}_\rm{u} (U; A)$ and $\cal{G} (U; A) \subset \cal{A} (U; A)$ respectively.
The asymptotic expansion map $\ae$ restricts to ring homomorphisms $\cal{G}_\rm{u} (U;A) \to \cal{G}_\rm{u} (U) \bbrac{\hbar}$ and $\cal{G} (U;A) \to \cal{G} (U) \bbrac{\hbar}$.
As in the case of uniform Poincaré asymptotics, an application of the Cauchy integral formula shows that the ring $\cal{G} (U; A)$ (but not the ring $\cal{G}_\rm{u} (U; A)$) is preserved by differentiation: it has the property $\del_x \big( \cal{G} (U; A) \big) \subset \cal{G} (U; A)$.

\paragraph{Uniform Gevrey asymptotics along a closed arc.}
\label{210225142900}
If in addition to \eqref{210225142615} or \eqref{210225142617}, the constants $\CC_{x,0}, \MM_{x,0}$ can be chosen uniformly for all $A_0 \Subset A$ (i.e., so that $\CC_{x,0} = \CC_x$ and $\MM_{x,0} = \MM_x$), we will write, respectively,
\eqnstag{\label{210226091301}
	f (x,\hbar) &\simeq \hat{f} (x,\hbar)
\qqqquad
	\text{as $\hbar \to 0$ along $\bar{A}$, unif. $\forall x \in U$\fullstop{;}}
\\
\label{210226091307}
	f (x,\hbar) &\simeq \hat{f} (x,\hbar)
\qqqquad
	\text{as $\hbar \to 0$ along $\bar{A}$, loc.unif. $\forall x \in U$\fullstop}
}
Such functions form subrings $\cal{G}_\rm{u} (U; \bar{A}) \subset \cal{G}_\rm{u} (U; \bar{A})$ and $\cal{G} (U; \bar{A}) \subset \cal{G} (U; A)$ respectively.
Again, we have $\del_x \big( \cal{G} (U; \bar{A}) \big) \subset \cal{G} (U; \bar{A})$.

\paragraph{Example.}
\label{200708191952}
The following example is related to what is sometimes called the \textit{Euler series} \cite[Example 1.1.4]{MR3495546}.
Consider the following formal series on $U = \Complex_x^\ast$:
\eqntag{\label{200722153024}
	\hat{\EE} (x, \hbar) 
		\coleq - \sum_{k=1}^\infty (-x)^{-k} (k-1)! \hbar^k
		= \frac{\hbar}{x} \sum_{k=0}^\infty \frac{k!}{(-x)^k} \hbar^k
	~\in~ \cal{O} (U) \bbrac{\hbar}
\fullstop
}
Incidentally, $\hat{\EE}$ is a formal solution of the differential equation $\hbar^2 \del_\hbar \EE + x \EE = \hbar$, but this fact is not important for the discussion in this example.

It is easy to see that the power series $\hat{\EE}$ has zero radius of convergence for any fixed nonzero $x$, but it is a Gevrey series for which the bounds \eqref{190303174313} can be satisfied by taking $\CC_x = 1$ and $\MM_x = |x|^{-k}$.
This demonstrates that $\hat{\EE}$ is a locally uniform Gevrey series on $\Complex_x^\ast$.
One can show that $\hat{\EE}$ is not a uniform Gevrey series.
Thus, $\hat{\EE} \in \cal{G} (U) \bbrac{\hbar}$ but $\not\in \cal{G}_\rm{u} (U) \bbrac{\hbar}$.

Consider the function
\eqn{
	\EE (x, \hbar)
		\coleq \int_0^{+\infty} \frac{e^{- \xi/\hbar}}{x + \xi} \dd{\xi}
\fullstop
}
It is well-defined and holomorphic for all $x$ in the cut plane $U' \coleq \Complex_x \setminus \Real_-$ and all $\hbar$ with ${\Re (\hbar) > 0}$.
It is not holomorphic at $\hbar = 0$ for any $x \in U$, but it is bounded as $\hbar \to 0$ in the right halfplane.
In fact, $\EE$ admits the power series $\hat{\EE}$ as its locally uniform Gevrey asymptotics:
\eqntag{\label{210225150455}
	\EE (x, \hbar) \simeq \hat{\EE} (x, \hbar)
\qquad
	\text{as $\hbar \to 0$ along $[-\nicefrac{\pi}{2}, +\nicefrac{\pi}{2}]$, loc.unif. $\forall x \in U'$\fullstop}
}
In symbols, $\EE \in \cal{G} (U'; A)$.
To see this, we can write:
\eqn{
	\frac{1}{x + \xi}
	= \frac{1}{x} \frac{1}{1 + \xi/x}
	= \frac{1}{x} \sum_{k=0}^{n-2} 
		\frac{1}{(-x)^k} \xi^k
		+ \frac{1}{(-x)^{n-1}} \frac{\xi^{n-1}}{x + \xi}
\fullstop
}
Therefore, we obtain the relation
\eqn{
	\EE (x, \hbar) 
		= \frac{\hbar}{x} \sum_{k=0}^{n-2} \frac{k!}{(-x)^k} \hbar^k
			+ \frac{1}{(-x)^{n-1}} \int_0^{+\infty} 
				\frac{\xi^{n-1} e^{- \xi / \hbar}}{x + \xi} \dd{\xi}
\fullstop
}
So to demonstrate \eqref{210225150455}, one can find a locally uniform bound on the integral which is valid uniformly for all directions in the halfplane arc $(-\pi/2, +\pi/2)$.

\section{Basics of Exponential Asymptotics}
\setcounter{section}{2}
\setcounter{paragraph}{0}
\label{210220170857}

In this appendix section, we define the appropriate notion of parametric asymptotics necessary for the analysis of linear ODEs of the form \eqref{210115121038}.
As ever, for the reader's convenience, we begin with a description in one complex variable.

\subsection{Exponential Asymptotics in One Dimension}

\begin{defn}{210218205742}
An \dfn{exponential power series} in $\hbar$ is a formal expression of the form
\eqn{
	\hat{\psi} = e^{\Phi / \hbar} \, \hat{\Psi}
}
where $\Phi = \Phi (\hbar^{-1}) \in \Complex [\hbar^{-1}]$ and $\hat{\Psi} = \hat{\Psi} (\hbar) \in \Complex \bbrac{\hbar}$.
Moreover, $\hat{\psi}$ is an \dfn{exponential Gevrey series} if $\hat{\Psi} \in \Gevrey \bbrac{\hbar}$.
The polynomial $\Phi$ is called the \dfn{exponent} of $\hat{\psi}$.
\end{defn}

Notice that the exponential prefactor $e^{\Phi / \hbar}$ is just a holomorphic function of $\hbar \in \Complex^\ast$ that has an essential singularity at $\hbar = 0$.
The usual formal power series $\Complex \bbrac{\hbar}$ are the exponential power series with $\Phi = 0$.

\paragraph{Exponential transseries.}
The set of all exponential power series in $\hbar$ with a fixed exponent $\Phi$ has the structure of a $\Complex$-vector space, but unless $\Phi$ is the zero polynomial it does not inherit the usual ring structure because it is not closed under products.
At the same time, the set of all exponential power series in $\hbar$ is closed under multiplication but it loses the structure of a $\Complex$-vector space because sums of exponential power series with distinct exponents cannot be written as exponential power series.
We therefore want to consider finite linear combinations of exponential power series to yield a well-behaved algebraic structure.
So we introduce the following definition.

\begin{defn}{210226094325}
We define the ring of \dfn{exponential transseries} in $\hbar$ as the set of all formal finite combinations of exponential power series:
\eqn{
	\Complex^{\exp} \bbrac{\hbar}
	\coleq \set{ \left. ~\sum_k^{\textup{\tiny{finite}}} e^{\Phi_k / \hbar} 
				\, \hat{\Psi}_k
		 ~~\right|~ \Phi_k \in \Complex [\hbar^{-1}], 
		 		\, \hat{\Psi}_k \in \Complex \bbrac{\hbar}}
\fullstop
}
Similarly, an \dfn{exponential Gevrey transseries} in $\hbar$ is one such that each formal power series $\hat{\Psi}_k$ is a Gevrey series.
The ring of exponential Gevrey transseries will be denoted by $\Gevrey^{\exp} \bbrac{\hbar}$.
\end{defn}

\begin{rem}{210616171345}
Our ring $\Complex^{\exp} \bbrac{\hbar}$ is a much simpler version of analogous formal objects first systematically considered in asymptotic analysis by Ilyashenko in \cite{zbMATH00052132} (see \textit{Dulac's exponential series}), and at the same time independently by Écalle in \cite{zbMATH06048675} (whence the terminology ``transseries'' is borrowed).
(See also \cite{MR1209700}.)
Since then, transseries have grown into a vast subject which we do not need here.
Quite approachable introductions may be found in \cite{zbMATH02202755,zbMATH05904821,zbMATH07130996}.
\end{rem}

\begin{defn}{210616171635}
A holomorphic function $\psi \in \cal{O} (A)$ admits \dfn{exponential asymptotics as $\hbar \to 0$ along $A$ (resp.\ $\bar{A}$)} if there exists a polynomial $\Phi \in \Complex [\hbar^{-1}]$ such that the holomorphic function $e^{-\Phi / \hbar} \psi$ admits asymptotics as $\hbar \to 0$ along $A$ (resp.\ $\bar{A}$) in the usual sense of \eqref{200720175735} (resp.\ \eqref{210220160756}).
In symbols, $e^{-\Phi / \hbar} \psi \in \cal{A} (A)$ (resp.\ $\in \cal{A} (\bar{A})$).
In this case, we call $\Phi$ the \dfn{asymptotic exponent} of $\psi$.
If $\hat{\Psi} \in \Complex \bbrac{\hbar}$ is the asymptotic expansion of $e^{- \Phi / \hbar} \psi$ in the usual sense, then we say that $\hat{\psi} = e^{\Phi / \hbar} \, \hat{\Psi}$ is the \dfn{exponential asymptotic expansion} of $\psi$, and write:
\eqntag{\label{210225202543}
	\psi \sim \hat{\psi} = e^{\Phi / \hbar} \, \hat{\Psi}
\qqqquad
	\text{as $\hbar \to 0$ along $A$ (resp.\ $\bar{A}$)\fullstop}
}
Similarly, we will say that $\psi \in \cal{O} (A)$ admits \dfn{exponential Gevrey asymptotics as $\hbar \to 0$ along $A$ (resp.\ $\bar{A}$)} if $e^{- \Phi / \hbar} \psi$ admits a Gevrey asymptotic expansion $\hat{\Psi} \in \Gevrey \bbrac{\hbar}$ as $\hbar \to 0$ along $A$ (resp.\ $\bar{A}$) in the usual sense of \eqref{210225131044} (resp.\ \eqref{210225134416}).
In symbols, $e^{- \Phi / \hbar} \psi \in \cal{G} (A)$ (resp.\ $\in \cal{G} (\bar{A})$).
 and we continue to write
\eqntag{\label{210225203332}
	\psi \simeq \hat{\psi} = e^{\Phi / \hbar} \, \hat{\Psi}
\qqqquad
	\text{as $\hbar \to 0$ along $A$ (resp.\ $\bar{A}$)\fullstop}
}
\end{defn}

\paragraph{}
\label{210617074224}
We denote by $\cal{A}^{\exp} (A)$ and $\cal{G}^{\exp} (A)$ (and similarly for $\bar{A}$) the rings of all holomorphic functions $\psi \in \cal{O} (A)$ which admit exponential Poincaré and Gevrey asymptotics in the sense of \eqref{210225202543} and \eqref{210225203332}, respectively.
The usual asymptotic expansion map $\ae : \cal{A} (A) \to \Complex \bbrac{\hbar}$ extends to a map $\ae : \cal{A}^{\exp} (A) \to \Complex^{\exp} \bbrac{\hbar}$.
However, it is no longer a ring homomorphism because of the complicated dominance relations brought about by the exponential prefactors.
It is possible to formalise this phenomenon (as is indeed done in the more general subject of transseries \cite{zbMATH02202755}), but we do not need this here.

For a simple example, the holomorphic function $e^{1/\hbar}$ does not admit an asymptotic expansion as $\hbar \to 0$ along any subarc of $(-\nicefrac{\pi}{2},+\nicefrac{\pi}{2})$, but it does admit an exponential Gevrey asymptotic expansion with exponent $\Phi = 1$ and $\hat{\Psi} = 1$.
In symbols, $e^{1/\hbar} \in \cal{G}^{\exp} (A)$ but $\not\in \cal{A} (A)$.

\subsection{Uniform Exponential Asymptotics}

Now we suppose that in addition we have a domain $U \subset \Complex_x$.
We make all the definitions from the previous subsection uniform in the variable $x$.

\begin{defn}{210226085039}
An \dfn{exponential power series} in $\hbar$ with holomorphic coefficients on $U$ is a formal expression of the form
\eqn{
	\hat{\psi} = e^{\Phi / \hbar} \, \hat{\Psi}
}
where $\Phi = \Phi (x, \hbar^{-1}) \in \cal{O} (U) [\hbar^{-1}]$ and $\hat{\Psi} = \hat{\Psi} (x, \hbar) \in \cal{O} (U) \bbrac{\hbar}$.
The polynomial $\Phi$ is called the \dfn{exponent} of $\hat{\psi}$.
We will say that $\hat{\psi}$ is a \dfn{uniform} or \dfn{locally uniform exponential Gevrey series} if $\hat{\Psi} \in \cal{G}_\rm{u} (U) \bbrac{\hbar}$ or $\hat{\Psi} \in \cal{G} (U) \bbrac{\hbar}$, respectively.
\end{defn}

Like in the single-variable case, the exponential prefactor $e^{\Phi / \hbar}$ is a holomorphic function of $(x, \hbar) \in U \times \Complex^\ast$.
The differentiation $\del_x \hat{\psi}$ is defined in the obvious way using the Leibniz rule.
Notice that the set of all exponential power series on $U$ with a fixed exponent $\Phi$ is not preserved by the action of the derivative $\del_x$ unless $\Phi$ is constant in $x$.
But it is preserved by the operator $\hbar^{m+1} \del_x$ where $m \geq 0$ is the degree of $\Phi$ in $\hbar^{-1}$.

\begin{defn}{210226114421}
We define the ring of \dfn{exponential transseries} with holomorphic coefficients on $U$ as the set of all finite combinations of exponential power series with holomorphic coefficients:
\eqn{
	\cal{O}^{\exp} (U) \bbrac{\hbar}
	\coleq \set{ \left. ~\sum_k^{\textup{\tiny{finite}}} e^{\Phi_k / \hbar} 
				\, \hat{\Psi}_k
		 ~~\right|~ \Phi_k \in \cal{O} (U)  [\hbar^{-1}], 
		 		\, \hat{\Psi}_k \in \cal{O} (U)  \bbrac{\hbar}}
\fullstop
}
Similarly, we define subrings of uniform and locally uniform \dfn{exponential Gevrey transseries} on $U$ by requiring that each $\hat{\Psi}_k$ is in $\cal{G}_\rm{u} (U) \bbrac{\hbar}$ or $\cal{G} (U) \bbrac{\hbar}$.
These subrings are denoted by $\cal{G}_\rm{u}^{\exp} (U) \bbrac{\hbar}$ or $\cal{G}^{\exp} (U) \bbrac{\hbar}$, respectively.
\end{defn}

\begin{defn}{210226090851}
A holomorphic function $\psi \in \cal{O} (U; A)$ admits \dfn{uniform} or \dfn{locally uniform exponential asymptotics as $\hbar \to 0$ along $A$ (resp.\ $\bar{A}$)} if there exists $\Phi = \Phi (x, \hbar^{-1}) \in \cal{O} (U) [\hbar^{-1}]$ such that the holomorphic function $e^{-\Phi / \hbar} \psi \in \cal{O} (U; A)$ admits respectively uniform or locally uniform asymptotics as $\hbar \to 0$ along $A$ (resp.\ $\bar{A}$) in the usual sense of \eqref{210225121434}-\eqref{210225121453} (resp.\ \eqref{210225123426}-\eqref{210225123428}).
In this case, we call $\Phi$ the \dfn{asymptotic exponent} of $\psi$, and if $\hat{\Psi} \in \cal{O} (U) \bbrac{\hbar}$ is the asymptotic expansion of $e^{- \Phi / \hbar} \psi$ in the usual sense, then we say that $\hat{\psi} = e^{\Phi / \hbar} \, \hat{\Psi}$ is the \dfn{exponential asymptotic expansion} of $\psi$.
We denote these facts respectively by
\eqnstag{\label{210226092415}
	\psi &\sim \hat{\psi} = e^{\Phi / \hbar} \, \hat{\Psi}
\qqqquad
	\text{as $\hbar \to 0$ along $A$ (resp.\ $\bar{A}$), unif. $\forall x \in U$\fullstop{;}}
\\\label{210226092421}
	\psi &\sim \hat{\psi} = e^{\Phi / \hbar} \, \hat{\Psi}
\qqqquad
	\text{as $\hbar \to 0$ along $A$ (resp.\ $\bar{A}$), loc.unif. $\forall x \in U$\fullstop}
}
Similarly, we say that $\psi \in \cal{O} (U; A)$ admits \dfn{uniform} or \dfn{locally uniform exponential Gevrey asymptotics as $\hbar \to 0$ along $A$ (resp.\ $\bar{A}$)} if $e^{- \Phi / \hbar} \psi$ admits respectively uniform or locally uniform Gevrey asymptotics as $\hbar \to 0$ along $A$ (resp.\ $\bar{A}$) in the usual sense of \eqref{210225142615}-\eqref{210225142617} (resp.\ \eqref{210226091301}-\eqref{210226091307}).
We denote these facts respectively by
\eqnstag{\label{210226092412}
	\psi &\simeq \hat{\psi} = e^{\Phi / \hbar} \, \hat{\Psi}
\qqqquad
	\text{as $\hbar \to 0$ along $A$ (resp.\ $\bar{A}$), unif. $\forall x \in U$\fullstop{;}}
\\\label{210226092501}
	\psi &\simeq \hat{\psi} = e^{\Phi / \hbar} \, \hat{\Psi}
\qqqquad
	\text{as $\hbar \to 0$ along $A$ (resp.\ $\bar{A}$), loc.unif. $\forall x \in U$\fullstop}
}
\end{defn}

We denote the corresponding rings of holomorphic functions satisfying these definitions by $\cal{A}^{\exp} (U; A)$, $\cal{G}^{\exp} (U; A)$, and so on; i.e., by adding a superscript ``$\exp$'' in the notation for the usual asymptotics.

\section{Basics of the Borel-Laplace Theory}
\label{210616130753}

In this appendix section, we recall some basic definitions from the theory of Borel-Laplace transforms.

\paragraph{}
Let $U \subset \Complex_x$ be a domain.
Fix a direction $\theta \in \Sphere^1$, let $A_\theta$ be the halfplane arc bisected by $\theta$, and let $S_\theta$ be the Borel disc bisected by $\theta$ of some diameter $\delta > 0$:
\eqntag{\label{210616181227}
	A_\theta \coleq (\theta -\tfrac{\pi}{2}, \theta + \tfrac{\pi}{2})
\qtext{and}
	S_\theta \coleq \set{ \Re \big( \smash{e^{i\theta}} / \hbar \big) > 1/\delta}
\fullstop
}
Introduce another complex plane $\Complex_\xi$, sometimes called the \dfn{Borel plane}.
In the same vein, the complex plane $\Complex_\hbar$ is sometimes called the \dfn{Laplace plane}.
Let $e^{i\theta} \Real_+ \subset \Complex_\xi$ be the nonnegative real ray in the direction $\theta$.
By a \dfn{tubular neighbourhood} of $e^{i\theta} \Real_+$ of some \textit{thickness} $\epsilon > 0$ we mean a domain of the form 
\eqntag{
	\Xi_\theta \coleq \set{ \xi \in \Complex_\xi ~\big|~ \op{dist} (\xi, e^{i\theta}\Real_+) < \epsilon}
\fullstop
}

\subsection{The Laplace Transform}

\paragraph{}
Let us first recall some well-known properties of the Laplace transform.
Suppose $\Xi_\theta \subset \Complex_\xi$ is a tubular neighbourhood of $e^{i \theta} \Real_+$, and $\phi = \phi (x, \xi)$ is a holomorphic function on $U \times \Xi_\theta$.
Its \dfn{Laplace transform} in the direction $\theta$ is defined by the formula:
\eqntag{\label{200624181217}
	\Laplace_\theta [\, \phi \,] (x, \hbar)
		\coleq \int\nolimits_{e^{i\theta} \Real_+} \phi (x, \xi) e^{-\xi/\hbar} \dd{\xi}
\fullstop
}
The function $\phi$ is called \dfn{uniformly} or \dfn{locally uniformly Laplace transformable} in the direction $\theta$ if this integral is respectively uniformly or locally uniformly convergent.
Clearly, $\phi$ is uniformly Laplace transformable in the direction $\theta$ if $\phi$ has uniform \dfn{at-most-exponential growth} as $|\xi| \to + \infty$ along the ray $e^{i\theta} \Real_+$.
Explicitly, this means there are constants $\AA, \LL > 0$ such that for all $(x,\xi) \in U \times \Xi_\theta$,
\eqntag{
	\big| \phi (x, \xi) \big| \leq \AA e^{\LL |\xi|}
\fullstop
}

\paragraph{Properties of the Laplace transform.}
Recall that the Laplace transform converts (a) the convolution product of functions into the pointwise multiplication of their Laplace transforms, and (b) differentiation by $\xi$ into multiplication by $\hbar^{-1}$.
Thus, if $\phi, \sigma$ are two uniformly Laplace transformable holomorphic functions on $U \times \Xi_\theta$, then:
\eqn{
	\Laplace_\theta [\, \phi \ast \sigma \,] (x, \hbar)
		= \Laplace_\theta [\, \phi \, ] (x, \hbar) \cdot \Laplace_\theta [\, \sigma \, ] (x, \hbar)
\qtext{and}
	\Laplace_\theta [\, \del_\xi \phi \, ] (x, \hbar)
		= \hbar^{-1} \Laplace_\theta [\, \phi \,] (x, \hbar)
\fullstop
}
Let us also note that the convolution product is taken with respect to the variable $\hbar$ and recall that it is defined by the following formula:
\eqntag{
	\phi \ast \sigma (x, \xi)
	\coleq 
	\int\nolimits_0^\xi \phi (x, \xi - y) \sigma (x, y) \dd{y}
\fullstop{,}
}
where the path of integration is a straight line segment from $0$ to $\xi$.
Finally, if $\phi$ is a uniformly Laplace transformable holomorphic function on $U \times \Xi_\theta$, then for any $x_0 \in U$, and all $(x, \xi) \in U \times \Xi_\theta$,
\eqntag{
	\Laplace_\theta \left[ \, \int\nolimits_{x_0}^x \phi (t, \xi) \dd{t} \, \right]
		= \int\nolimits_{x_0}^x \Laplace_\theta \big[ \, \phi \, \big] (t, \xi) \dd{t}
\fullstop{,}
}
where the path of integration is assumed to lie entirely in $U$.

\subsection{The Borel Transform}

\paragraph{}
Let $f = f(x, \hbar)$ be a holomorphic function on a product domain $U \times S_\theta$, or more generally a holomorphic function on the pair $(U, A_\theta)$ as described in \autoref{210226161539}.
In the latter situation, given $x \in U$, take a sufficiently small Borel disc $S_\theta$ such that $U_0 \times S_\theta$ is contained in $\UUU$ for some neighbourhood $U_0$ of $x$.

The \dfn{analytic Borel transform} (a.k.a., the \dfn{inverse Laplace transform}) of $f$ in the direction $\theta$ is defined by the following formula:
\eqntag{\label{210617101748}
	\Borel_\theta [\, f \,] (x, \xi)
		\coleq \frac{1}{2\pi i} \oint\nolimits_\theta f(x, \hbar) e^{\xi / \hbar} \frac{\dd{\hbar}}{\hbar^2}
\fullstop
}
Here, the notation ``$\oint_\theta$'' means that the integration is done along the boundary $\wp_\theta \coleq \set{ \Re (e^{i\theta}/\hbar) = 1 / \delta'}$ of a Borel disc $S'_\theta \subsetneq S_\theta$ of strictly smaller diameter $\delta' < \delta$, traversed anticlockwise (i.e., emanating from the singular point $\hbar = 0$ in the direction $\theta - \pi/2$ and reentering in the direction $\theta + \pi/2$).
Observe that the integral kernel $e^{\xi / \hbar} \hbar^{-2}$ has an essential singularity at $\hbar = 0$ for every nonzero $\xi$.
We therefore interpret this improper integral as the Cauchy principal value, which means it is defined as the limit of an integral over a sequence of path segments on the boundary $\del S'_\theta$ approaching the singular point $\hbar = 0$ in both directions at the same rate.
Explicitly, parameterise the boundary path $\wp_\theta$ by $t \in \Real$ as $\hbar (t) = e^{i \theta} ( \delta' + it )^{-1}$.
Then for every $\TT > 0$, we take the path segment $\wp_\theta (\TT)$ for $t \in [-\TT,+\TT]$ and define
\eqntag{\label{210617080526}
	\oint\nolimits_\theta \coleq \lim_{\TT \to +\infty} \int\nolimits_{\wp_\theta (\TT)}
}
Finally, since the integrand $f(x, \hbar) e^{\xi / \hbar} \hbar^{-2}$ is holomorphic in the interior of the Borel disc $S_\theta$, it is not difficult to see that the integral in \eqref{210617101748} is independent of $\delta'$, provided that it exists.
If the integral \eqref{210617101748} converges for all $\xi \in e^{i\theta} \Real_+$ and uniformly (resp. locally uniformly) for all $x \in U$, then we say $f$ is \dfn{uniformly} (resp. \dfn{locally uniformly}) \dfn{Borel transformable} in the direction $\theta$.
The following lemma gives a criterion for Borel transformability.

\begin{lem}{210617101734}
If $f \in \cal{O} (U; A_\theta)$ admits uniform (resp. locally uniform) Gevrey asymptotics as $\hbar \to 0$ along the closed arc $\bar{A}_\theta$ (in symbols, $f \in \cal{G} (U; \bar{A}_\theta)$), then it is uniformly (resp. locally uniformly) Borel transformable in the direction $\theta$.
\end{lem}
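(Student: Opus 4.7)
The plan is to exploit the quantitative Gevrey estimates on $f$ to reduce the convergence question to an absolutely integrable tail plus two monomial-in-$\hbar$ contributions that can be evaluated by explicit contour calculus. First I would parameterise the contour $\wp_\theta$ by $t \in \Real$ via $\hbar(t) = e^{i\theta}(\delta' + it)^{-1}$ and observe that $d\hbar/\hbar^2 = -ie^{-i\theta}\,dt$, while for $\xi = e^{i\theta}\rho$ with $\rho > 0$ the factor $|e^{\xi/\hbar(t)}| = e^{\rho\delta'}$ is constant along the contour. Consequently the Borel integrand is essentially $|f(x,\hbar(t))|\,dt$, and since $f(x,\hbar) \to f_0(x)$ as $\hbar \to 0$, absolute convergence fails outright --- the whole point of the Cauchy principal-value definition in \eqref{210617080526} is precisely to allow the required cancellations.

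Next I would use the Gevrey hypothesis to subtract off the first two terms of the asymptotic expansion: by $f \in \cal{G}(U;\bar{A}_\theta)$ there exist constants $C,M > 0$ (uniform, resp.\ locally uniform in $x$) and a sectorial subdomain containing $\wp_\theta$ for $\delta'$ small enough such that
\[
|R_2(x,\hbar)| = \big|f(x,\hbar) - f_0(x) - f_1(x)\hbar\big| \leq 2CM^2 |\hbar|^2 .
\]
On $\wp_\theta$ the integrand associated to $R_2$ is then dominated by $2CM^2\,e^{\rho\delta'}(\delta'^2 + t^2)^{-1}\,dt$, giving the absolutely convergent bound
\[
\left|\frac{1}{2\pi i}\oint_\theta R_2(x,\hbar)\,e^{\xi/\hbar}\,\frac{d\hbar}{\hbar^2}\right| \leq \frac{\pi CM^2\,e^{\rho\delta'}}{\delta'}
\]
uniformly (resp.\ locally uniformly) in $x$. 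The Gevrey hypothesis is precisely what makes this work: the factorial remainder control for $n=2$ supplies the crucial factor of $|\hbar|^2$ needed to beat the $|\hbar|^{-2}$ in the Borel kernel and defeat the boundedness-from-below of the Borel integrand along the tails.

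Finally, the two subtracted monomials $f_0(x)\oint_\theta e^{\xi/\hbar}\,d\hbar/\hbar^2$ and $f_1(x)\oint_\theta \hbar\,e^{\xi/\hbar}\,d\hbar/\hbar^2$ can be disposed of by the substitution $w = 1/\hbar$, which transforms $\wp_\theta$ into the vertical line $\Re(e^{i\theta}w) = \delta'$ in the Borel plane and reduces them to classical line integrals of $e^{\xi w}$ and $e^{\xi w}/w$ respectively. For $\rho > 0$, closing to the left via Jordan's lemma (where $\Re(\xi w) \to -\infty$), the residue theorem produces the standard values $\xi^{k-1}/(k-1)!$ for $k = 1$ and $0$ (as a symmetric principal value) for $k = 0$. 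The main obstacle, and indeed the only delicate point in the whole argument, is verifying the vanishing of the constant-term integral as a symmetric truncation limit --- this is exactly the subtlety that the convention \eqref{210617080526} is designed to accommodate, and once it is granted, combining the two steps yields the uniform (resp.\ locally uniform) convergence of $\Borel_\theta[f](x,\xi)$ for all $\xi \in e^{i\theta}\Real_+$.
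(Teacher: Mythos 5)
Your core mechanism is the right one: write $f = f_0 + f_1\hbar + R_2$, observe that the Gevrey estimate for the order-$2$ remainder gives $|R_2(x,\hbar)|\le 2CM^2|\hbar|^2$, and note that since $|\hbar(t)|^2 = (\delta'^2+t^2)^{-1}$ and $|e^{\xi/\hbar(t)}|=e^{\rho\delta'}$ is constant along the contour, the $R_2$-integral is dominated by $\int_{\Real}(\delta'^2+t^2)^{-1}\,dt<\infty$. This is exactly the way the $n=2$ Gevrey bound is used to defeat the $\hbar^{-2}$ kernel; the uniform (resp.\ locally uniform) dependence on $x$ is carried entirely by $C,M$. (There is a small numerical slip: after including the $\frac{1}{2\pi}$ prefactor the bound is $CM^2 e^{\rho\delta'}/\delta'$, not $\pi CM^2 e^{\rho\delta'}/\delta'$, but that is cosmetic.) The paper itself does not print a proof --- it refers to \cite[Lemma B.5 and Theorem B.11]{MY2008.06492} --- so a line-by-line comparison is not possible, but your estimate for $R_2$ is certainly the heart of the ``simple complex analytic argument using the Gevrey bounds'' the paper alludes to, and your contour/Jordan computation of $\Borel_\theta[\hbar]=1$ (a conditionally convergent Bromwich integral) is also sound.

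There is, however, a genuine gap, and it is precisely at the spot you flag as ``the only delicate point.'' With the principal-value convention \eqref{210617080526} taken literally, the constant-term integral does \emph{not} converge: for $\theta=0$ and $\xi=\rho>0$, parameterising $\hbar(t)=(\delta'+it)^{-1}$ gives $d\hbar/\hbar^2=-i\,dt$ and $e^{\xi/\hbar(t)}=e^{\rho\delta'}e^{i\rho t}$, so the truncated integral is
\[
\frac{1}{2\pi i}\int_{-T}^{T}e^{\rho\delta'}e^{i\rho t}(-i)\,dt
\;=\; -\,\frac{e^{\rho\delta'}\sin(\rho T)}{\pi\rho},
\]
which oscillates without any limit as $T\to\infty$. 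Thus the symmetric truncation does \emph{not} ``accommodate'' the constant term; it simply fails to converge, and ``once it is granted'' is conceding a step that cannot be granted as stated. A correct treatment must either (a) interpret $\Borel_\theta[1]=0$ as a convention (so that Borel transformability means convergence of the PV after subtracting the constant $f_0$, which is what your $R_2$-estimate together with the conditional convergence of $\Borel_\theta[\hbar]$ actually delivers), or (b) follow the Nevanlinna route: construct $\phi=\mathrm{AnCont}_\theta\bigl[\hat{\mathcal B}[\hat f]\bigr]$ directly from the formal Borel transform and verify $f=f_0+\mathcal L_\theta[\phi]$, thereby never computing the PV of a constant at all. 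Either way, the final step of your argument needs to be reformulated rather than merely ``granted.''
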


A proof of this proposition (which is a simple complex analytic argument using the Gevrey bounds \eqref{210225134829}) can be found in \cite[Lemma B.5 and Theorem B.11]{MY2008.06492}.
We stress the importance in this lemma of having Gevrey asymptotics along the \textit{closed} arc $\bar{A}_\theta$ and not just $A_\theta$.
For example, recall from \autoref{210617074256} that the function $f(\hbar) = e^{-1/\hbar}$ is in $\cal{G} (A)$, where $A = (-\pi/2, +\pi/2)$, but not in $\cal{G} (\bar{A})$, and we can see that its Borel transform in the direction $\theta = 0$ is not well-defined when $\xi = 1$.

\paragraph{}
If $f$ is holomorphic at $\hbar = 0$ (i.e., if $f \in \cal{O} (U) \set{\hbar}$), then it is necessarily Borel transformable in every direction, and all these Borel transforms agree and define a holomorphic function $\Borel [\, f \,] (x, \xi)$ of $(x, \xi) \in U \times \Complex_\xi$.
In fact, in this case the integration contour in \eqref{210617101748} can be deformed to a circle around the origin, so that the Borel transform of $f$ for any $\theta$ is nothing but the residue integral:
\eqntag{\label{210617101920}
	\Borel [\, f \,] (x, \xi)
		= \underset{\hbar = 0}{\Res}\: \frac{f(x, \hbar) e^{\xi / \hbar}}{\hbar^2}
\fullstop
}

\paragraph{}
Using the residue calculus expression \eqref{210617101920}, it is easy to deduce the following helpful formulas:
\eqntag{\label{200704112520}
	\Borel [\, 1 \,] = 0
\qqqtext{and}
	\Borel [\, \hbar^{k+1} \,] = \frac{\xi^k}{k!}
\qquad \text{for all $k \geq 0$.}
}
They can be used to extend the Borel transform to formal power series in $\hbar$ by defining the \dfn{formal Borel transform}:
for any $\hat{f} (x, \hbar) \in \cal{O} (U) \bbrac{\hbar}$,
\eqntag{
\mbox{}\hspace{-10pt}
	\hat{\phi} (x, \xi) =
	\hat{\Borel} [ \, \hat{f} \, ] (x, \xi)
		\coleq \sum_{n=0}^\infty \phi_n (x) \xi^n
		\in \cal{O} (U) \bbrac{\xi}
\qtext{where}
	\phi_k (x) \coleq \tfrac{1}{k!} f_{k+1} (x)
\GREY{.}
}
Thus, the formal Borel transform essentially `divides' the coefficients of the power series by $n!$.
The following lemma follows immediately from this formula and the Gevrey power series bounds \eqref{190303174313}.

\begin{lem}{210617102534}
If $\hat{f}$ is a uniformly or locally-uniformly Gevrey series on $U$ (in symbols, $\hat{f} \in \cal{G}_\rm{u} (U) \bbrac{\hbar}$ or $\hat{f} \in \cal{G} (U) \bbrac{\hbar}$, respectively), then its formal Borel transform $\hat{\phi}$ is respectively a uniformly or locally uniformly convergent series in $\xi$.
In symbols, $\hat{\phi} \in \cal{O}_\rm{u} (U) \set{\xi}$ or $\hat{\phi} \in \cal{O} (U) \set{\xi}$, respectively.
\end{lem}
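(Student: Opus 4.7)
The plan is essentially to combine the defining formula $\phi_k(x) = \tfrac{1}{k!} f_{k+1}(x)$ for the coefficients of the formal Borel transform with the Gevrey bounds on the coefficients of $\hat{f}$ and read off the radius of convergence.

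First I would fix a point $x_0 \in U$ and invoke the Gevrey hypothesis to produce constants $\CC, \MM > 0$ (independent of $x$ in the uniform case, uniform on some neighbourhood $U_0$ of $x_0$ in the locally-uniform case) such that
\begin{equation*}
\big| f_n (x) \big| \leq \CC \MM^n n!
\qquad (\forall n \geq 0,\; \forall x \in U_0).
\end{equation*}
Then for every $k \geq 0$ and every $x \in U_0$,
\begin{equation*}
\big| \phi_k (x) \big|
= \tfrac{1}{k!} \big| f_{k+1} (x) \big|
\leq \tfrac{1}{k!} \CC \MM^{k+1} (k+1)!
= \CC \MM (k+1) \MM^k.
\end{equation*}

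The factor $(k+1)$ only shifts the exponential, so the series $\hat{\phi} (x, \xi) = \sum_k \phi_k (x) \xi^k$ is dominated termwise on $U_0$ by the convergent series $\CC \MM \sum_k (k+1) (\MM |\xi|)^k$, which has radius of convergence $1/\MM$ in $\xi$. By the Weierstrass $M$-test, $\hat{\phi}$ converges uniformly on $U_0 \times \set{|\xi| < r}$ for any $0 < r < 1/\MM$, and by standard facts the limit is holomorphic in both variables. This gives $\hat{\phi} \in \cal{O}_\rm{u} (U) \set{\xi}$ in the uniform case (taking $U_0 = U$), and $\hat{\phi} \in \cal{O} (U) \set{\xi}$ in the locally uniform case (since every $x_0 \in U$ has such a neighbourhood $U_0$).

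There is no real obstacle here; the only mild bookkeeping item is confirming that $\hat{\phi}$ is genuinely a holomorphic function of $(x,\xi)$ rather than merely a power series with holomorphic coefficients, but this is immediate from uniform convergence on products of the form $U_0 \times \set{|\xi| < r}$ together with Morera's theorem applied separately in each variable (or just from the fact that partial sums are clearly holomorphic in $(x,\xi)$).
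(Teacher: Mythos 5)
Your proof is correct and is exactly the argument the paper has in mind: the paper states only that the lemma ``follows immediately from this formula and the Gevrey power series bounds,'' and you have simply spelled out that deduction — substituting the Gevrey bound $|f_{k+1}| \leq \CC\MM^{k+1}(k+1)!$ into $\phi_k = f_{k+1}/k!$, absorbing the linear factor $(k+1)$, and reading off a positive radius of convergence $1/\MM$. No discrepancy with the paper's intended route.
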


\paragraph{Properties of the Borel transform.}
Finally, we mention a few important properties of the Borel transform.
Given two uniformly Borel $\theta$-transformable holomorphic functions $f,g$ on $(U, A_\theta)$, the following identities hold for all $(x, \xi) \in U \times \Xi_\theta$:
\begin{gather}
	\Borel_\theta [ \, fg \, ]
		= \big( \Borel_\theta [ \, f \, ] \ast \Borel_\theta [ \, g \, ] \big)
\GREY{;}
\\
	\Borel_\theta [ \, \hbar^{-1} f \, ] = \del_\xi \Borel_\theta [ \, f \, ]
\qtext{and}
	\Borel_\theta [ \, \del_x f \, ] = \del_x \Borel_\theta [ \, f \, ]
\fullstop{;}
\\
	\Borel_\theta \left[ \, \int\nolimits_{x_0}^x f (t, \hbar) \dd{t} \, \right] (x, \xi)
		= \int\nolimits_{x_0}^x \Borel_\theta \big[ \, f \, \big] (t, \xi) \dd{t}
\fullstop{,}
\end{gather}
for any $x_0 \in U$ where the path of integration is assumed to lie entirely in $U$.

\subsection{Borel Resummation}

One of the most fundamental theorems in Gevrey asymptotics is a theorem of Nevanlinna \cite[pp.44-45]{nevanlinna1918theorie} which was rediscovered and clarified decades later by Sokal \cite{MR558468}; see also \cite[p.182]{zbMATH00797135}, and \cite[Theorem 5.3.9]{MR3495546}.
It identifies a subclass of Gevrey formal power series with a class of sectorial germs admitting Gevrey asymptotics in a halfplane.
This identification is often called \textit{Borel resummation} and it should be thought of in complete analogy with the familiar identification of convergent power series with germs of holomorphic functions.
The exceptions are that the identification for Gevrey series depends on a direction $\theta$ and the operation of converting a formal power serious into a holomorphic function is much more involved.
For clarity, we also provide definitions and statements in the single-variable case.

\begin{defn}{210617122738}
A Gevrey power series $\hat{f} (\hbar) \in \Gevrey \bbrac{\hbar}$ is a \dfn{Borel summable series} in the direction $\theta$ if its formal Borel transform $\hat{\phi} (\xi) \in \Complex \set{\xi}$ admits an analytic continuation $\phi (\xi) \coleq \rm{AnCont}_\theta [\, \hat{\phi} \,] (\xi)$ to a tubular neighbourhood $\Xi_\theta$ of the ray $e^{i\theta} \Real_+$ with at-most-exponential growth as $|\xi| \to + \infty$ in $\Xi_\theta$.
The subring of Borel summable series in the direction $\theta$ will be denoted by $\bar{\Gevrey}_\theta \bbrac{\hbar} \subset \Gevrey \bbrac{\hbar}$.

More generally in the parametric situation, a uniformly Gevrey power series $\hat{f} (x, \hbar) \in \cal{G}_\rm{u} (U) \bbrac{\hbar}$ is called a \dfn{uniformly Borel summable series} in the direction $\theta$ if its (necessarily uniformly convergent) formal Borel transform $\hat{\phi} (x, \xi) = \hat{\Borel}_\theta [\, \hat{f} \,] \in \cal{O}_\rm{u} (U) \set{\xi}$ admits an analytic continuation $	\phi (x, \xi) \coleq \rm{AnCont}_\theta [\, \hat{\phi} \,] (x, \xi)$ to a domain $U \times \Xi_\theta$ for some tubular neighbourhood $\Xi_\theta$ of the ray $e^{i\theta} \Real_+$ with uniformly at-most-exponential growth as $|\xi| \to + \infty$ in $\Xi_\theta$.
\dfn{Locally uniformly Borel summable series} on $U$ are defined the same way by allowing the thickness of the tubular neighbourhood $\Xi_\theta$ to have a locally constant dependence on $x$.
We denote the subalgebras of uniformly and locally uniformly Borel summable series in the direction $\theta$ by $\bar{\cal{G}}_{\theta, \rm{u}} (U) \bbrac{\hbar} \subset \cal{G}_\rm{u} (U) \bbrac{\hbar}$ and $\bar{\cal{G}}_\theta (U) \bbrac{\hbar} \subset \cal{G} (U) \bbrac{\hbar}$, respectively.
\end{defn}

\begin{thm}[\textbf{Nevanlinna's Theorem}]{210617120300}
In both the one-dimensional and the parametric cases, for any direction $\theta$, the asymptotic expansion map $\op{\ae}$ along the halfplane arc $A_\theta = (\theta - \pi/2, \theta + \pi/2)$ bisected by $\theta$ restricts to an algebra isomorphism
\eqntag{
	\op{\ae}: \cal{G} (\bar{A}_\theta) \iso \bar{\Gevrey}_{\theta} \bbrac{\hbar}
\qtext{or}
	\op{\ae}: \cal{G} (U; \bar{A}_\theta) \iso \bar{\cal{G}}_{\theta} (U) \bbrac{\hbar}
\fullstop
}
\end{thm}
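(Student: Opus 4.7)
By rotating the picture we may assume $\theta = 0$. The plan is to exhibit the two-sided inverse to $\op{\ae}$ as the Borel–Laplace resummation operator $\hat{f} \mapsto f_0 + \Laplace_0 \circ \mathrm{AnCont}_0 \circ \hat{\Borel}$. The key analytic ingredient is a Watson-type uniqueness statement: if $f$ is holomorphic on a Borel disc $S_0 = \set{\Re(1/\hbar) > 1/\delta}$ and satisfies $|f(\hbar)| \leq \CC \MM^n n! |\hbar|^n$ for \emph{every} $n$, then optimising in $n$ at fixed $\hbar$ (taking $n \sim 1/(e\MM|\hbar|)$) yields the exponential decay $|f(\hbar)| \lesssim \exp(-c/|\hbar|)$ on $S_0$; combined with the obvious boundedness elsewhere, the Phragmén–Lindelöf principle applied in the coordinate $w = 1/\hbar$ (which maps $S_0$ to a halfplane) forces $f \equiv 0$. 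This gives injectivity of $\op{\ae}$ at once, since vanishing asymptotic expansion produces the Gevrey bounds for every $\CC, \MM$.

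For the remaining direction, first suppose $f \in \cal{G}(\bar{A}_0)$. By the preceding lemma on Borel transformability, $\phi \coleq \Borel_0[f]$ is well defined and locally uniformly convergent on $\Real_+$; rotating the integration contour in \eqref{210617101748} through a small angle using Cauchy's theorem shows $\phi$ extends holomorphically to a tubular neighbourhood $\Xi_0$ of $\Real_+$, and inserting the Gevrey estimate \eqref{210225134829} under the integral yields a uniform bound $|\phi(\xi)| \leq \AA e^{\LL|\xi|}$. Matching Taylor coefficients at $\xi = 0$ via \eqref{200704112520} applied to the partial-sum remainders of $f$ identifies this extension with the analytic continuation of $\hat{\Borel}[\op{\ae}(f)]$, so $\op{\ae}(f) \in \bar{\Gevrey}_0\bbrac{\hbar}$. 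Conversely, given $\hat{f} \in \bar{\Gevrey}_0\bbrac{\hbar}$ with analytic continuation $\phi$ of its formal Borel transform satisfying $|\phi(\xi)| \leq \AA e^{\LL|\xi|}$ on $\Xi_0$, set $f(\hbar) \coleq f_0 + \Laplace_0[\phi](\hbar)$; the integral converges absolutely on any Borel disc $S_0$ with $1/\delta > \LL$. To verify $f \simeq \hat{f}$ uniformly on $\bar{A}_0$, one uses \eqref{200704112520} termwise to rewrite $f(\hbar) - \sum_{k<n} f_k \hbar^k = \Laplace_0[\phi - T_{n-1}\phi](\hbar)$ for the Taylor polynomial $T_{n-1}\phi$ of $\phi$ at $\xi = 0$, then splits the Laplace integral into a short portion near $0$ (bounded using the radius of convergence of $\hat{\Borel}[\hat{f}]$) and a tail (bounded using $|\phi(\xi)| \leq \AA e^{\LL|\xi|}$ against the kernel $e^{-\xi/\hbar}$). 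Both contributions deliver Gevrey remainder estimates valid uniformly for $\hbar$ in any Borel subdisc of $S_0$, i.e.\ uniformly on the closed arc $\bar{A}_0$. The two constructions are mutual inverses by the classical Borel–Laplace inversion formula, and the algebra morphism property is inherited from the ring structure of asymptotic expansions, with the inverse being multiplicative because the Borel transform interchanges convolution and product.

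The parametric case is handled by running the same argument with $x \in U$ held as a parameter: uniform (resp.\ locally uniform) convergence in $x$ of the Laplace and Borel integrals -- ensured by the (locally) uniform Gevrey constants $\CC_x, \MM_x$ together with the (locally) uniform tubular thickness $\epsilon_x$ and exponential rate $\LL_x$ -- yields holomorphy of the resulting functions in $x$, and Watson's lemma applies pointwise in $x$. The main obstacle is this Watson step, and it is precisely here that one must work with the \emph{closed} arc $\bar{A}_0$: on the open arc the Gevrey constants can blow up as the direction approaches $\pm \pi/2$, so the exponential decay $e^{-c/|\hbar|}$ is not achieved uniformly on any Borel disc, and uniqueness genuinely fails -- the standard counterexample being $e^{-1/\hbar} \in \cal{G}(A_0) \setminus \cal{G}(\bar{A}_0)$, which is asymptotic to zero yet nonzero. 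Once Watson is secured, all remaining steps are standard manipulations of absolutely convergent integrals.
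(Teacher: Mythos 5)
The paper does not contain its own proof of this theorem: it states ``For a detailed proof, see \cite[Appendix \S B.4]{MY2008.06492}''. So your proposal can only be measured against the classical Nevanlinna--Sokal argument, and on the whole it does follow that line: Borel transform analyticity and exponential bounds, Laplace-transform reconstruction, and the mutual-inversion identity. The surjectivity-type steps (showing $\op{\ae}(f) \in \bar{\Gevrey}_\theta\bbrac{\hbar}$ when $f \in \cal{G}(\bar{A}_\theta)$, and conversely that $f_0 + \Laplace_\theta \circ \mathrm{AnCont}_\theta \circ \hat{\Borel}$ lands in $\cal{G}(\bar{A}_\theta)$ with the prescribed expansion, including the split of the Laplace integral into a near-origin and a tail piece) are sketched correctly and are exactly where the closed-arc hypothesis is needed, as you rightly emphasise.

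The one genuine gap is the injectivity step. After optimising in $n$ you obtain $|f(\hbar)| \lesssim e^{-c/|\hbar|}$ on the Borel disc, equivalently $|g(w)| \lesssim e^{-c|w|}$ on the halfplane $\{\Re w > 1/\delta\}$ with $g(w) = f(1/w)$. You then invoke ``the Phragm\'en--Lindel\"of principle'' to conclude $g \equiv 0$. But the maximum-modulus form of Phragm\'en--Lindel\"of only bounds $g$ by its boundary supremum; it does not yield vanishing. The fact that uniform exponential decay on a halfplane forces a bounded holomorphic function to vanish is true, but it is a boundary-uniqueness theorem for Hardy-class functions: one must observe that $\log|g(1/\delta + iy)| \leq -c|y| + O(1)$ is \emph{not} Poisson-integrable on the boundary line (because $\int |y|\,\tfrac{dy}{1+y^2}$ diverges), and then appeal to the canonical (inner--outer) factorisation or to an equivalent Carleman-type statement to conclude $g \equiv 0$. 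As written, your argument does not supply this ingredient, and citing plain Phragm\'en--Lindel\"of does not cover it. Relatedly, your parenthetical remark that a vanishing asymptotic expansion produces the Gevrey bounds ``for every $\CC,\MM$'' is not right -- the constants are fixed by $f$ -- though this does not affect the logic.

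A cleaner route, which also makes your separate injectivity step unnecessary, is to establish the inversion identity $f = f_0 + \Laplace_\theta\big[\Borel_\theta[f]\big]$ directly for any $f \in \cal{G}(\bar{A}_\theta)$ (a Fubini / Cauchy-kernel computation over the Borel-disc boundary contour, which your surjectivity argument essentially already carries out). Injectivity then falls out for free: if $\op{\ae}(f) = 0$ then $\Borel_\theta[f]$ is holomorphic on a strip around $e^{i\theta}\Real_+$ with all Taylor coefficients at the origin equal to zero, hence $\Borel_\theta[f] \equiv 0$, hence $f = f_0 + \Laplace_\theta[0] = 0$. This matches the structure suggested by the paper's \autoref{210617095806}. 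Finally, a small simplification: once $\op{\ae}$ is shown to be a bijective ring homomorphism, its inverse is automatically a ring homomorphism; you do not need to re-derive multiplicativity of $\cal{S}_\theta$ from the convolution behaviour of the Borel transform.
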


For a detailed proof, see \cite[Appendix \S B.4]{MY2008.06492}.
Note that $\op{\ae}$ restricts further to an isomorphism $\cal{G}_\rm{u} (U; \bar{A}_\theta) \iso \bar{\cal{G}}_{\theta, \rm{u}} (U) \bbrac{\hbar}$.

\begin{defn}{210617132122}
The inverse algebra isomorphism
\eqntag{
	\cal{S}_\theta \coleq \op{\ae}^{-1} : \bar{\Gevrey}_{\theta} \bbrac{\hbar} \iso \cal{G} (\bar{A}_\theta)
\qtext{or}
	\cal{S}_\theta \coleq \op{\ae}^{-1} : \bar{\cal{G}}_{\theta} (U) \bbrac{\hbar} \iso \cal{G} (U; \bar{A}_\theta)
}
is called the \dfn{Borel resummation} in the direction $\theta$.
In particular, $\cal{S}_\theta$ restricts to be the identity map on the subalgebra of convergent power series: i.e., if $\hat{f} (\hbar) \in \Complex \set{\hbar}$ or $\hat{f} (x, \hbar) \in \cal{O} (U) \set{\hbar}$, then $\cal{S}_\theta (\hat{f}) = \hat{f}$.
\end{defn}

\paragraph{Properties of Borel resummation.}
As an algebra isomorphism, Borel resummation respects sum, products, and scalar multiplication: if $\hat{f}, \hat{g} \in \bar{\Gevrey}_{\theta} \bbrac{\hbar}$ (or $\hat{f}, \hat{g} \in \bar{\cal{G}}_{\theta} \bbrac{\hbar}$) are any two (respectively locally uniformly) Borel summable series in the direction $\theta$, and $c \in \Complex$ (or respectively $c = c(x) \in \cal{O} (U)$), then
\eqntag{
	\cal{S}_\theta [\, \hat{f} + c\hat{g} \,]
		= \cal{S}_\theta [\, \hat{f} \,] + c\cal{S}_\theta [\, \hat{g} \,]
\qtext{and}
	\cal{S}_\theta [\, \hat{f} \cdot \hat{g} \,]
		= \cal{S}_\theta [\, \hat{f} \,] \cdot \cal{S}_\theta [\, \hat{g} \,]
\fullstop
}
It is also compatible with composition with convergent power series.
In particular, the identity that we use most often in this paper is
\eqntag{
	\cal{S}_\theta [\, \exp \big( \, \hat{f} \, \big) \,]
		= \exp \Big( \cal{S}_\theta [\, \hat{f} \,] \Big)
\fullstop
}
We also note two more properties with respect to calculus in the $x$-variable.
If $\hat{f} (x, \hbar) \in \bar{\cal{G}}_{\theta} (U) \bbrac{\hbar}$ is a locally uniformly Borel summable series in the direction $\theta$, then
\eqntag{
	\del_x \cal{S}_\theta [\, \hat{f} \,]
		= \cal{S}_\theta [\, \del_x \hat{f} \,]
\qtext{and}
	\int\nolimits_{x_0}^x \cal{S}_\theta [\, \hat{f} \,]
		= \cal{S}_\theta \left[\, \int\nolimits_{x_0}^x \hat{f} \,\right]
\fullstop{,}
}
for any $x_0 \in U$ where the path of integration is assumed to lie entirely in $U$.

From the proof of Nevanlinna's Theorem \ref*{210617120300} we can extract the following more explicit statement that yields a formula for the Borel resummation which is actually often taken as the definition of Borel resummation.

\begin{lem}[\textbf{Borel-Laplace identity}]{210617095806}
Let $f \in \cal{G} (\bar{A}_\theta)$ and $\hat{f} \in \bar{\Gevrey}_{\theta} \bbrac{\hbar}$ be such that $\op{\ae} (f) = \hat{f}$ and $\cal{S}_\theta [\, \hat{f} \,] = f$.
Let $\hat{\phi} (\xi) \coleq \hat{\Borel} [\, \hat{f} \,] (\xi) \in \Complex \set{\xi}$ be the (necessarily convergent) formal Borel transform of $\hat{f}$, and let $\phi (\xi) \coleq \rm{AnCont}_\theta [\, \hat{\phi} \,] (\xi)$ be its analytic continuation to a tubular neighbourhood $\Xi_\theta$ of the ray $e^{i\theta} \Real_+$ with at-most-exponential growth as $|\xi| \to + \infty$ in $\Xi_\theta$.
Then we have the following two identities:
\eqntag{
	f (\hbar) = f_0 + \Laplace_\theta [\, \phi \,] (\hbar)
\qtext{and}
	\phi (\xi) = \Borel_\theta [\, f \,] (\xi)
\fullstop
}
More generally in the parametric situation, let $f \in \cal{G}_\rm{u} (U; \bar{A}_\theta)$ and $\hat{f} \in \bar{\cal{G}}_{\theta, \rm{u}} (U; \bar{A}_\theta)$ be such that $\op{\ae} (f) = \hat{f}$ and $\cal{S}_\theta [\, \hat{f} \,] = f$.
Let $\hat{\phi} (x, \xi) \coleq \hat{\Borel} [\, \hat{f} \,] (\xi) \in \cal{O}_\rm{u} (U) \set{\xi}$ be the (necessarily uniformly convergent) formal Borel transform of $\hat{f}$, and let $\phi (x, \xi) \coleq \rm{AnCont}_\theta [\, \hat{\phi} \,] (x,\xi)$ be its analytic continuation to the domain $U \times \Xi_\theta$, where $\Xi_\theta$ in a tubular neighbourhood of the ray $e^{i\theta} \Real_+$, with uniformly at-most-exponential growth as $|\xi| \to + \infty$ in $\Xi_\theta$.
Then we have the following two identities:
\eqntag{\label{210617162329}
	f (x, \hbar) = f_0 (x) + \Laplace_\theta [\, \phi \,] (x, \hbar)
\qtext{and}
	\phi (x, \xi) = \Borel_\theta [\, f \,] (x, \xi)
\fullstop
}
In other words, in both situations we have the following formula for the Borel resummation:
\eqntag{\label{210617162325}
	f
		= \cal{S}_\theta [\, \hat{f} \,]
		= f_0 + \Laplace_\theta \Big[ \, \op{AnCont}_\theta \big[ \, \hat{\Borel}_\theta [\, \hat{f} \,] \, \big] \, \Big]
\fullstop
}
\end{lem}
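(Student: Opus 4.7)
The plan is to derive both identities by direct computation, exploiting the machinery developed in the proof of Nevanlinna's theorem (\autoref{210617120300}) carried out in \cite[Appendix B.4]{MY2008.06492}. It suffices to treat the uniform parametric case: the single-variable case is recovered by taking $U$ trivial, and the locally uniform case follows by restricting to compactly contained subdomains. By \autoref{210617101734}, $\Borel_\theta[f]$ is already a well-defined uniformly convergent contour integral on $U \times \Xi_\theta$ for some tubular neighbourhood $\Xi_\theta$ of $e^{i\theta}\Real_+$, so that both sides of the two claimed identities are \emph{a priori} defined on a common domain.

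For the first identity $\phi(x, \xi) = \Borel_\theta[f](x, \xi)$, the strategy is to show that both functions have the same Taylor expansion at $\xi = 0$; equality throughout $\Xi_\theta$ then follows by uniqueness of analytic continuation, since $\phi$ is by definition the analytic continuation of $\hat\phi$ along the ray $e^{i\theta}\Real_+$. To compute the Taylor expansion of $\Borel_\theta[f]$ at the origin, substitute the finite-order expansion $f(x, \hbar') = \sum_{k=0}^{N-1} f_k(x)\, \hbar'^k + R_N(x, \hbar')$ into the defining contour integral \eqref{210617101748}. The polynomial part evaluates by the residue formulas \eqref{200704112520} to $\sum_{k=0}^{N-2} f_{k+1}(x)\, \xi^k/k!$, which matches $\hat\phi$ through order $N-2$. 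The uniform Gevrey bound $|R_N(x, \hbar')| \leq C M^N N! |\hbar'|^N$ along $\bar{A}_\theta$ transfers through the contour integral to give $|\Borel_\theta[R_N](x, \xi)| = O(|\xi|^{N-1})$ near $\xi = 0$, so letting $N$ grow matches all remaining Taylor coefficients.

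For the second identity $f(x, \hbar) = f_0(x) + \Laplace_\theta[\phi](x, \hbar)$, insert the contour integral for $\phi = \Borel_\theta[f]$ into $\Laplace_\theta[\phi]$ and swap the order of integration. The inner $\xi$-integral becomes $\int_{e^{i\theta}\Real_+} e^{\xi(1/\hbar' - 1/\hbar)}\, d\xi = \hbar\hbar'/(\hbar' - \hbar)$, convergent once $\hbar$ is restricted to a slightly smaller Borel disc $S''_\theta$ so that $\Re(e^{i\theta}/\hbar) > \Re(e^{i\theta}/\hbar') = 1/\delta'$ along the Borel contour $\wp_\theta$. Partial fractions $\hbar / \bigl(\hbar'(\hbar' - \hbar)\bigr) = 1/(\hbar' - \hbar) - 1/\hbar'$ split the resulting contour integral into two pieces: since $f$ extends continuously to $\hbar' = 0$ with value $f_0(x)$ by its asymptotic expansion and is holomorphic on $S'_\theta \setminus \{0\}$, Cauchy's formula identifies them respectively as $f(x, \hbar)$ and $f_0(x)$. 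The main obstacle in both steps is the improper nature of $\wp_\theta$, whose closure passes through the essential singularity of the integrand at $\hbar' = 0$; the Taylor-coefficient estimate, the Fubini swap, and the evaluation of the two Cauchy-type integrals all require careful justification via truncation of $\wp_\theta$ to the finite arc $\wp_\theta(\TT)$ of \eqref{210617080526}, use of the Gevrey bound on the remainder to control endpoint contributions, and passage to the limit $\TT \to \infty$, as carried out in \cite[Appendix B.4]{MY2008.06492}. Once these technicalities are in place, the two identities combine to give the closed-form Borel resummation formula \eqref{210617162325}.
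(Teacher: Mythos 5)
The paper does not actually prove this lemma: it states that the identity \emph{can be extracted} from the proof of Nevanlinna's Theorem and defers to \cite[Appendix B.4]{MY2008.06492}. Your reconstruction follows the natural route and the outline is right, but the second-identity step contains a concrete error, not merely a technicality to be filled in. After the partial-fraction split you assert that Cauchy's formula gives $\tfrac{1}{2\pi i}\oint_\theta \frac{f(\hbar')}{\hbar'-\hbar}\,d\hbar' = f(\hbar)$ and $\tfrac{1}{2\pi i}\oint_\theta \frac{f(\hbar')}{\hbar'}\,d\hbar' = f_0$. The first is fine: the pole $\hbar'=\hbar$ is strictly interior to $\wp_\theta$, and since $f$ extends continuously to the tangent point $\hbar'=0$ the gap-closure contributes nothing as $\TT\to\infty$. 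But the second integral has its pole \emph{on} the contour, exactly at the tangent point of the Borel-disc boundary, where Cauchy's integral formula does not apply. A direct principal-value computation (parameterise $\hbar'(t)=e^{i\theta}(a+it)^{-1}$ and integrate) gives $\tfrac{1}{2\pi i}\oint_\theta \frac{d\hbar'}{\hbar'} = \tfrac{1}{2}$, so this term equals $\tfrac{1}{2}f_0$, not $f_0$. Your Fubini argument therefore produces $\Laplace_\theta[\phi] = f - \tfrac{1}{2}f_0$, which is off by $\tfrac{1}{2}f_0$. The underlying cause is that the Fubini swap itself is not justified when $f_0\neq 0$: the double integral is not absolutely convergent, and indeed the contour integral $\Borel_\theta[\text{const}]$ does not converge as a pointwise limit (your own Dirichlet-kernel--type expression would appear if you tried).

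The standard fix is to subtract the constant before transforming: set $g \coleq f - f_0 \in \cal{G}_\rm{u}(U;\bar{A}_\theta)$ with $g_0 = 0$, and note that $\hat\Borel$ annihilates constants, so $\hat\phi = \hat\Borel[\hat g]$. For $g$ the Fubini swap has the absolute integrability needed, and the term $\oint_\theta \frac{g(\hbar')}{\hbar'}\,d\hbar'$ now vanishes by Cauchy's theorem because $g(\hbar')/\hbar'$ extends continuously (with value $f_1$) across the tangent point and is holomorphic on the open disc. This yields $\Laplace_\theta[\phi] = g = f - f_0$ cleanly. Even more economical --- and closer, I expect, to what \cite[Appendix B.4]{MY2008.06492} does --- is to avoid the Fubini computation entirely: show by the usual Watson-lemma estimates that $f_0 + \Laplace_\theta[\phi] \in \cal{G}_\rm{u}(U;\bar{A}_\theta)$ with asymptotic expansion $\hat f$, then invoke the injectivity of $\op{\ae}$ from Nevanlinna's \autoref{210617120300} to conclude equality with $f$. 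Your first-identity argument (matching Taylor coefficients of $\Borel_\theta[f]$ and $\hat\phi$ at $\xi=0$) is sound in spirit, although the asserted bound $\Borel_\theta[R_N]=O(|\xi|^{N-1})$ requires the optimisation-over-$\delta'$ step that is the crux of Nevanlinna's theorem; that part you correctly defer.
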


\paragraph{Borel resummation of exponential series.}
Finally, we extend Borel resummation to exponential power series by simply factorising out the exponential prefactor.
Thus, an exponential series $\hat{\psi} = e^{\Phi / \hbar} \hat{\Psi} \in \Complex^{\exp} \bbrac{\hbar}$ is a \dfn{Borel summable exponential series} in the direction $\theta$ if $\hat{\Psi}$ is Borel summable in the direction $\theta$, and we put
\eqntag{
	\cal{S}_\theta [\, \hat{\psi} \,]
		\coleq e^{\Phi / \hbar} \cal{S}_\theta [\, \hat{\Psi} \,]
\fullstop
}
More generally in the parametric case, an exponential power series $\hat{\psi} = e^{\Phi / \hbar} \hat{\Psi} \in \cal{O}^{\exp} (U) \bbrac{\hbar}$ is a \dfn{Borel summable exponential series} in the direction $\theta$ if $\hat{\Psi}$ is Borel summable in the direction $\theta$, and we again put $\cal{S}_\theta [\, \hat{\psi} \,] \coleq e^{\Phi / \hbar} \cal{S}_\theta [\, \hat{\Psi} \,]$.
These definition are then extended to all exponential transseries by linearity of $\cal{S}_\theta$.

\section{Proofs}
\setcounter{section}{3}
\setcounter{paragraph}{0}
\label{210525081835}

In this appendix, we collect the longer proofs from the main body of the text.

\subsection[\autoref*{210118113644}: Existence and Uniqueness of Formal WKB Solutions]{\autoref{210118113644}:\,Existence and Uniqueness of Formal WKB Solutions}
\label{210525083336}

\begin{proof}
We prove existence, uniqueness, and the property of forming the basis.
All other claims are established in the course of the proof.

\textsc{1. Existence.}
To prove existence, we search for exponential power series solutions in the form of the formal WKB ansatz:
\eqntag{\label{210302174003}
	\hat{\psi} (x, \hbar) 
		= \exp \left( - \frac{1}{\hbar} \int\nolimits_{x_0}^x \hat{s} (t, \hbar) \dd{t} \right)
\fullstop{,}
}
where $\hat{s}$ is the unknown power series in $\cal{O} (U) \bbrac{\hbar}$ that we solve for.
Note that $\hat{\psi} (x_0, \hbar) = 1$ for any $\hat{s}$.
Substituting this expression back into the differential equation \eqref{210302151519}, we find that $\hat{s}$ must satisfy the formal Riccati equation \eqref{210115165557}.
Expanding \eqref{210115165557} term by term in $\hbar$ and using the fact that $U$ contains no turning points, we obtain the two exponential power series solutions \eqref{210115215219}.

\textsc{2. Uniqueness.}
Let $\hat{\psi} \coleq e^{- \Phi/\hbar} \hat{\Psi}$, with $\Phi \in \cal{O} (U) [\hbar^{-1}]$ and $\hat{\Psi} \in \cal{O} (U) \bbrac{\hbar}$, be any exponential power series solution with normalisation $\hat{\psi} (x_0, \hbar) = 1$.
Let $m \geq 0$ be the degree in $\hbar^{-1}$ of $\Phi$ and write $\Phi = \Phi_0 + \ldots + \Phi_{-m} \hbar^{-m}$.
First, the normalisation condition forces $\Phi (x_0, \hbar^{-1}) = 0$ and $\hat{\Psi} (x_0, \hbar) = 1$.
In particular, it means that $\Phi_{-i} (x_0) = 0$ for all $i$, $\Psi_k (x_0) = 0$ for all $k \geq 1$, and $\Psi_0 (x_0) = 1$.

Next, plugging $\hat{\psi}$ into \eqref{210302151519} and eliminating the exponential prefactor $e^{-\Phi/\hbar}$, we obtain an equation in formal Laurent $\hbar$-series:
\eqntag{\label{210223145210}
	\hbar^2 \del_x^2 \hat{\Psi}
	+ \Big( -2 (\del_x \Phi) + \hat{p} \Big) \hbar \del_x \hat{\Psi}
	+ \Big( (\del_x \Phi)^2 - \hbar (\del_x^2 \Phi) - \hat{p} (\del_x \Phi) + \hat{q} \Big) \hat{\Psi}
	= 0
\fullstop
}
Suppose first that $m \geq 1$.
Then the lowest order in $\hbar$ of equation \eqref{210223145210} is at $\hbar^{-2m}$, which is $(\del_x \Phi_{-m})^2 \Psi_0 = 0$.
Therefore, $\Phi_{-m}$ is identically zero, so the degree of $\Phi$ is actually $m-1$.
Continuing in this fashion, we conclude that $\Phi_0$ is the only possible nonzero coefficient of $\Phi$.
Therefore, the leading-order part of \eqref{210223145210} is at $\hbar^0$, which is
\eqn{
	\big( \del_x \Phi_0 \big)^2 - p_0 \big( \del_x \Phi_0 \big) + q_0 = 0
\fullstop
}
This is nothing but the leading-order characteristic equation \eqref{210415145506} for $s = \del_x \Phi_0$.
It follows that $\del_x \Phi_0$ is either $\lambda_+$ or $\lambda_-$, and the normalisation condition $\Phi (x_0, \hbar^{-1}) = 0$ forces $\Phi_0$ to be equal to either $\Phi_+$ or $\Phi_-$ from \eqref{210223093541}.

Next, at order $\hbar^1$, after substituting $\lambda_\alpha$ for $\alpha \in \set{+,-}$ in place of $\del_x \Phi_0$, equation \eqref{210223145210} reduces to
\eqn{
	-(2\lambda_\alpha - p_0) \del_x \Psi_0 
	+ \big( \del_x \lambda_\alpha - p_1 \lambda_\alpha + q_1 \big) \Psi_0
	+ \big( \lambda_\alpha^2 - p_0 \lambda_\alpha + q_0 \big) \Psi_1
	= 0
\fullstop
}
The factor in front of $\del_x \Psi_0$ is $-\varepsilon_\alpha \sqrt{\DD_0}$ where $\varepsilon_\pm \coleq \pm 1$.
The factor in front of $\Psi_1$ is zero, so after comparing with \eqref{210223183123}, we get:
\eqn{
	\del_x \log \Psi_0 = \varepsilon_\alpha \frac{1}{\sqrt{\DD_0}} 
			\Big( \del_x \lambda_\alpha - p_1 \lambda_\alpha + q_1 \Big)
			= s_\alpha^\pto{1}
\fullstop
}
Then the normalisation condition $\Psi_0 (x_0) = 1$ implies $\Psi_0 = \exp \int_{x_0}^x s_\alpha^\pto{1} (t) \dd{t}$.

In particular, $\Psi_0$ is nonvanishing on $U$, so we can write $\hat{\Psi} = \exp \hat{\RR}$ for some power series $\hat{\RR} \in \cal{O} (U) \bbrac{\hbar}$ which satisfies $\RR_k (x_0) = 0$ for all $k \geq 1$.
Then the same calculation shows that $\del_x \RR_k = s_\alpha^\pto{k}$ for each $k \geq 1$, and hence $\RR_k (x) = \int_{x_0}^x s_\alpha^\pto{k} (t) \dd{t}$.
Therefore, $\hat{\RR} = \int_{x_0}^x \hat{\SS}_\alpha (t) \dd{t}$, demonstrating uniqueness of formal WKB solutions amongst all possible exponential power series solutions.

\textsc{3. Basis.}
To see that the formal WKB solutions $\hat{\psi}_+, \hat{\psi}_-$ form a basis of generators for the $\Complex^{\exp} \bbrac{\hbar}$-module $\hat{\mathbb{ES}} (U)$ of all formal solutions, we note first that they are linearly independent over $\Complex^{\exp} \bbrac{\hbar}$ because their exponents $\Phi_+, \Phi_-$ are distinct nonconstant functions on $U$.
Now, suppose
\eqn{
	\hat{\psi}
	= \sum_{\alpha}^{\text{\tiny{finite}}} \hat{\CC}_\alpha e^{-\Phi_\alpha / \hbar} \hat{\Psi}_\alpha 
	\in \hat{\mathbb{S}} (U)
}
is a formal solution of \eqref{210302151519} with all $\hat{\CC}_\alpha = \hat{\CC}_\alpha (\hbar) \in \Complex^{\exp} \bbrac{\hbar}$ nonzero, and all exponents $\Phi_\alpha \in \cal{O} (U) [ \hbar^{-1} ]$ mutually linearly independent over $\Complex [\hbar^{-1}]$.
We need to show that there are $\hat{\CC}_+, \hat{\CC}_- \in \Complex^{\exp} \bbrac{\hbar}$ such that $\hat{\psi} = \hat{\CC}_+ \hat{\psi}_+ + \hat{\CC}_- \hat{\psi}_-$.

We can immediately assume that the leading-order $\Psi_\alpha^\pto{0}$ of every power series $\hat{\Psi}_\alpha$ is nonzero by absorbing any excess powers of $\hbar$ into the coefficient $\hat{\CC}_\alpha$.
Then, upon substituting $\hat{\psi}$ into \eqref{210302151519}, the same calculation that led to \eqref{210223145210} yields
{\small
\eqn{
	\sum_{\alpha}^{\text{\tiny{finite}}} \hat{\CC}_\alpha
	\Bigg[ \hbar^2 \del_x^2 \hat{\Psi}_\alpha
	+ \Big( - 2 (\del_x \Phi_\alpha) + \hat{p} \Big) \hbar \del_x \hat{\Psi}_\alpha
	+ \Big( (\del_x \Phi_\alpha)^2 - \hbar (\del_x^2 \Phi_\alpha) - \hat{p} (\del_x \Phi_\alpha) + \hat{q} \Big) \hat{\Psi}_\alpha
	\Bigg] e^{-\Phi_\alpha / \hbar}
	= 0
\fullstop
}}%
This is a sum of exponential power series with linearly independent exponents, so for it to be zero, each Laurent series in large square brackets must be zero.
Let $\Phi^\pto{-m}_\alpha$ be the highest in $\hbar^{-1}$ degree part of $\Phi_\alpha$.
As before, if $m \geq 1$, the vanishing of the expression in the bracket leads to the equation $(\del_x \Phi^\pto{-m}_\alpha)^2 = 0$ forcing $\Phi^\pto{-m}_\alpha$ to be a constant.
As a result, each factor $e^{-\Phi^\pto{-m}_\alpha \hbar^{-m-1}}$ can be absorbed into the coefficient $\hat{\CC}_\alpha$.
Continuing in this fashion to remove such exponential factors one by one from $e^{-\Phi_\alpha/\hbar}$, we are left with only the zeroth-order term $\Phi^\pto{0}_\alpha (x)$, which must satisfy the equation
\eqn{
	\big( \del_x \Phi^\pto{0}_\alpha \big)^2 - p_0 \big( \del_x \Phi^\pto{0}_\alpha \big) + q_0 = 0
\fullstop
}
This is nothing but the leading-order equation \eqref{210415145506} for $\lambda = \del_x \Phi^\pto{0}_\alpha$.
Therefore, $\del_x \Phi^\pto{0}_\alpha$ can only either be the leading-order solution $\lambda_+$ or $\lambda_-$, and so $\Phi^\pto{0}_\alpha$ can only differ from either $\Phi_+$ or $\Phi_-$ by a constant which can once again be absorbed into $\hat{\CC}_\alpha$.
Thus, we conclude that each polynomial $\Phi_\alpha$ is either $\Phi_+$ or $\Phi_-$, and therefore the formal solution $\hat{\psi}$ is actually of the form $\hat{\psi} = \hat{\CC}_+ e^{-\Phi_+ / \hbar} \hat{\Psi}_+ + \hat{\CC}_- e^{-\Phi_- / \hbar} \hat{\Psi}_-$ for some power series $\hat{\Psi}_\pm$, which by the exact same calculation as above must be $\exp \int_{x_0}^x \hat{\SS}_\pm \dd{t}$.
\end{proof}

\subsection{Existence and Uniqueness of Exact Characteristic Solutions}
\label{210525082957}

The strategy to construction an exact solution of the Riccati equation \eqref{210304143646} is to restrict the problem to a horizontal halfstrip domain containing $x_0$ and transform the equation into standard form which is easier to solve using the Borel-Laplace method.
First, we describe this standard form of the Riccati equation and explain how to solve it.

\paragraph{Singularly perturbed Riccati equations in standard form.}
Fix $\epsilon, \delta > 0$.
Let $\Omega_+ \subset \Complex_z$ and $S_+ \subset \Complex_\hbar$ be respectively a tubular neighbourhood of radius $\epsilon$ of the positive real axis $\Real_+ \subset \Complex_z$ and a Borel disc of diameter $\delta$ bisected by the positive real axis:
\eqn{
	\Omega_+ \coleq \set{z ~\big|~ \op{dist} (z, \Real_+) < \epsilon}
\qtext{and}
	S_+ \coleq \set{ \hbar ~\big|~ \Re (\hbar^{-1}) > 1/\delta}
\fullstop
}

Consider the following singularly perturbed Riccati equation on $\Omega_+ \times S_+$:
\eqntag{
\label{210418182218}
	\hbar \del_z f - f = \hbar \big( \AA_0 + \AA_1 f + f^2\big)
\fullstop{,}
}
where $\AA_1, \AA_0$ are holomorphic functions of $(z, \hbar) \in \Omega_+ \times S_+$ which admit uniform Gevrey asymptotics:
\eqn{
	\AA_i (z, \hbar) \simeq \hat{\AA}_i (z, \hbar)
\quad
\text{as $\hbar \to 0$ along $[-\pi/2, +\pi/2]$, unif. $\forall z \in \Omega_+$\fullstop}
}

\begin{lem}[\textbf{Main Technical Lemma}{ \cite[Lemma 5.6]{MY2008.06492}}]{210421183535}
The Riccati equation \eqref{210418182218} has a unique formal solution $\hat{f} \in \cal{O}(\Omega_+) \bbrac{\hbar}$ and its leading-order term is $0$; i.e., $\hat{f} \in \hbar \cal{O}(\Omega_+) \bbrac{\hbar}$.
Moreover, for every $\epsilon' \in (0, \epsilon)$, there is some $\delta' \in (0, \delta]$ such that the Riccati equation \eqref{210418182218} has a unique holomorphic solution $f$ on
\eqntag{\label{210522170909}
	\Omega'_+ \times S'_+
	\coleq 
	\set{z ~\big|~ \op{dist} (z, \Real_+) < \epsilon'} 
		\times \set{ \hbar ~\big|~ \Re (\hbar^{-1}) > 1/\delta'}
	\subset
	\Omega_+ \times S_+
\fullstop{,}
}
which admits $\hat{f}$ as a uniform Gevrey asymptotic expansion in the right halfplane: 
\eqntag{\label{210522173054}
	f (z, \hbar) \simeq \hat{f} (z, \hbar)
\quad
\text{as $\hbar \to 0$ along $[-\pi/2, +\pi/2]$, unif. $\forall z \in \Omega'_+$\fullstop}
}
\end{lem}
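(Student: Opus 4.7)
The plan is to convert the Riccati problem into a convolutive integral equation on the Borel plane, solve that by a contraction argument, and then recover $f$ by Laplace transform.

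First, I would dispose of the formal part. Substituting $\hat f = \sum_{n\geq 1} f_n(z) \hbar^n$ (we anticipate the leading order is zero because the $-f$ term on the left would otherwise be forced to cancel the zeroth-order right side, which is $O(\hbar)$) into \eqref{210418182218} and equating coefficients of $\hbar^n$ yields the recursion $f_1 = -A_{0,0}$ and, for $n \geq 2$,
\[
f_n \;=\; \partial_z f_{n-1} \;-\; A_{0,n-1} \;-\!\!\!\sum_{\substack{k+j=n-1 \\ j\geq 1}}\!\! A_{1,k} f_j \;-\!\!\!\sum_{\substack{k+j=n-1 \\ k,j\geq 1}}\!\! f_k f_j.
\]
This determines the coefficients uniquely in $\cal O(\Omega_+)$, giving existence and uniqueness of $\hat f \in \hbar \cal O(\Omega_+)\bbrac{\hbar}$.

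Next, the analytic step. Put $\phi \coleq \hat{\Borel}[\hat f]$ and $\alpha_i \coleq \hat\Borel[\hat A_i]$. Applying $\hat\Borel$ to \eqref{210418182218} and using $\hat\Borel[\hbar g] = g_0 + \int_0^\xi \hat\Borel[g](z,y)\dd y$ together with the convolution rule, one obtains a convolutive integro-differential equation of the schematic form
\[
\partial_z \phi \;-\; \partial_\xi \phi \;=\; \alpha_0 \;+\; A_{1,0}\phi \;+\; \alpha_1 * \phi \;+\; \phi * \phi,
\]
with the initial condition $\phi(z,0) = f_1(z)= -A_{0,0}(z)$. Integrating along the characteristic curves $\xi \mapsto z - \xi$ of the transport operator $\partial_z - \partial_\xi$ converts this into a fixed-point equation $\phi = \cal T[\phi]$, where $\cal T$ involves integrating the right-hand side along characteristics from the initial datum. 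Because $\hat A_0,\hat A_1$ are uniformly Gevrey by hypothesis, \autoref{210617102534} guarantees that $\alpha_0,\alpha_1 \in \cal O_\rm{u}(\Omega_+)\set{\xi}$ and, by Nevanlinna's theorem, they extend holomorphically to a tubular neighbourhood $\Xi_+$ of $\Real_+$ with uniform at-most-exponential growth in $\xi$.

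The main work is then the fixed-point argument for $\cal T$. I would introduce the Banach space of holomorphic functions $\phi$ on $\Omega_+ \times \Xi_+$ equipped with the exponentially weighted supremum norm $\|\phi\|_L \coleq \sup_{(z,\xi)} |\phi(z,\xi)| e^{-L|\xi|}$, for an auxiliary parameter $L > 0$ to be chosen. Standard convolution estimates give $\|\phi_1 * \phi_2\|_L \leq L^{-1}\|\phi_1\|_L\|\phi_2\|_L$, and the transport-integral dampens by at least $1/L$ as well. Choosing $L$ sufficiently large, one obtains that $\cal T$ maps a closed ball of appropriate radius into itself and is a contraction there; the Banach fixed-point theorem then yields a unique solution $\phi$ on $\Omega_+ \times \Xi_+$ with uniform exponential growth $|\phi(z,\xi)| \leq C e^{L|\xi|}$. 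This is precisely where the hypothesis that $A_0,A_1$ admit asymptotics on the \emph{closed} arc $[-\pi/2,+\pi/2]$ is essential: without it, $\alpha_0,\alpha_1$ need not extend to a full tubular neighbourhood.

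The final step is to set $f(z,\hbar) \coleq \Laplace_+[\phi](z,\hbar) = \int_0^{+\infty} e^{-\xi/\hbar}\phi(z,\xi)\dd\xi$. The exponential bound on $\phi$ makes this Laplace integral uniformly convergent for $(z,\hbar) \in \Omega'_+ \times S'_+$ once $\epsilon' < \epsilon$ and $1/\delta' > L$, and the standard rules converting convolution/transport to pointwise multiplication and $\hbar\partial_z$ show that $f$ satisfies the original Riccati equation \eqref{210418182218}. Nevanlinna's Theorem \ref{210617120300} immediately gives the uniform Gevrey asymptotic expansion \eqref{210522173054}, and the uniqueness assertion of that theorem propagates back to give uniqueness of $f$: any other holomorphic solution on $\Omega'_+ \times S'_+$ with expansion $\hat f$ would have the same Borel transform, hence would coincide with $\cal S_0[\hat f] = \Laplace_+[\phi]$. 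The principal obstacle in the whole argument is the contraction estimate for the quadratic convolution $\phi * \phi$, which dictates both the choice of exponential weight $L$ and the shrinking $\epsilon'<\epsilon$, $\delta' \leq \delta$; all other steps are algebraic bookkeeping or direct applications of the Borel–Laplace dictionary of \autoref{210616130753}.
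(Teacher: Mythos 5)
Your strategy matches the paper's at the level of architecture: formal solution, Borel transform to a convolutive integral equation along characteristics, iterate, Laplace back, invoke Nevanlinna for uniqueness and Gevrey asymptotics. However, you replace the paper's method of successive approximations (with explicit $\tfrac{|\xi|^n}{n!}$-weighted estimates controlled by a convergent generating series via the Implicit Function Theorem) by a Banach fixed-point argument in an exponentially weighted sup-norm, and there is a genuine gap in that step.

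The claimed inequality $\|\phi_1 \ast \phi_2\|_L \leq L^{-1}\|\phi_1\|_L \|\phi_2\|_L$ for $\|\phi\|_L = \sup |\phi|\,e^{-L|\xi|}$ is false. If both factors saturate the bound, then
$|\phi_1 \ast \phi_2 (\xi)| \leq \|\phi_1\|_L \|\phi_2\|_L \int_0^{|\xi|} e^{L(|\xi|-y)} e^{Ly}\,\mathrm{d}y = \|\phi_1\|_L\|\phi_2\|_L\,|\xi|\,e^{L|\xi|}$,
so the best you get is a multiplicative factor of $|\xi|$, which is unbounded on the tubular neighbourhood $\Xi_+$. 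The $1/L$ gain is available only when one of the two convolution factors is \emph{bounded} rather than exponentially growing — but your unknown $\phi$ enters $\phi \ast \phi$ twice with exponential growth. Consequently the operator $\cal T$ does not map a ball of fixed radius in your space into itself, and the contraction estimate does not close: iterating $\cal T$ produces an exponential rate that grows with each iteration rather than a geometric series in a fixed $\|\cdot\|_L$. You do correctly observe that the transport integral $\II_+$ gains a factor $1/L$; what's missing is that this gain does not propagate through the quadratic convolution term in the sup-norm.

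Two standard repairs exist, and they are genuinely different from each other. One is to replace the sup-norm by an $L^1$-type weighted norm $\|\phi\|_L \coleq \sup_z \int_{\Real_+} |\phi(z,\xi)| e^{-L|\xi|}\,|\mathrm{d}\xi|$, which \emph{is} a Banach algebra under convolution (with constant $1$, no gain), and for which the integral operator $\II_+$ contributes a genuine $1/L$ gain by Fubini; then the fixed-point argument closes for $L$ large. The other is what the paper does: prove termwise estimates $|\phi_n| \leq \AA\BB^n\tfrac{|\xi|^n}{n!}e^{\LL|\xi|}$ by induction, where the recursion $\MM_n = \MM(\MM_{n-1} + \MM_{n-2} + \sum_{i+j=n-2}\MM_i\MM_j)$ is controlled by showing its generating series satisfies an algebraic equation $\hat{p} = 1 + \hat{p}t + \hat{p}t^2 + \hat{p}^2t^2$ solvable by the Holomorphic Implicit Function Theorem. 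The paper's route has the added benefit that the explicit recursion for $\phi_n$ feeds directly into \autoref{210527135451}, whereas a pure contraction argument produces $\phi$ less constructively. Either approach works; your proposal needs one of these replacements to be correct.
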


A proof is presented in \cite{MY2008.06492}, but since it does most of the heavy-lifting in the present paper, we include here a sketch for completeness.

\begin{proof}[Proof sketch.]
Existence and uniqueness of the formal solution $\hat{f}$ is easy to deduce by expanding the Riccati equation \eqref{210418182218} in powers of $\hbar$ (see \cite[Theorem 3.8]{MY2008.06492} for details).
We now prove the existence and uniqueness of $f$.

\textsc{1. The analytic Borel transform.}
Let $\epsilon' \in (0, \epsilon)$ be fixed.
The Riccati equation \eqref{210418182218} is solved using the Borel-Laplace method.
The first step is to apply the analytic Borel transform:
\eqntag{
	\alpha_i (z, \xi) \coleq \Borel_+ [ \: \AA_i \: ] (z, \xi)
\fullstop
}
Here, $\Borel_+$ is the Borel transform \eqref{210617101748} for $\theta = 0$.

Since each $\AA_i$ admits uniform Gevrey asymptotics on $\Omega_+$ as $\hbar \to 0$ along the closed arc $[-\pi/2, +\pi/2]$, it follows from Nevanlinna's \autoref{210617120300} that there is some tubular neighbourhood $\Xi_+ \coleq \set{ \xi ~\big|~ \op{dist} (\xi, \Real_+) < \rho}$ of radius $\rho > 0$ (which we take to be so small that $\epsilon' + \rho < \epsilon$) such that $\alpha_1, \alpha_2$ define holomorphic functions on $\Omega_+ \times \Xi_+ \subset \Complex^2_{z\xi}$ with uniformly at most exponential growth as $|\xi| \to + \infty$.
They furthermore satisfy the following relations: for all $(z,\xi) \in \Omega_+ \times \Xi_+$,
\eqntag{
	\AA_0 (z, \hbar) = a_0 (z) + \Laplace_+ [ \: \alpha_0 \: ]
\qtext{and}
	\AA_1 (z, \hbar) = a_1 (z) + \Laplace_+ [ \: \alpha_1 \: ]
\fullstop
}
Here, $\Laplace_+$ is the Laplace transform \eqref{200624181217} for $\theta = 0$.
Thus, dividing \eqref{210418182218} through by $\hbar$ and applying the analytic Borel transform $\Borel_+$, we obtain the following PDE with convolution product:
\eqntag{\label{210519160413}
	\del_z \phi - \del_\xi \phi 
		= \alpha_0
			+ a_1 \phi
			+ \alpha_1 \ast \phi
			+ \phi \ast \phi
\fullstop{,}
}
where the unknown variables $f$ and $\phi$ are related by
\eqntag{\label{210522174710}
	\phi = \Borel_+ [\: f \:]
\qtext{and}
	f = \Laplace_+ [\: \phi \:]
\fullstop
}

\textsc{2. Solving the PDE.}
To solve the PDE \eqref{210519160413}, we consider another holomorphic change of variables sending $(z, \xi) \mapsto (z+\xi, \xi) \eqcol (w,t)$ which transforms the differential operator $\del_z - \del_\xi$ into $-\del_t$.
Then \eqref{210519160413} can be written equivalently as the following integral equation:
\eqntag{
\label{210519162053}
	\phi (z, \xi) = a_0 (z) - \int\nolimits_0^\xi \evat{\Big( \alpha_0
			+ a_1 \phi
			+ \alpha_1 \ast \phi
			+ \phi \ast \phi \Big)}{(z + \xi - u, u)} \dd{u}
\fullstop
}
Introduce the following shorthand notation: for any function $\alpha = \alpha (z, \xi)$, let
\eqntag{
\label{210611155612}
	\II_+ [\: \alpha \:] (z, \xi) 
		\coleq - \int\nolimits_0^\xi \alpha (z + \xi - u, u) \dd{u}
		= - \int\nolimits_0^\xi \alpha (z + t, \xi - t) \dd{t}
\fullstop
}
In this notation, the integral equation \eqref{210519162053} becomes
\eqntag{
\label{210519162121}
	\phi = a_0 + \II_+ \big[ \: \alpha_0
			+ a_1 \phi
			+ \alpha_1 \ast \phi
			+ \phi \ast \phi
			\: \big]
\fullstop
}

\textsc{3. Method of successive approximations.}
To solve \eqref{210519162121}, we use the method of successive approximations.
Consider a sequence of holomorphic functions $\set{\phi_n}_{n=0}^\infty$ defined recursively by $\phi_0 \coleq a_0$, $\phi_1 \coleq \II_+ \big[ \alpha_0 + a_1 \phi_0 \big]$, and for $n \geq 2$ by
\eqntag{\label{210522160020}
	\phi_n 
		\coleq \II_+ \left[ 
			a_1 \phi_{n-1} 
			+ \alpha_1 \ast \phi_{n-2} 
			+ \sum_{\substack{i,j \geq 0 \\ i + j = n-2}}
				\phi_i \ast \phi_j
			\right]
\fullstop
}
Notice that each $\phi_n$ defines a holomorphic function on the domain
\eqn{
	W \coleq \set{ (z, \xi) \in \Omega_+ \times \Xi_+ ~\big|~ z + \xi \in \Omega_+}
\fullstop
}
Notice furthermore that $\Omega'_+ \times \Xi_+ \subset W$.
The most technical part of the proof is to show that the infinite series 
\eqntag{\label{210522160040}
	\phi (z, \xi) \coleq \sum_{n=0}^\infty \phi_n (z, \xi)
}
is uniformly convergent for all $(z, \xi) \in W$ to a holomorphic solution of the integral equation \eqref{210519162121} on $W$ with uniformly at-most-exponential growth at infinity in $\xi$.

Assuming that $\phi$ is uniformly convergent on $W$, it is a straightforward computation to show that $\phi$ indeed satisfies \eqref{210519162121}.
The bulk of the proof is therefore to demonstrate uniform convergence of \eqref{210522160040}.

\textsc{4. Uniform convergence.}
Let $\MM, \LL > 0$ be constants such that $|a_i| \leq \MM$ and $|\alpha_i| \leq \MM e^{\LL |\xi|}$ for all ${(z, \xi) \in W}$.
These constants exist because $W \subset \Omega_+ \times \Xi_+$.
We need to show that there are constants $\AA, \BB > 0$ such that, for all $n \geq 0$,
\eqntag{\label{200702211322}
	\llap{$\big(\forall (z, \xi) \in W \big)$\qqqqquad}
	\big| \phi_n (z, \xi) \big| \leq \AA \BB^n \frac{|\xi|^n}{n!} e^{\LL |\xi|}
\fullstop
}
To show this, we first recursively construct a sequence of positive real numbers $(\MM_n)_{n=0}^\infty$ such that, for all $n \geq 0$,
\eqntag{\label{200702211319}
	\llap{$\big(\forall (z, \xi) \in W \big)$\qqqqquad}
	\big| \phi_n (z, \xi) \big| \leq \MM_n \frac{|\xi|^n}{n!} e^{\LL |\xi|}
\fullstop	
}
We then show that there are constants $\AA, \BB > 0$ such that $\MM_n \leq \AA \BB^n$ for all $n$.

Since $\phi_0 = a_0$, the constant $\MM_0$ can be taken to be $1$.
The constants $\MM_n$ can then be constructed by induction with the help some elementary integral estimates.
In the end, our constants $\MM_n$ are given by the following recursive formula:
\eqntag{\label{200702211315}
	\MM_n \coleq 
			\MM 
			\bigg( \MM_{n-1} + \MM_{n-2}
				+ \sum_{\substack{i,j \geq 0 \\ i + j = n-2}} 
					\MM_i \MM_j
			\bigg)
\fullstop
}
To obtain constants $\AA, \BB$ for the bounds \eqref{200702211322}, we consider the following power series in an abstract variable $t$:
\eqn{
	\hat{p} (t) \coleq \sum_{n=0}^\infty \MM_n t^n \in \Complex \bbrac{t}
\fullstop
}
It is enough to prove that $\hat{p} (t)$ is convergent.
The key is to observe that it satisfies the algebraic equation $\hat{p} = \Big( 1 + \hat{p} t + \hat{p} t^2 + \hat{p}^2 t^2 \Big)$.
Then we consider the holomorphic function $\FF = \FF (p,t) \coleq - p + \Big( 1 + pt + pt^2 + p^2t^2 \Big)$ of two variables.
It satisfies $\FF (1,0) = 0$ and has a nonvanishing derivative with respect to $p$ at $(p,t) = (1,0)$, so we can use the Holomorphic Implicit Function Theorem to conclude that there is a function $p (t)$, holomorphic at $t = 0$, whose Taylor series is $\hat{p} (t)$.

Finally, uniform convergence of $\phi$ follows from the following calculation, which simultaneously shows that $\phi$ has at most exponential growth at infinity in $\xi$:
\eqntag{\label{200814082657}
	\big| \phi (z, \xi) \big|
		\leq \sum_{n=0}^\infty |\phi_n|
		\leq \sum_{n=0}^\infty \AA \BB^n \frac{|\xi|^n}{n!} e^{\LL |\xi|}
		\leq \AA e^{ (\BB + \LL) |\xi|}
\fullstop
}

\textsc{5. The Laplace transform.}
Applying the Laplace transform to $\phi$ yields
\eqntag{\label{200702212531}
	f (z, \hbar) 
		\coleq \Laplace \big[ \phi \big] (z, \hbar)
		= \int_{0}^{+\infty} e^{- \xi / \hbar} \phi (z, \xi) \dd{\xi}
\fullstop
}
Thus, whenever $\delta' < (\BB + \LL)^{-1}$ and $\leq \delta$, this integral converges uniformly on the Borel disc $S'_+ \coleq \set{ \hbar ~\big|~ \Re (\hbar^{-1}) > 1/\delta'}$, yielding therefore a holomorphic solution $f$ of the Riccati equation.
Moreover, Nevanlinna's Theorem implies that $f$ admits uniform Gevrey asymptotics on $\Omega'_+$ as $\hbar \to 0$ along $[-\pi/2, +\pi/2]$.

\textsc{6. Uniqueness.}
Uniqueness of $f$ also follows from Nevanlinna's Theorem, for if $f'$ is another such solution, then $f - f'$ is a holomorphic function on $\Omega'_+ \times S'_+$ which is uniformly Gevrey asymptotic to $0$ as $\hbar \to 0$ along $[-\pi/2, +\pi/2]$, hence must be identically zero.
\end{proof}

Collecting the various steps of the proof of \autoref{210421183535} and using Nevanlinna's Theorem yields immediately the following statements.

\begin{lem}{210522171904}
The solution $f$ from \autoref{210421183535} has the following properties.
\begin{enumerate}
\item [\textup{(P1)}]
The analytic Borel transform 
\eqntag{
	\phi (z, \xi) = \Borel_+ [\, f \,] (z, \xi)
		\coleq \frac{1}{2\pi i} \oint f (z, \hbar) e^{\xi / \hbar} \frac{\dd{\hbar}}{\hbar^2}
}
is uniformly convergent for all $(z, \xi) \in \Omega'_+ \times \Xi''_+$ where $\Xi''_+ \coleq \set{ \xi ~\big|~ \op{dist} (\xi, \Real_+) < \rho}$ is a tubular neighbourhood of any radius $\rho \in (0, \epsilon - \epsilon')$.
\item [\textup{(P2)}]
The Laplace transform of $\phi$ is uniformly convergent for all $(z, \hbar) \in \Omega'_+ \times S'_+$ and
\eqntag{\label{210522161859}
	f (z, \hbar) 
		= \Laplace_+ \big[ \, \phi \, \big] (z, \hbar)
		= \int_0^{+\infty} e^{- \xi / \hbar} \phi (z, \xi) \dd{\xi}
\fullstop
}
\item [\textup{(P3)}]
In other words, \textup{(P1)} and \textup{(P2)} together mean that $f$ is the uniform Borel resummation of its asymptotic power series $\hat{f}$: for all $(z, \hbar) \in \Omega'_+ \times S'_+$,
\eqntag{
	f (z, \hbar) = \cal{S}_+ \big[ \, \hat{f} \, \big] (z, \hbar)
\fullstop
}
\item [\textup{(P4)}]
If the coefficients $\AA_0, \AA_1$ in \eqref{210418182218} are periodic in $z$ with period $\omega \in \Complex$, then $f$ is also periodic in $z$ with the same period $\omega$.
\end{enumerate}
\end{lem}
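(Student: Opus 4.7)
The plan is to read off properties (P1)--(P3) directly from ingredients already assembled in the proof of Lemma~\ref{210421183535}, and then to prove (P4) by a uniqueness argument. All four parts follow either from the explicit Borel-Laplace construction used to produce $f$ or from the uniqueness clause of that same lemma combined with Nevanlinna's Theorem~\ref{210617120300}.

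For (P1), I would recall that in step~3 of the proof of Lemma~\ref{210421183535} a holomorphic function $\phi$ was constructed on $W \supset \Omega'_+ \times \Xi_+$ as the uniformly convergent sum $\phi = \sum_{n=0}^\infty \phi_n$, with the at-most-exponential bound \eqref{200814082657}. Since the leading-order term of $\hat{f}$ is zero, the Borel-Laplace identity in Lemma~\ref{210617095806} asserts that $\phi = \Borel_+[\,f\,]$ on the overlap of their natural domains of definition. This identifies $\Borel_+[\,f\,]$ with the explicitly convergent $\phi$ on $\Omega'_+ \times \Xi''_+$ for any $\rho \in (0, \epsilon-\epsilon')$, yielding (P1). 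Property (P2) is then the Laplace representation \eqref{200702212531} obtained in step~5 of the proof of Lemma~\ref{210421183535}, while (P3) is the conjunction of (P1) and (P2) through formula \eqref{210617162325} in the definition of Borel resummation.

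For (P4), I would argue by uniqueness in two stages. First, the formal solution $\hat{f}$ of \eqref{210418182218} is obtained by solving a triangular tower of linear equations whose coefficients are polynomials in $\hat{\AA}_0, \hat{\AA}_1$ and their $z$-derivatives; since periodicity in $z$ is preserved under these operations, $\hat{f}(z+\omega, \hbar) = \hat{f}(z, \hbar)$. Next, consider the shifted function $\tilde{f}(z, \hbar) \coleq f(z+\omega, \hbar)$. Because $\Omega'_+$ is a tubular neighbourhood of $\Real_+$, for any compactly contained sub-tube the shift by $\omega$ stays inside $\Omega_+$, so $\tilde{f}$ is a well-defined holomorphic solution of \eqref{210418182218} on a domain of the form \eqref{210522170909} (with possibly smaller $\epsilon'$), and it admits $\hat{f}(z+\omega,\hbar) = \hat{f}(z,\hbar)$ as its uniform Gevrey asymptotic expansion along $[-\pi/2, +\pi/2]$. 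The uniqueness clause of Lemma~\ref{210421183535} then forces $\tilde{f} = f$, which is the desired periodicity.

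The only genuinely non-bookkeeping step is (P4), and the main subtlety there is simply ensuring that one applies uniqueness on a domain where both $f$ and its translate are defined; since uniqueness is a local statement once Gevrey asymptotics in the closed halfplane have been verified, this verification amounts to observing that translation by $\omega$ maps tubular neighbourhoods of $\Real_+$ into tubular neighbourhoods of $\Real_+$ (up to shrinking the radius), which is immediate.
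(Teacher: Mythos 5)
For properties (P1)--(P3) your approach is essentially identical to the paper's: the paper simply observes that these follow by collecting the ingredients of the proof of Lemma~\ref{210421183535} together with Nevanlinna's Theorem~\ref{210617120300}, and you have spelled this out correctly by pointing to the uniformly convergent series $\phi = \sum_n \phi_n$ from step 3, the Laplace representation \eqref{200702212531} from step 5, and the Borel--Laplace formula \eqref{210617162325}.

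For (P4), however, you take a genuinely different route. The paper's proof works entirely in the Borel plane: it notes that the analytic Borel transform is taken with respect to $\hbar$ and hence sends $z$-periodic coefficients $\AA_i$ to $z$-periodic functions $\alpha_i$ and $a_i$; then it observes that the integral operator $\II_+$ of \eqref{210611155612} manifestly preserves $z$-periodicity, so the whole recursion \eqref{210522160020} produces a $z$-periodic Borel transform $\phi$, whence $f = \Laplace_+[\phi]$ is periodic as well. Your argument instead works at the level of $\hbar$: you first verify that the formal solution $\hat{f}$ is periodic (a necessary extra observation that the paper's proof sidesteps), then define the translate $\tilde{f}(z,\hbar) = f(z+\omega, \hbar)$, check that it is a solution with the same Gevrey asymptotics $\hat{f}$, and invoke the uniqueness clause of Lemma~\ref{210421183535}. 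Both arguments are correct for the case that actually occurs in the application, namely $\omega \in \Real_{+}$ (as set up in step 2 of the proof of Theorem~\ref{210116200501}): translation by a positive real number maps $\Omega'_+$ into itself, so $\tilde{f}$ is defined on $\Omega'_+ \times S'_+$ and no shrinking is required at all. Your auxiliary claim that ``for any compactly contained sub-tube the shift by $\omega$ stays inside $\Omega_+$'' is imprecise for general $\omega \in \Complex$ (it can fail, say, when $|\Im\omega| \geq \epsilon - \epsilon'$, or when $\Re\omega < 0$), so for the lemma as literally stated you would want either to assume $\omega \in \Real_+$ or, as the paper does, work in the Borel plane where the periodicity of $\phi$ is read off algebraically from the recursion and the issue of a drifting domain does not arise in the same way. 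The trade-off: your uniqueness argument is cleaner and more conceptual, but is slightly more delicate about domains; the paper's explicit argument is more elementary and avoids the domain subtlety entirely.
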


\begin{proof}
The only statement that requires an additional comment is (P4).
Referring to the proof of \autoref{210421183535}, we can see that since the analytic Borel transform of $\AA_i$ is done with respect to the variable $\hbar$, periodicity of $\AA_i$ in the variable $z$ implies the same periodicity of the coefficients $\alpha_i$ and $a_i$ in the differential equation \eqref{210519160413}.
Then the periodicity of the solution $\phi$ boils down to whether the integral operator $\II_+$ from \eqref{210611155612} preserves the periodic property, which it clearly does.
\end{proof}

\subsection[\autoref*{210116200501}: Existence and Uniqueness of Exact WKB Solutions]{\autoref{210116200501}: Existence and Uniqueness of Exact WKB Solutions}
\label{210525083210}

We prove both \autoref{210116200501} and \autoref{210603110237} simultaneously.

\begin{proof}
First, the uniqueness of $\psi_\alpha$ follows from the asymptotic property \eqref{210516153319}.
Indeed, if $\psi'_\alpha$ is another such solution on $U \times S'$, then they are both asymptotic to $\hat{\psi}_\alpha$, and hence their difference must be locally uniformly Gevrey asymptotic to $0$ as $\hbar \to 0$ along $\bar{A}$.
Then Nevanlinna's \autoref{210617120300} implies that $\psi_\alpha - \psi'_\alpha$ must be identically $0$.
Now we prove the existence of $\psi_\alpha$.

\textsc{1. Boiling the proof down to a local construction.}
Given any $x_i \in U$, change the basepoint of the Liouville transformation $\Phi$ from $x_0$ to $x_i$ by defining $\Phi_i (x) \coleq \Phi (x) + \Phi (x_i)$.
Since $U$ is simply connected, this definition is unambiguous.
Take $\epsilon_i > 0$ sufficiently small that the WKB disc $V_i \coleq \Phi_i^{-1} \big( \set{|z| < \epsilon_i} \big)$ centred at $x_i$ satisfies all the hypotheses of the theorem (with ``$V$'' replaced by ``$V_i$'').
Choose any $\epsilon'_i \in (0, \epsilon_0)$ and let $V'_i \coleq \Phi_i^{-1} \big( \set{|z| < \epsilon'_i} \big)$.

The construction of the exact WKB solution $\psi_\alpha$ proceeds as follows.
First, we take $x_i = x_0$ and construct the unique solution $\psi_{\alpha, 0}$ on $V'_{0} \times S_0$ satisfying conditions \eqref{210516153316} and \eqref{210516153319}, where $S_0 \subset S_\theta$ is a possibly smaller Borel disc.
Then we take any $x_i \neq x_0$ sufficiently close to $x_0$ such that $x_0 \in V'_{i}$, and construct the unique solution $\psi_{\alpha,i}$ on $V'_{i} \times S_i$ (where $S_\alpha \subset S_\theta$ is a possibly smaller Borel disc) which satisfies condition \eqref{210516153319} as well as the normalisation condition $\psi_{\alpha,i} (x_i, \hbar) = \psi_{\alpha,0} (x_1, \hbar)$ replacing condition \eqref{210516153316}.
Take $S'$ to be the smaller of the two Borel discs $S_0, S_i$.
Then by uniqueness, $\psi_{\alpha,0}$ and $\psi_{\alpha,i}$ agree on the intersection $V'_{i} \cap V'_{0}$ and therefore define the unique extension of the solution $\psi_{\alpha,0}$ to $(V'_{0} \cup V'_{i}) \times S'$ satisfying conditions \eqref{210516153316} and \eqref{210516153319}.

Continue to extend the solution $\psi_{\alpha,0}$ in this fashion to larger and larger subsets of $U$.
We conclude that for any compactly contained $V' \subset U$, there is a Borel disc $S' \subset S$ of nonzero diameter $\delta' \in (0, \delta]$ such that $\psi_{\alpha,0}$ extends to a unique holomorphic solution $\psi_\alpha$ on $V' \times S'$ satisfying conditions \eqref{210516153316} and \eqref{210516153319}.
So let us now fix any such $V'$.
For example, $V' = V'_0 = \Phi^{-1} \big( \set{|z| < \epsilon_0} \big)$ from before.
Finally, using the usual Parametric Existence and Uniqueness Theorem for linear ODEs (see, e.g., \cite[Theorem 24.1]{MR0460820}), the solution $\psi_\alpha$ can be analytically continued to any simply connected domain in $X$, thus in particular defining the desired solution on $U \times S'$.

\textsc{2. Local construction.}
Thus, it remains to construct the solution $\psi_{\alpha,i}$ on $V'_{i} \times S_i$ for any $i$.
The strategy is to construct the unique exact solution $s_{\alpha,i}$ of the Riccati equation \eqref{210304143646} on $V'_{i}$ with leading-order $\lambda_\alpha$, and then use formula \eqref{210218230927} to define $\psi_{\alpha,i}$.
From now on, we drop the label ``$i$'' because the derivation of $s_{\alpha,i}$ is verbatim the same for any $i$.
Thus, we consider the WKB discs $V \coleq \Phi^{-1} \big( \set{|z| < \epsilon} \big)$ and $V' \coleq \Phi^{-1} \big( \set{|z| < \epsilon'} \big)$, as well as a pair of nested horizontal halfstrips in the $z$-plane,
\eqn{
	\Omega'_+ \coleq \set{\op{dist} (z, \Real_+) < \epsilon'}
		~\subset~
	\Omega_+ \coleq \set{\op{dist} (z, \Real_+) < \epsilon}
\fullstop{,}
}
so that $V_{\theta,\alpha} \coleq \Phi^{-1} (\varepsilon_\alpha e^{i\theta}\Omega_+)$ and $V'_{\theta,\alpha} \coleq \Phi^{-1} (\varepsilon_\alpha e^{i\theta}\Omega'_+)$.
Let $S_+ \coleq e^{-i\theta} S_\theta = \set{ \Re (\hbar^{-1}) > 1/\delta}$.
Recall that $\Phi^{-1} : \varepsilon_\alpha e^{i\theta} \Omega_+ \to V_{\theta,\alpha}$ is a local biholomorphism.
If it is many-to-one, let $\omega > 0$ be the trajectory period; i.e., the smallest positive real number such that $\Phi^{-1} (\varepsilon_\alpha e^{i\theta}\omega) = \Phi^{-1} (0)$.

\textsc{3. Transformation to the Riccati equation in standard form.}
Now we transform the Riccati equation \eqref{210304143646} into its standard form on $\Omega_+ \times S_+$ satisfying the hypotheses of \autoref{210421183535}.
We break this transformation down into a sequence of three steps.
It is convenient to define functions $p_\ast$ and $q_\ast$ by the following relations:
\eqntag{
	p = p_0 + p_1 \hbar + p_\ast \hbar^2
\qqtext{and}
	q = q_0 + q_1 \hbar + q_\ast \hbar^2
\fullstop
}
Thus, for example, the leading-order in $\hbar$ of $p_\ast, q_\ast$ is respectively $p_2, q_2$.

First, we change the unknown variable $s$ to $s_\ast$ given by the relation
\eqntag{
\label{210526065629}
	s = \lambda_\alpha + s_\alpha^\pto{1} \hbar + s_\ast \hbar^2
\fullstop
}
In other words, we want to derive an equation for the subleading-orders of the solution $s$ by removing the leading-order and the next-to-leading-order characteristic roots $\lambda_\alpha, s_\alpha^\pto{1}$.
Substituting these expressions into the Riccati equation \eqref{210304143646}, using the identity $\varepsilon_\alpha \sqrt{\DD_0} = 2\lambda_\alpha - p_0$ where $\varepsilon_\pm = \pm 1$, and eliminating the leading-order and the next-to-leading-order parts in $\hbar$ using identities \eqref{210415145506} and \eqref{210223183123}, we get:
\begin{multline*}
	\hbar^2 \del_x s_\ast - \varepsilon_\alpha \sqrt{\DD_0} \hbar s_\ast
\\	= \hbar \Big( \hbar^2 s_\ast^2 + (\hbar p_\ast + p_1 - 2s_\alpha^\pto{1}) \hbar s_\ast 
		+ \big( q_\ast - p_\ast (\lambda_\alpha + \hbar s_\alpha^\pto{1}) + (s_\alpha^\pto{1})^2 - \del_x s_\alpha^\pto{1} - p_1 s_\alpha^\pto{1}\big) \Big)
\fullstop
\end{multline*}
Next, we transform $s_\ast$ to $\TT$ via 
\eqntag{\label{210526070036}
	\hbar s_\ast = \varepsilon_\alpha \sqrt{\DD_0} \, \TT
\fullstop
}
Substituting and dividing through by $\DD_0$ leads to the following equation for $\TT$:
\eqntag{
	\varepsilon_\alpha \frac{\hbar}{\sqrt{\DD_0}} \del_x \TT
	- \TT
	= \hbar \Big( b_0 + \BB_0 + (b_1 + \BB_1) \TT + \TT^2 \Big)
\fullstop{,}
}
where
\eqntag{\label{210526124328}
\begin{aligned}
	\BB_0 &\coleq \frac{q_\ast - p_\ast (\lambda_\alpha + \hbar s_\alpha^\pto{1}) - q_2 + p_2 \lambda_\alpha}{\DD_0}
\qtext{and}
	\BB_1 \coleq \frac{\hbar p_\ast}{\varepsilon_\alpha \sqrt{\DD_0}},
\\
	b_0 &\coleq \frac{q_2 - p_2 \lambda_\alpha + (s_\alpha^\pto{1})^2 - \del_x s_\alpha^\pto{1} - p_1 s_\alpha^\pto{1}}{\DD_0}
\qtext{and}
	b_1 \coleq \frac{p_1 - 2s_\alpha^\pto{1} - \del_x \log \sqrt{\DD_0}}{\varepsilon_\alpha \sqrt{\DD_0}}.
\end{aligned}
}
Notice that $\BB_0$ and $\BB_1$ are both zero in the limit as $\hbar \to 0$, and furthermore an examination of \eqref{210525213123} reveals that
\eqn{
	b_0 = - \frac{\varepsilon_\alpha}{\sqrt{\DD_0}} s_\alpha^\pto{2}
\fullstop
}
Finally, we change the unknown variable $\TT = \TT (x, \hbar)$ to $f = f (z, \hbar)$ using the Liouville transformation $z (x) = \varepsilon_\alpha e^{-i \theta} \Phi (x)$ and a rotation in the $\hbar$-plane:
\eqntag{\label{210526072356}
	\TT (x, \hbar) = f \big( z(x) , e^{-i\theta}\hbar \big)
\fullstop
}
Then $f$ satisfies the Riccati equation in standard form \eqref{210418182218}, which is
\eqntag{
\label{210526124707}
	\hbar \del_z f - f = \hbar \big( \AA_0 + \AA_1 f + f^2\big)
\fullstop{,}
}
where the coefficients $\AA_1, \AA_2$ are given by 
\eqntag{
	\AA_k \big( z(x) , e^{-i\theta}\hbar \big)
		= b_k (x) + \BB_k (x, \hbar)
\fullstop
}
Combining all three transformations \eqref{210526065629}, \eqref{210526070036}, and \eqref{210526072356}, the total change of variables for $s$ to $f$ needed to take the original Riccati equation \eqref{210304143646} to the standard one \eqref{210418182218} is
\eqntag{\label{210526072953}
	s (x,\hbar) 
		= \lambda_\alpha (x) 
			+ \hbar \Big( s_\alpha^\pto{1} (x,\hbar) 
				+ \varepsilon_\alpha \sqrt{\DD_0 (x)} 
					f \big( z(x) , e^{-i\theta}\hbar \big) \Big)
\fullstop
}

\textsc{4. Finish the construction by applying the Main Technical Lemma.}
By taking $\epsilon$ sufficiently small, we can assume that conditions (1) and (2) in \autoref{210116200501} are satisfied uniformly for all $x \in V_{\theta,\alpha}$.
Then it is clear from expressions \eqref{210526124328} that assumptions (1) and (2) in \autoref{210116200501} imply that $\AA_0, \AA_1$ admit Gevrey asymptotics as $\hbar \to 0$ along $[-\pi/2,+\pi/2]$ uniformly for all $z \in \Omega_+$.
Therefore, by \autoref{210421183535}, there is some $\delta' \in (0, \delta]$ such that the Riccati equation \eqref{210526124707} has a unique holomorphic solution $f$ on $\Omega'_+ \times S'_+$ with uniform Gevrey asymptotics $f (z, \hbar) \simeq \hat{f} (z, \hbar)$ as $\hbar \to 0$ along $[-\pi/2, +\pi/2]$.
Finally, note that if $\Phi$ is $\omega$-periodic for some $\omega \in \Complex$, the so are the coefficients $\AA_i$, and therefore so is the solution $f$ by \autoref{210522171904}.
Consequently, formula \eqref{210526072953} yields a unique holomorphic solution $s_\alpha$ on $V'_{\theta,\alpha} \times S'_\theta$ which admits uniform Gevrey asymptotics $s_\alpha (z, \hbar) \simeq \hat{s}_\alpha (z, \hbar)$ as $\hbar \to 0$ along $\bar{A}_\theta = [\theta - \tfrac{\pi}{2}, \theta + \tfrac{\pi}{2}]$ with leading-order $\lambda_\alpha$.

\textsc{5. Linear independence.}
To prove linear independence, we compute the Wronskian of $\psi_+, \psi_-$ at the basepoint $x_0$ using formula \eqref{210218230927}:
\eqn{
	\WW \big( \psi_-, \psi_+ \big)
	= \det \mtx{ \psi_- & \psi_+  \\ \del_x \psi_- & \del_x \psi_+}
	= \det \mtx{ 1 & 1 \\ \frac{1}{\hbar} s_- & \frac{1}{\hbar} s_+ }
	= \frac{1}{\hbar} \big( s_+ - s_- \big)
\fullstop{,}
}
which is nonzero for $x = x_0$ since the leading-order part in $\hbar$ of $s_+ - s_-$ at $x_0$ is $\lambda_+ (x_0) - \lambda_- (x_0) = \sqrt{\DD_0 (x_0)} \neq 0$ by the assumption that $x_0$ is a regular point.
This competes the proofs of \autoref{210603110237} and \autoref{210116200501}.
\end{proof}
\end{appendices}

\begin{adjustwidth}{-2cm}{-1.5cm}
{\footnotesize
\bibliographystyle{nikolaev}
\bibliography{/Users/Nikita/Documents/Library/References}
}
\end{adjustwidth}
\end{document}